\begin{document}%
\title[Kloosterman paths of prime powers moduli]{Kloosterman paths of prime powers moduli}
\author[G. Ricotta]{Guillaume Ricotta}
\address{Universit\'{e} de Bordeaux \\
Institut de Math\'{e}matiques de Bordeaux \\
351, cours de la Lib\'{e}ration \\
33405 Talence cedex \\
France}
\email{Guillaume.Ricotta@math.u-bordeaux.fr}
\author[E. Royer]{Emmanuel Royer}
\address{Laboratoire de Math\'{e}matiques \\
Campus universitaire des C\'{e}zeaux \\
3 place Vasarely \\
TSA 60026 \\
CS 60026 \\
63178 Aubi\`{e}re Cedex \\
France}
\email{emmanuel.royer@math.univ-bpclermont.fr}
\date{Version of \today} 
\subjclass{11T23, 11L05, 60F17, 60G17, 60G50.}
\keywords{Kloosterman sums, moments, random Fourier series, probability in Banach spaces.}
\begin{abstract}
In \cite{KoSa}, the authors proved, using a deep independence result of Kloosterman sheaves, that the polygonal paths joining the partial sums of the normalized classical Kloosterman sums $S\left(a,b_0;p\right)/p^{1/2}$ converge in the sense of finite distributions to a specific random Fourier series, as $a$ varies over $\left(\mathbb{Z}/p\mathbb{Z}\right)^\times$, $b_0$ is fixed in $\left(\mathbb{Z}/p\mathbb{Z}\right)^\times$ and $p$ tends to infinity among the odd prime numbers. This article considers the case of $S\left(a,b_0;p^n\right)/p^{n/2}$, as $a$ varies over $\left(\mathbb{Z}/p^n\mathbb{Z}\right)^\times$, $b_0$ is fixed in $\left(\mathbb{Z}/p^n\mathbb{Z}\right)^\times$, $p$ tends to infinity among the odd prime numbers and $n\geq 2$ is a fixed integer. A convergence in law in the Banach space of complex-valued continuous function on $[0,1]$ is also established, as $(a,b)$ varies over $\left(\mathbb{Z}/p^n\mathbb{Z}\right)^\times\times\left(\mathbb{Z}/p^n\mathbb{Z}\right)^\times$, $p$ tends to infinity among the odd prime numbers and $n\geq 2$ is a fixed integer. This is the analogue of the result obtained in \cite{KoSa} in the prime moduli case.
\end{abstract}
\maketitle
\begin{center}
\textit{In memory of Kevin Henriot.}
\end{center}
%
\section{Introduction and statement of the results}%
%
%
The shape of the path induced by various partial exponential sums has been considered by many people since the seventies. See for instance \cite{MR0429787}, \cite{MR737174} for the case of Gau\ss{} sums, \cite{MR817102} for polynomial exponential sums of higher degree, \cite{MR3081779}, \cite{BoGoGaSo} and \cite{MR2276774} for the case of character sums. Very recently, E.~Kowalski and W.~Sawin successfully investigated the case of partial Kloosterman sums of prime moduli in \cite{KoSa}. The main purpose of this work is to consider the case of partial Kloosterman sums to prime power moduli and to give a probabilistic meaning to graphs like the one given in Figure \ref{fig_K2(p=61,n=2)}\footnote{The axes are orthonormal but a rotation by $\pi/2$ has been applied to the real plot of $t\mapsto\mathsf{Kl}_{67^2}(t;(1,1))$.}.
\begin{Fig}
\begin{center}
\includegraphics[scale=0.6,angle=90]{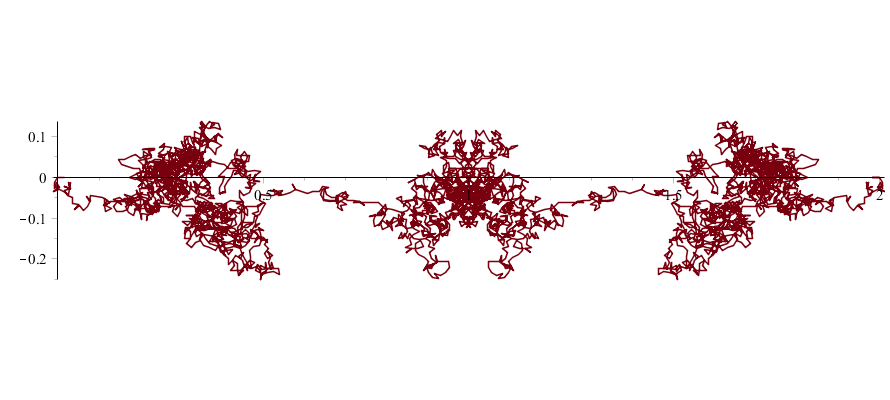}
\end{center}
\caption{Plot of $t\mapsto\mathsf{Kl}_{67^2}(t;(1,1))$}
\label{fig_K2(p=61,n=2)}
\end{Fig}
\par
More precisely, let $p$ be a prime number and $n\geq 1$ an integer. For $a$ and $b$ in $\mathbb{Z}/p^n\mathbb{Z}$, the corresponding normalized Kloosterman sum of modulus $p^n$ is the real number given by
\begin{equation*}
\mathsf{Kl}_{p^n}(a,b)\coloneqq\frac{1}{p^{n/2}}S\left(a,b;p^n\right)=\frac{1}{p^{n/2}}\sum_{\substack{1\leq x\leq p^n \\
p\nmid x}}\
e\left(\frac{ax+b\overline{x}}{p^n}\right)
\end{equation*}
where as usual $\overline{x}$ stands for the inverse of $x$ modulo $p^n$ and $e(z)\coloneqq\exp{(2i\pi z)}$ for any complex number $z$. For $a$ and $b$ in $\left(\mathbb{Z}/p^n\mathbb{Z}\right)^\times$, the associated partial sums are the $\varphi(p^n)=p^{n-1}(p-1)$ complex numbers
\begin{equation*}
\mathsf{Kl}_{j;p^n}(a,b)\coloneqq\frac{1}{p^{n/2}}\sum_{\substack{1\leq x\leq j \\
p\nmid x}}e\left(\frac{ax+b\overline{x}}{p^n}\right)
\end{equation*}
for $j$ in $J_p^n\coloneqq\left\{j\in\{1,\dots,p^n\}, p\nmid j\right\}$. If we write \(J_p^n=\{j_1,\dotsc,j_{\varphi(p^n)}\}\) with
\begin{equation*}
j_1<j_2<\dots<j_{\varphi(p^n)}
\end{equation*}
then the corresponding Kloosterman path $\gamma_{p^n}(a,b)$ is defined by %
\[%
\gamma_{p^n}(a,b)=\bigcup_{j=1}^{\varphi(p^n)-1}\left[\mathsf{Kl}_{j_i;p^n}(a,b),\mathsf{Kl}_{j_{i+1};p^n}(a,b)\right]. %
\]
This is %
the polygonal path obtained by concatenating the closed segments
\begin{equation*}
\left[\mathsf{Kl}_{j_1;p^n}(a,b),\mathsf{Kl}_{j_2;p^n}(a,b)\right]
\end{equation*}
for $j_1$ and $j_2$ two consecutive indices in $J_p^n$. Finally, one defines a continuous map on the interval $[0,1]$
\begin{equation*}
t\mapsto\mathsf{Kl}_{p^n}(t;(a,b))
\end{equation*}
by parametrizing the path $\gamma_{p^n}(a,b)$, each segment $\left[\mathsf{Kl}_{j_1;p^n}(a,b),\mathsf{Kl}_{j_2;p^n}(a,b)\right]$ for $j_1$ and $j_2$ two consecutive indices in $J_p^n$ being parametrized linearly by an interval of length $1/(\varphi(p^n)-1)$.
\par
For a fixed $b_0$ in $\left(\Z/p^n\Z\right)^\times$, the function $a\mapsto\mathsf{Kl}_{p^n}(\ast;(a,b_0))$ is viewed as a random variable on the probability space $\left(\mathbb{Z}/p^n\mathbb{Z}\right)^\times$ endowed with the uniform probability measure with values in the Banach space of complex-valued continuous functions on $[0,1]$ endowed with the supremum norm, say $C^0([0,1],\C)$.
\begin{remark}
In particular, with our definition, $\mathsf{Kl}_{p^n}(0;(a,b))$ is defined y
\begin{equation*}
\exists\lim_{t\to 0}\mathsf{Kl}_{p^n}(t;(a,b))=\frac{1}{p^{n/2}}e\left(\frac{a+b}{p^n}\right)=\mathsf{Kl}_{p^n}(0;(a,b)).
\end{equation*}
The Kloosterman path does not start at the origin, in contrast with \cite{KoSa}.
\end{remark}
Let $\mu$ be the probability measure given by
\begin{equation}\label{eq_def_mu_1}
\mu=\frac{1}{2}\delta_0+\mu_1
\end{equation}
for the Dirac measure $\delta_0$ at $0$ and
\begin{equation*}
\mu_1(f)=\frac{1}{2\pi}\int_{x=-2}^{2}\frac{f(x)\mathrm{d}x}{\sqrt{4-x^2}}
\end{equation*}
for any real-valued continuous function $f$ on $[-2,2]$.
\begin{theoint}[Convergence of finite distributions]\label{theo_A}
Let $n\geq 2$ be a fixed integer. 
For any odd prime number \(p\), fix an element \(b_0\) in $\left(\Z/p^n\Z\right)^\times$. %
Let $\left(U_h\right)_{h\in\Z}$ be a sequence of independent identically distributed random variables of probability law $\mu$ defined in \eqref{eq_def_mu_1} and let $\mathsf{Kl}$ be the $C^0([0,1],\C)$-valued random variable defined by
\begin{equation*}
\forall t\in[0,1],\quad \mathsf{Kl}(t)=tU_0+\sum_{h\in\Z^\ast}\frac{e(ht)-1}{2i\pi h}U_h.
\end{equation*}
The sequence of $C^0([0,1],\C)$-valued random variables $\mathsf{Kl}_{p^n}(\ast;(\ast,b_0))$ on $\left(\Z/p^n\Z\right)^\times$ converges in the sense of finite distributions\footnote{See appendix \ref{sec_proba} for a precise definition of the convergence in the sense of finite distributions.} to the $C^0([0,1],\C)$-valued random variable $\mathsf{Kl}$ as $p$ tends to infinity among the prime numbers.
\end{theoint}
\begin{remark}
We have chosen to parametrize the partial sums of the Kloosterman sums so that successive sums always correspond to adding one more term. This implies that partial sums at integers divisible by $p$ are not defined. Another definition would be to define $\mathsf{Kl}_{j;p^n}(a,b)$ for all integer $j$ and to interpolate in the usual way. The geometric path, namely the image of $t\mapsto\mathsf{Kl}_{p^n}(t;(a,b))$, would be unchanged and there is no doubt that the same results hold for this different definition.
\end{remark}
\begin{remark}
All the main properties of the random variable $\mathsf{Kl}$ are given in Proposition \ref{propo_kahane}. As already said, this theorem is the analogue of the result proved by E.~Kowalski and W.~Sawin in \cite{KoSa} when $n=1$ for a different random Fourier series given by
\begin{equation*}
\forall t\in[0,1],\quad K(t)\coloneqq t\mathsf{ST}_0+\sum_{h\in\Z^\ast}\frac{e(ht)-1}{2i\pi h}\mathsf{ST}_h
\end{equation*}
where $\left(\mathsf{ST}_h\right)_{h\in\Z}$ is an independent identically distributed sequence of random variables of probability law $\mu_{ST}$, the classical Sato-Tate measure also called the semi-circle law. The fact that $K$ and $\mathsf{Kl}$ have the same analytic shape heavily depends on the completion method. The fact that $K$ and $\mathsf{Kl}$ are different on a probabilistic point of view is not very surprising since $\mathsf{Kl}_{p^n}(a,b)$ is a sum over a finite field when $n=1$, which requires deep techniques from algebraic geometry, and a character sum when $n\geq 2$, which can be computed explicitly via elementary but not so easy techniques. Thus, the fact that Kloosterman paths of prime moduli and of prime powers moduli behave differently on a probabilistic point of view is quite expected.
\end{remark}
\begin{remark}
Nevertheless, the referee kindly informed us that both this measure $\mu$ and the random series $\mathsf{Kl}$ occur when dealing with the path induced by Sali\'{e} sums of prime moduli. In addition, let us recall that $\mu_{ST}$ is the direct image under the trace map of the probability Haar measure on the compact group $SU_2(\C)$ whereas, according to \cite[Remark 1.2]{MR2646758}, $\mu$ is the direct image under the trace map of the probability Haar measure on the normalizer of a maximal torus in $SU_2(\C)$.
\end{remark}
\begin{remark}\label{remark_111}
In particular, choosing $t=1$, Theorem \ref{theo_A} implies that the normalized Kloosterman sums $\mathsf{Kl}_{p^n}(a,b_0)$ get equidistributed in $[-2,2]$ with respect to the measure $\mu$, as $a$ ranges over $\left(\Z/p^n\Z\right)^\times$ and $p$ tends to infinity among the odd prime numbers for a fixed integer $n\geq 2$ and $b_0$ is a fixed element in $\left(\Z/p^n\Z\right)^\times$.
\end{remark}
\begin{remark}
It is worth mentioning that the proof of this theorem requires A.~Weil's version of the Riemann hypothesis in one variable. See Proposition \ref{propo_cardinality}.
\end{remark}
The function $(a,b)\mapsto\mathsf{Kl}_{p^n}(t;(a,b))$ is viewed as a $C^0([0,1],\C)$-valued random variable on the probability space $\left(\mathbb{Z}/p^n\mathbb{Z}\right)^\times\times\left(\mathbb{Z}/p^n\mathbb{Z}\right)^\times$ endowed with the uniform probability measure. Theorem \ref{theo_A} trivially implies that the sequence of $C^0([0,1],\C)$-valued random variables $\mathsf{Kl}_{p^n}(\ast;(\ast,\ast))$ converges in the sense of finite distributions to the $C^0([0,1],\C)$-valued random variable $\mathsf{Kl}$ as $p$ tends to infinity among the prime numbers too.
\begin{theoint}[Convergence in law]\label{theo_B}
Let $n\geq 2$ be a fixed integer and $p$ be an odd prime number. The sequence of $C^0([0,1],\C)$-valued random variables $\mathsf{Kl}_{p^n}(\ast;(\ast,\ast))$ on $\left(\Z/p^n\Z\right)^\times\times\left(\Z/p^n\Z\right)^\times$ converges in law\footnote{See appendix \ref{sec_proba} for a precise definition of the convergence in law in the Banach space $C^0([0,1],\C)$.} to the $C^0([0,1],\C)$-valued random variable $\mathsf{Kl}$ as $p$ tends to infinity among the prime numbers.
\end{theoint}
\begin{remark}
Once again, this theorem is the analogue of the result proved by E.~Kowalski and W.~Sawin in \cite{KoSa} when $n=1$.
\end{remark}
\begin{remark}
For a fixed $n\geq 2$ and a fixed $b_0$ in $\left(\Z/p^n\Z\right)^\times$, we expect that the sequence of $C^0([0,1],\C)$-valued random variables $\mathsf{Kl}_{p^n}(\ast;(\ast,b_0))$ on $\left(\Z/p^n\Z\right)^\times$ converges in law to the $C^0([0,1],\C)$-valued random variable $\mathsf{Kl}$ as $p$ tends to infinity among the prime numbers too. Nevertheless, such result seems to be out of reach given the current technology. It relies on expected uniform non-trivial individual bounds for incomplete Kloosterman sums
\begin{equation*}
\frac{1}{p^{n/2}}\sum_{x\in I}e\left(\frac{ax+b_0\overline{x}}{p^n}\right)\ll p^{-\delta}
\end{equation*}
for some $\delta>0$ and where $I$ is an interval of $\left(\Z/p^n\Z\right)^\times$ of length close to $p^{n/2}$. See \cite[Remark 3.3]{KoSa} and \cite[Page 52]{Ko} for a discussion on such issues in the prime moduli case.
\end{remark}
\par
In \cite{KoSa}, the authors deduce from their limit theorems the distribution of the maximum of the partial sums of prime moduli they consider. Their techniques would lead to a straightforward analogue in the case of prime powers moduli investigated in this work.
\par
One can mention that it seems quite natural to consider the same questions in the regime\footnote{Or even worse any intermediate regime.} $p$ a fixed prime number and $n\geq 2$ tends to infinity. This problem, both theoretically and numerically, seems to be of completely different nature. 
\par
Finally, it makes sense to consider the distribution of paths associated to other exponential sums of prime powers moduli and to ask whether a distribution result remains true. For instance, one could be tempted to look at
\begin{equation*}
\mathsf{K}_{p^n}(a)=\frac{1}{p^{n/2}}\sum_{1\leq x\leq p^n}^{\ast}e\left(\frac{f_a(x)}{p^n}\right)
\end{equation*}
where $f_a=g_a/h_a$ with $g_a$ and $h_a$ in $\mathbb{Z}[x]$ depending on a parameter $a$ modulo $p^n$. The symbol $\ast$ means that the summation is over the elements $x$ satisfying $p\nmid h_a(x)$. These exponential sums can be computed explicitly. See \cite[Lemma 12.2, Lemma 12.13]{IwKo} for instance. One key step would be to evaluate asymptotically  
\begin{equation*}
\frac{1}{\varphi\left(p^n\right)}\sum_{a\in\left(\mathbb{Z}/p^n\mathbb{Z}\right)^\times}\prod_{\tau\in\mathbb{Z}/p^n\mathbb{Z}}\mathsf{K}_{p^n}(a+\tau)^{\mu(\tau)}
\end{equation*}
for $\uple{\mu}=\left(\mu(\tau)\right)_{\tau\in\mathbb{Z}/p^n\mathbb{Z}}$ a $p^n$-tuple of non-negative integers. 
\par
\noindent{\textbf{Organization of the paper. }}%
The explicit description of the Kloosterman paths is given in Section \ref{sec_path}. The relevant random Fourier series, which occurs as an asymptotic process in Theorem \ref{theo_A} and Theorem \ref{theo_B}, is defined and studied in Section \ref{sec_process}. Section \ref{sec_moments} contains the asymptotic evaluation of the moments of the random variable $\mathsf{Kl}_{p^n}(\ast;(\ast,\ast))$ whereas the tightness of this sequence of random variables is established in Section \ref{sec_tight}. The proofs of Theorem \ref{theo_A} and Theorem \ref{theo_B} are completed in Section \ref{sec_final}. A probabilistic toolbox is provided in appendix \ref{sec_proba}.
\begin{notations}
The main parameter in this paper is an odd prime $p$, which tends to infinity. Thus, if $f$ and $g$ are some $\C$-valued function of the real variable then the notations $f(p)=O_A(g(p))$ or $f(p)\ll_A g(p)$ mean that $\abs{f(p)}$ is smaller than a "constant", which only depends on $A$, times $g(p)$ at least for $p$ large enough.
\par
$n\geq 2$ is a fixed integer.
\par
For any real number $x$ and integer $k$, $e_k(x)\coloneqq\exp{\left(\frac{2i\pi x}{k}\right)}$.
\par
For any finite set $S$, $\vert S\vert$ stands for its cardinality. 
\par
We will denote by $\epsilon$ an absolute positive constant whose definition may change from one line to the next one.
\par
The notation $\sum^{\times}$ means that the summation is over a set of integers coprime with $p$.
\par
Finally, if $\mathcal{P}$ is a property then $\delta_{\mathcal{P}}$ is the Kronecker symbol, namely $1$ if $\mathcal{P}$ is satisfied and $0$ otherwise.
\end{notations}
\begin{merci}%
\par
The authors would like to thank the referee for her or his unusual careful reading of the manuscript and very useful suggestions that improved the presentation of the paper.
\par
The authors would like to thank E.~Kowalski for his encouragements and for sharing with us his enlightening lectures notes \cite{Ko}. They also thank F.~Martin for fruitful discussion related to Proposition \ref{propo_tricky_counting}.
\par
Part of this paper was worked out in Universit\'{e} Blaise Pascal (Clermont-Ferrand, France) in June 2016. The first author would like to thank this institution for its hospitality and inspiring working conditions. 
\end{merci}
\section{Explicit description of the Kloosterman path}\label{sec_path}%

Let us construct the Kloosterman path $\gamma(a,b)$ for $a$ and $b$ in $\left(\mathbb{Z}/p^n\mathbb{Z}\right)^\times$. %

We enumerate the partial Kloosterman sums and define \(z_j((a,b);p^n)\) to be the \(j\)-th term of \(\left(\mathsf{Kl}_{j;p^n}(a,b)\right)_{j\in J_p^n}\). More explicitly, we organise the partial Kloosterman sums in \(p^{n-1}\) blocks each of them containing \(p-1\) successive sums. For $1\leq k\leq p^{n-1}$, the \(k\)-th block contains \(\mathsf{Kl}_{(k-1)p+1;p^n}(a,b),\dotsc,\mathsf{Kl}_{kp-1;p^n}(a,b)\). These sums are numbered by defining %
\[%
z_{(k-1)(p-1)+\ell}((a,b);p^n)=\mathsf{Kl}_{(k-1)p+\ell;p^n}(a,b)\qquad (1\leq \ell\leq p-1). %
\]
It implies that the enumeration is given by %
\begin{equation}\label{eq_etoile}%
z_{j}((a,b);p^n)=\mathsf{Kl}_{j+\left\lfloor\frac{j-1}{p-1}\right\rfloor;p^n}(a,b)\qquad (1\leq j<\varphi(p^n))
\end{equation}

For any \(j\in\{1,\dotsc,\varphi(p^n)-1\}\), we parametrize the segment \[\left]z_{j}((a,b);p^n),z_{j+1}((a,b);p^n)\right]\] and obtain the parametrization of \(\gamma_{p^n}(a,b)\) given by %
\[%
\forall t\in[0,1],\, \mathsf{Kl}_{p^n}(t;(a,b))=\alpha_j((a,b);p^n)\left(t-\frac{j-1}{\varphi(p^n)-1}\right)+z_{j}((a,b);p^n) %
\]
with %
\[%
\alpha_j((a,b);p^n)=(\varphi(p^n)-1)\left(z_{j+1}((a,b);p^n)-z_{j}((a,b);p^n)\right) %
\]
and 
\[%
j=\left\lceil\left(\varphi(p^n)-1\right)t\right\rceil. %
\]
Since \(\left]z_{j}((a,b);p^n),z_{j+1}((a,b);p^n)\right]\) has length \(p^{-n/2}\), we have %
\begin{equation}\label{eq_tec_0}
\abs{\alpha_j((a,b);p^n)}\leq\frac{\varphi(p^n)-1}{p^{n/2}}
\end{equation}
and %
\begin{equation}\label{eq_tec}
\left\vert\mathsf{Kl}_{p^n}(t;(a,b))-z_{j}((a,b);p^n)\right\vert\leq\frac{1}{p^{n/2}}.
\end{equation}
\section{On the relevant random Fourier series}\label{sec_process}%
The moments of the measure $\mu$ defined in \eqref{eq_def_mu_1} are given by
\begin{equation}\label{eq_mu_1}
\int_{x\in\R}x^m\mathrm{d}\mu(x)=\begin{cases}
1 & \text{if $m=0$,} \\
\frac{\delta_{2\mid m}}{2}\binom{m}{m/2} & \text{otherwise.}
\end{cases}
\end{equation}
\par
Let $U$ be a random variable of law $\mu$ on a probability space $\left(\Omega,\mathcal{A},P\right)$. By \eqref{eq_mu_1}, the value of the expectation of such random variable is $0$ and its variance equals $1$. In addition, $\mu$ is also the law of the random variable $-U$ since the probability measure $\mu$ is symmetric.
\par
Let $\left(U_h\right)_{h\in\Z}$ be a sequence of independent random variables of law $\mu$ on a probability space $\left(\Omega,\mathcal{A},P\right)$. One defines for $t$ in $[0,1]$ the symmetric partial sums
\begin{equation*}
\mathsf{Kl}_H(t;\omega)\coloneqq tU_0(\omega)+\sum_{1\leq\abs{h}\leq H}\frac{e(ht)-1}{2i\pi h}U_h(\omega)
\end{equation*}
for any integer $H\geq 1$ and any $\omega\in\Omega$. %
Let \(t\in[0,1]\) and $\omega\in\Omega$. If \(\mathsf{Kl}_H(t;\omega)\) has a limit when \(H\) tends to infinity, we denote by \(\mathsf{Kl}(t;\omega)\) this limit, namely  %
\begin{equation*}
\mathsf{Kl}(t;\omega)\coloneqq tU_0(\omega)+\sum_{h\in\Z^\ast}\frac{e(ht)-1}{2i\pi h}U_h(\omega)
\end{equation*}
It turns out that $\mathsf{Kl}(t;\omega)$ is closely related to the set of Fourier random series, which have been intensively studied in \cite{MR0254888}.
\begin{proposition}[Properties of the random series]\label{propo_kahane}
The following properties hold.
\begin{itemize}
\item
For any $t$ in $[0,1]$, the random series $\mathsf{Kl}(t;\ast)$ converges almost surely, hence in law.
\item
For almost all $\omega\in\Omega$, the random series $\mathsf{Kl}(\ast;\omega)$ is a continuous function on $[0,1]$.
\item
For any $t$ in $[0,1]$, the Laplace transform
\begin{equation*}
\mathbb{E}\left(e^{\lambda\Re{\left(\mathsf{Kl}(t;\ast)\right)}+\mu\Im{\left(\mathsf{Kl}(t;\ast)\right)}}\right)
\end{equation*}
is well-defined for all non-negative integers $\lambda$ and $\mu$. In particular, $\mathsf{Kl}(\ast;\omega)$ has moments of all orders.
\item
Finally, for any $t$ in $[0,1]$,
\begin{equation}\label{eq_supnorm}
\abs{\abs{\mathsf{Kl}_H(t;\ast)}}_\infty\ll\log{(H)}
\end{equation}
and
\begin{equation}\label{eq_L1}
\left\vert\mathbb{E}\left(\left\vert\mathsf{Kl}(t;\ast)-\mathsf{Kl}_H(t;\ast)\right\vert\right)\right\vert\ll H^{-1/2}
\end{equation}
for any $H\geq 1$.
\end{itemize}
\end{proposition}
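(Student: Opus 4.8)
The plan is to recognise $\mathsf{Kl}(t;\ast)$ as a random Fourier series whose coefficients are independent, identically distributed, \emph{bounded} (the support of $\mu$ lies in $[-2,2]$, so $\abs{U_h}\leq 2$ almost surely), centered and of variance one by \eqref{eq_mu_1}, and to feed it into the classical theory of such series developed in \cite{MR0254888}. Write $c_0(t)=t$ and $c_h(t)=\frac{e(ht)-1}{2i\pi h}$ for $h\neq 0$. Everything rests on the elementary estimate
\[
\abs{c_h(t)}=\frac{\abs{\sin(\pi h t)}}{\pi\abs{h}}\leq\min\left(t,\frac{1}{\pi\abs{h}}\right)\qquad(h\neq 0),
\]
which gives the uniform bound $\sum_{h\in\Z}\abs{c_h(t)}^2<\infty$ for $t\in[0,1]$.

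For the first item I would invoke Kolmogorov's theorem on almost sure convergence of series of independent centered summands, applied separately to the sub-series over $h>0$ and over $h<0$: since $\sum_h\abs{c_h(t)}^2\,\mathbb{E}(U_h^2)=\sum_h\abs{c_h(t)}^2<\infty$, each converges almost surely, hence so does $\mathsf{Kl}_H(t;\ast)$, and almost sure convergence trivially implies convergence in law. For the continuity statement I would write
\[
\mathsf{Kl}(t;\omega)=tU_0(\omega)+\sum_{h\neq 0}\frac{e(ht)}{2i\pi h}U_h(\omega)-\sum_{h\neq 0}\frac{1}{2i\pi h}U_h(\omega):
\]
the last series is an almost surely convergent constant (again by the first item), the term $tU_0(\omega)$ is obviously continuous, and the middle term is a random trigonometric series with coefficients of size $1/\abs{h}$. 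Dyadic blocking gives $\sum_{k\geq 0}\bigl(\sum_{2^k\leq\abs{h}<2^{k+1}}\abs{c_h(t)}^2\bigr)^{1/2}\ll\sum_{k\geq 0}2^{-k/2}<\infty$, so the Salem--Zygmund--Kahane criterion for subgaussian random Fourier series applies and yields almost sure uniform convergence to a continuous function on $[0,1]$; combining the three pieces on a common full-measure event gives the continuity of $\mathsf{Kl}(\ast;\omega)$.

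For the Laplace transform I would linearise: with $w=\lambda-i\mu$ one has $\lambda\Re(z)+\mu\Im(z)=\Re(wz)$, so $\lambda\Re(\mathsf{Kl}(t))+\mu\Im(\mathsf{Kl}(t))=\sum_h d_h U_h$ with real $d_h=\Re\bigl(w c_h(t)\bigr)$ and $\sum_h d_h^2\leq\abs{w}^2\sum_h\abs{c_h(t)}^2<\infty$. Hoeffding's lemma (valid since $\abs{U_h}\leq 2$ and $\mathbb{E}(U_h)=0$) gives $\mathbb{E}(e^{sU_h})\leq e^{2s^2}$, so every partial Laplace transform $\mathbb{E}\bigl(e^{\sum_{\abs{h}\leq H}d_h U_h}\bigr)=\prod_{\abs{h}\leq H}\mathbb{E}(e^{d_h U_h})$ is bounded by $e^{2\sum_h d_h^2}$; Fatou's lemma then yields the finiteness of $\mathbb{E}\bigl(e^{\lambda\Re(\mathsf{Kl}(t;\ast))+\mu\Im(\mathsf{Kl}(t;\ast))}\bigr)$, indeed for all real $\lambda,\mu$. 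Running this with the four sign choices of $(\lambda,\mu)$ makes $e^{\pm\Re\mathsf{Kl}(t)}$ and $e^{\pm\Im\mathsf{Kl}(t)}$ integrable, so $\mathsf{Kl}(t;\ast)$ has moments of every order. Finally, \eqref{eq_supnorm} is the pointwise bound $\abs{\mathsf{Kl}_H(t;\omega)}\leq 2t+2\sum_{1\leq\abs{h}\leq H}\abs{c_h(t)}\leq 2+\frac{4}{\pi}\sum_{1\leq h\leq H}\tfrac1h\ll\log H$, uniform in $t$ and $\omega$, coming from $\abs{U_h}\leq 2$; and \eqref{eq_L1} follows from the Cauchy--Schwarz inequality and the orthogonality relations $\mathbb{E}(U_h\overline{U_k})=\delta_{h=k}$, since $\mathsf{Kl}(t)-\mathsf{Kl}_H(t)=\sum_{\abs{h}>H}c_h(t)U_h$ gives $\mathbb{E}\bigl(\abs{\mathsf{Kl}(t;\ast)-\mathsf{Kl}_H(t;\ast)}\bigr)\leq\bigl(\sum_{\abs{h}>H}\abs{c_h(t)}^2\bigr)^{1/2}\ll H^{-1/2}$.

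The only genuinely non-routine ingredient is the almost sure continuity of the random Fourier series; the rest is bookkeeping around the square-summability of $(c_h(t))_h$ and the boundedness of the $U_h$. For that step I would either quote the pertinent theorem of Kahane directly — the coefficients $1/\abs{h}$ satisfy the dyadic entropy hypothesis with room to spare — or, for a self-contained treatment, run the standard chaining argument for the subgaussian process $t\mapsto\sum_{h\neq 0}\frac{e(ht)}{2i\pi h}U_h$, whose increments obey $\mathbb{E}\left|\sum_{h\neq 0}(e(ht)-e(hs))\frac{U_h}{2i\pi h}\right|^2\ll\abs{t-s}$, so that Dudley's entropy integral for the metric $\abs{t-s}^{1/2}$ converges and continuity of a modification follows.
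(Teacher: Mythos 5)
Your proof is correct and follows essentially the same route as the argument the paper relies on: the proof of Proposition \ref{propo_kahane} is omitted in the text and delegated to \cite[Proposition 2.1]{KoSa} and \cite[Section 4]{Ko}, which run exactly this Kahane-style treatment of a random Fourier series with bounded, centered, symmetric i.i.d. coefficients (Kolmogorov's convergence theorem for the pointwise statement, the Salem--Zygmund/Kahane dyadic criterion for almost sure uniform convergence and continuity, subgaussian bounds for the Laplace transform, and the trivial $\ell^1$ and $L^2$ tail estimates for \eqref{eq_supnorm} and \eqref{eq_L1}). Your write-up simply makes explicit the details the paper leaves to those references.
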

\begin{remark}
In particular, the map
\begin{equation*}
\begin{array}{cccl}
\mathsf{Kl}: & \left(\Omega,\mathcal{A},P\right) & \to & \left(C^0\left([0,1],\C\right),\abs{\abs{.}}_\infty\right) \\
& \omega & \mapsto & \begin{array}{cccl}
\mathsf{Kl}(\ast;\omega): & [0,1] & \to & \C \\
& t & \mapsto & \mathsf{Kl}(t;\omega)
\end{array}
\end{array}
\end{equation*}
defines a random variable on the probability space $\left(\Omega,\mathcal{A},P\right)$ with values in the Banach space of continuous complex-valued functions on the segment $[0,1]$ endowed with the supremum norm $\abs{\abs{.}}_\infty$.
\end{remark}
\begin{remark}
The proof is omitted since it is very close to the proof of \cite[Proposition 2.1]{KoSa}. The reader may have a look at \cite[Section 4]{Ko} too.
\end{remark}
\section{Asymptotics of complex moments}\label{sec_moments}%
In this section, $b_0$ is a \emph{fixed} element in $\left(\Z/p^n\Z\right)^\times$. Let $k\geq 1$ be a fixed integer, $\uple{t}=(t_1,\dots,t_k)$ be a fixed $k$-tuple of elements in $[0,1]$ with $t_1<\dots<t_k$, $\uple{n}=(n_1,\dots,n_k)$ and $\uple{m}=(m_1,\dots,m_k)$ be two fixed $k$-tuples of non-negative integers. Let us define
\begin{equation*}
\ell(\uple{m+n})\coloneqq\sum_{i=1}^k\left(m_i+n_i\right).
\end{equation*}
\par
The purpose of this section is to find an asymptotic formula for the complex moments defined by
\begin{equation}\label{eq_moments_bis}
\mathsf{M}_{p^n}(\uple{t};\uple{m},\uple{n};b_0)\coloneqq\frac{1}{\varphi(p^n)}\sum_{a\in\left(\mathbb{Z}/p^n\mathbb{Z}\right)^\times}\prod_{i=1}^k\overline{\mathsf{Kl}_{p^n}(t_i;(a,b_0))}^{m_i}\mathsf{Kl}_{p^n}(t_i;(a,b_0))^{n_i}.
\end{equation}
The following proposition describes the asymptotic expansion of these moments. Its proof will be given at the very end of this section since it requires a series of intermediate results.
\begin{proposition}[Asymptotic expansion of the moments]\label{propo_moment_tilde}
If 
\begin{equation}
p>\max{\left(\ell(\uple{m}+\uple{n}),2n-5\right)}
\end{equation}
then
\begin{multline*}
\mathsf{M}_{p^n}(\uple{t};\uple{m},\uple{n},b_0)=\mathbb{E}\left(\prod_{i=1}^{k}\overline{\mathsf{Kl}(t_i;\ast)}^{m_i}\mathsf{Kl}(t_i;\ast)^{n_i}\right) \\
+O_{\ell(\uple{m}+\uple{n}),\epsilon}\left(\log^{\ell(\uple{m}+\uple{n})}\left(p^n\right)\left(p^{-\frac{4(n-1)}{2^n}+\epsilon}+p^{-1/2}\right)\right)
\end{multline*}
for any $\epsilon>0$ and where the implied constant only depends on $\ell(\uple{m}+\uple{n})$ and $\epsilon$.
\end{proposition}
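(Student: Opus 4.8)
## Proof plan for Proposition \ref{propo_moment_tilde}

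The strategy is to reduce the moments of the polygonal path $\mathsf{Kl}_{p^n}(t_i;(\ast,b_0))$ to the moments of the random Fourier series $\mathsf{Kl}$ by way of two successive approximations: first replace the path value at $t_i$ by the partial sum $z_{j_i}((\ast,b_0);p^n)$ closest to it (controlled by \eqref{eq_tec}), and then replace the ``geometric'' partial sum $z_{j_i}$ by a suitable symmetric partial sum $\mathsf{Kl}_{H_i}$ with $H_i \asymp p^{\delta}$, an arithmetic cut-off to be chosen near the end. After this double truncation, the moment in \eqref{eq_moments_bis} becomes, up to an acceptable error, a finite linear combination of sums of the form
\[
\frac{1}{\varphi(p^n)}\sum_{a\in(\Z/p^n\Z)^\times}\ \prod_{|h|\le H}\ \left(\frac{\overline{\mathsf{Kl}_{p^n}(a h + b_0\overline{ah};\dots)}}{\dots}\right)^{\nu(h)}
\]
— i.e., products of completed normalized Kloosterman-type sums with shifted arguments raised to integer powers. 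The key arithmetic input is then an evaluation of such averages: this is where Proposition \ref{propo_cardinality} (A.~Weil's Riemann Hypothesis for Kloosterman sums over $\Z/p^n\Z$) and the ``tricky counting'' of Proposition \ref{propo_tricky_counting} enter, producing a main term that matches a Gaussian-type moment and an error of size $p^{-1/2}$ times logarithmic factors.

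Concretely, the steps I would carry out in order are: (1) Use \eqref{eq_tec} to pass from $\mathsf{Kl}_{p^n}(t_i;(a,b_0))$ to $z_{j_i((a,b_0);p^n)}$; since each path value is bounded (after completion) by $O(\log p^n)$, expanding the product costs $O_{\ell}(\log^{\ell}(p^n)\cdot p^{-n/2})$, absorbed in the $p^{-1/2}$ term. (2) Write $z_{j_i}$ via its Fourier/completion expansion as $t_i U_0^{(a)} + \sum_{h\neq 0}\frac{e(ht_i)-1}{2i\pi h} U_h^{(a)} + (\text{tail})$, where $U_h^{(a)}$ is (a normalization of) the Kloosterman sum $\mathsf{Kl}_{p^n}(ah+\cdots)$; truncate the Fourier expansion at $|h|\le H$ with the $L^1$-type tail bound \eqref{eq_L1}'s arithmetic analogue, contributing $O(H^{-1/2})$ per factor. (3) Expand the resulting finite product and swap the $a$-average inside; one is left with finitely many ``mixed moments'' of the $U_h^{(a)}$. (4) Evaluate each mixed moment using the arithmetic counting results: the dominant contribution comes from the pairing/matching configurations and reproduces exactly $\mathbb{E}(\prod_i \overline{\mathsf{Kl}(t_i;\ast)}^{m_i}\mathsf{Kl}(t_i;\ast)^{n_i})$ via the moment formula \eqref{eq_mu_1} for $\mu$ and independence of the $U_h$; the off-diagonal terms are $O(p^{-1/2})$ by Weil. (5) Optimize: balancing the Fourier-truncation error $H^{-1/2}$ against the arithmetic error (which deteriorates as $H$ grows, the loss being governed by the $2^n$ in the exponent coming from iterated ``unfolding'' of the Kloosterman sum modulo $p^n$) yields the stated $p^{-4(n-1)/2^n+\epsilon} + p^{-1/2}$.

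The congruence condition $p > \max(\ell(\uple{m}+\uple{n}), 2n-5)$ is exactly what makes the diagonal combinatorics rigid: when $p$ exceeds the total degree, no spurious coincidences among the shifted arguments $ah$ modulo $p$ occur, so the matching structure over $\Z/p^n\Z$ is forced to agree with the independence structure of the limiting random variables, and the elementary evaluation of the character sums (as in \cite[Lemma 12.2]{IwKo}) produces clean main terms. I expect the main obstacle to be Step (4): controlling the off-diagonal mixed moments of the $U_h^{(a)}$ uniformly in the shifts and in $H$, which requires a careful application of the explicit evaluation of Kloosterman sums to prime power moduli (splitting into the ``square argument'' and ``non-square argument'' cases) together with the stationary-phase-type counting of Proposition \ref{propo_tricky_counting}; keeping the error term's dependence on $H$ explicit enough to run the final optimization is the delicate part, and is precisely why the exponent $-4(n-1)/2^n$ rather than a cleaner power appears.
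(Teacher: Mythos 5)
Your overall skeleton (pass from the path to nearby partial sums, complete, expand, reduce to averages of products of shifted complete Kloosterman sums, then invoke the counting results) is the same as the paper's, but two of your key mechanisms are wrong and leave genuine gaps. First, the truncation of the completion expansion at $|h|\le H\asymp p^{\delta}$ with an ``$O(H^{-1/2})$ per factor'' tail bound has no justification on the arithmetic side: with only $|\mathsf{Kl}_{p^n}(a-h,b_0)|\le 2$ and $|\alpha_{p^n}(h;t)|\le p^{n/2}/(2|h|)$, the discarded range $H<|h|\le (p^n-1)/2$ contributes $\ll\log(p^n/H)$, not $H^{-1/2}$; the bound \eqref{eq_L1} is a statement about the random series (where independence gives square-root cancellation), not about the arithmetic partial sums. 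The paper avoids this entirely: it keeps the complete expansion over all of $H_{p^n}$, proves an asymptotic for the complete shifted moments $\mathsf{S}_{p^n}(\uple{\mu};b_0)$ uniformly in the shifts (Proposition \ref{propo_shifted_Kl}), and only at the very end compares $\mathsf{Kl}_{(p^n-1)/2}$ with $\mathsf{Kl}$ using \eqref{eq_supnorm} and \eqref{eq_L1}, which is legitimate because that step is purely probabilistic. Consequently there is no optimization over $H$, and your explanation of the exponent $-4(n-1)/2^n$ as the outcome of balancing a truncation error against an arithmetic error is not how it arises: it comes from Weyl differencing applied to the exponential sum $\sum_{u\bmod p^{n-1}}e_{p^n}\left(P_{\uple{b}}(u)\right)$, where $P_{\uple{b}}$ is the polynomial of degree at most $n-1$ produced by the $p$-adic expansion \eqref{eq_sr} of the square root $s_{a+\tau,p^n}$ in the explicit formula for $\mathsf{Kl}_{p^n}(a+\tau,1)$ (Lemma \ref{lemma_kppm}); the Weyl saving $N^{2^{1-j}}$ with $N=p^{n-1}$ and $j\le n-1$ gives exactly $p^{-4(n-1)/2^n+\epsilon}$.

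Second, the main-term mechanism is misidentified. The limit moments are not recovered from a Gaussian-type pairing or diagonal count: since $\mathsf{Kl}_{p^n}(a+\tau,1)=2\left(\frac{s}{p^n}\right)\cos\left(4\pi s/p^n+\theta_{p^n}\right)$ when $a+\tau$ is a square mod $p$ and vanishes otherwise, one expands powers of cosines (a Chebyshev-type linearization); the main term consists of the constant terms, which exist only for even multiplicities and carry the factor $\binom{\mu(\tau)}{\mu(\tau)/2}$, while the factor $\tfrac12$ per shift comes from the density of $a$ with all $a+\tau$ quadratic residues, i.e. from Proposition \ref{propo_cardinality}. That Davenport-type character-sum estimate (Weil's RH for curves, not an ``RH for Kloosterman sums'') is the source of the $p^{-1/2}$ in the error, not bounds on off-diagonal mixed moments. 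Likewise, the condition $p>\ell(\uple{m}+\uple{n})$ is not about forbidding coincidences among the shifts modulo $p$ (tuples with such coincidences are simply removed via Lemma \ref{lemma_noncoprim} at an acceptable cost and re-added later); it guarantees that the integers $\ell_\tau$ with $|\ell_\tau|\le\mu(\tau)$ arising in the cosine expansion are nonzero modulo $p$, which is what makes the counting bound of Proposition \ref{propo_tricky_counting} applicable, while $p>2n-5$ ensures $p\nmid c_m'$ in \eqref{eq_vp_cm'}. Without the explicit-evaluation and Weyl-differencing machinery of Proposition \ref{propo_shifted_Kl}, steps (4)--(5) of your plan cannot be carried out as described.
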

\par
For $a$ in $\left(\mathbb{Z}/p^n\mathbb{Z}\right)^\times$, let us define a step function on the segment $[0,1]$ by, for any $k\in\left\{1,\dots,p^{n-1}\right\}$,
\begin{equation}\label{eq_Kltilde}
\forall t\in\left(\frac{k-1}{p^{n-1}},\frac{k}{p^{n-1}}\right],\quad\widetilde{\mathsf{Kl}_{p^n}}(t;(a,b_0))\coloneqq\frac{1}{p^{n/2}}\sum_{1\leq x\leq x_k(t)}^{\mathstrut\quad\times}e_{p^n}\left(ax+b_0\overline{x}\right).
\end{equation}
where
\begin{equation*}
x_k(t)\coloneqq\varphi(p^n)t+k-1.
\end{equation*}
In addition, let us define for $h$ in $\mathbb{Z}/p^n\mathbb{Z}$ and $1\leq k\leq p^{n-1}$,
\begin{equation}\label{eq_Fourier}
\forall t\in\left(\frac{k-1}{p^{n-1}},\frac{k}{p^{n-1}}\right],\quad\alpha_{p^n}(h;t)\coloneqq\frac{1}{p^{n/2}}\sum_{1\leq x\leq x_k(t)}e_{p^n}\left(hx\right).
\end{equation}
These coefficients are nothing else than the discrete Fourier coefficients of the finite union of intervals given by $1\leq x\leq x_k(t)$ with $(p,x)=1$ for $1\leq k\leq p^{n-1}$. All their useful properties are encapsulated in the following lemma.
\begin{lemma}[The completion method]\label{lemma_step_0}
\strut\par
\begin{itemize}
\item
For $H_{p^n}$ any complete system of residues modulo $p^n$,
\begin{equation}\label{eq_KL_tilde}
\widetilde{\mathsf{Kl}_{p^n}}(t;(a,b_0))=\frac{1}{p^{n/2}}\sum_{h\in H_{p^n}}\alpha_{p^n}(h;t)\mathsf{Kl}_{p^n}(a-h,b_0).
\end{equation}
\item
For any integer $h$ and any real number $t\in[0,1]$,
\begin{equation}\label{eq_bound_alpha}
\alpha_{p^n}(h;t)\leq p^{n/2}\times\begin{cases}
1 & \text{if $\;h=0$,} \\
\frac{1}{2\abs{h}} & \text{\;if $\abs{h}\leq (p^n-1)/2$ and $h\neq 0$.}
\end{cases}
\end{equation}
\item
For any integer $h$ and any real number $t\in[0,1]$,
\begin{equation}\label{eq_approx_alpha}
\frac{1}{p^{n/2}}\alpha_{p^n}(h;t)=\beta(h;t)+O\left(\frac{1}{p^n}\right)
\end{equation}
where
\begin{equation*}
\beta(h;t)=\begin{cases}
t & \text{if $\;h=0$,} \\
\frac{e(ht)-1}{2i\pi h} & \text{otherwise.}
\end{cases}
\end{equation*}
\end{itemize}
\end{lemma}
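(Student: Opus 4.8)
The plan is to prove Lemma~\ref{lemma_step_0} in three independent pieces, matching its three bullet points.

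\textbf{The completion formula \eqref{eq_KL_tilde}.} First I would expand the additive characters using the standard orthogonality identity on $\Z/p^n\Z$. Write, for $1\leq x\leq x_k(t)$ with $(x,p)=1$, the term $e_{p^n}(ax+b_0\overline x)$ and insert the detection of the range $1\leq y\leq x_k(t)$ via
\[
\mathbf{1}_{1\leq y\leq x_k(t)}=\frac{1}{p^n}\sum_{h\in H_{p^n}}\sum_{1\leq z\leq x_k(t)}e_{p^n}\big(h(y-z)\big).
\]
Applying this with $y=x$ and reorganising, the sum over $x$ coprime to $p$ of $e_{p^n}((a-h)x+b_0\overline x)$ reconstitutes $p^{n/2}\,\mathsf{Kl}_{p^n}(a-h,b_0)$, while the sum over $z$ produces $p^{n/2}\,\alpha_{p^n}(h;t)$ by \eqref{eq_Fourier}. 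Collecting the powers of $p^{n/2}$ and $1/p^n$ yields exactly \eqref{eq_KL_tilde}; the identity is independent of the choice of complete residue system since every summand is $p^n$-periodic in $h$.

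\textbf{The size bound \eqref{eq_bound_alpha}.} The case $h=0$ is trivial: $\alpha_{p^n}(0;t)=p^{-n/2}\lfloor x_k(t)\rfloor\leq p^{-n/2}p^n=p^{n/2}$. For $h\neq 0$ with $\abs h\leq (p^n-1)/2$, the inner sum in \eqref{eq_Fourier} is a geometric progression with ratio $e_{p^n}(h)\neq 1$, so
\[
\Bigl|\sum_{1\leq x\leq x_k(t)}e_{p^n}(hx)\Bigr|\leq\frac{2}{\bigl|1-e_{p^n}(h)\bigr|}=\frac{1}{\bigl|\sin(\pi h/p^n)\bigr|}\leq\frac{p^n}{2\abs h},
\]
using $\abs{\sin(\pi u)}\geq 2\abs u$ for $\abs u\leq 1/2$ with $u=h/p^n$; multiplying by $p^{-n/2}$ gives the claim.

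\textbf{The approximation \eqref{eq_approx_alpha}.} Here I would compare the discrete sum $\alpha_{p^n}(h;t)$ to the continuous integral defining $\beta(h;t)$. For $h=0$ this is just $\lfloor x_k(t)\rfloor=\varphi(p^n)t+O(1)$ and $\varphi(p^n)/p^n=1+O(1/p)$, giving $p^{-n/2}\alpha_{p^n}(0;t)=t+O(1/p^n)$ after dividing by $p^{n/2}$ --- actually one should track that $x_k(t)/p^n=t+O(1/p^n)$ directly from the definition of $x_k(t)$ and $\varphi(p^n)=p^n-p^{n-1}$. For $h\neq 0$, summing the geometric series exactly gives
\[
\frac{1}{p^{n/2}}\alpha_{p^n}(h;t)=\frac{1}{p^{n/2}}\cdot\frac{e_{p^n}\big(h(\lfloor x_k(t)\rfloor+1)\big)-e_{p^n}(h)}{e_{p^n}(h)-1},
\]
and a Taylor expansion of $e_{p^n}(hx)=e(hx/p^n)$ together with $\lfloor x_k(t)\rfloor=p^n t+O(1)$ and $e_{p^n}(h)-1=2i\pi h/p^n+O(h^2/p^{2n})$ turns the right-hand side into $\frac{e(ht)-1}{2i\pi h}+O(1/p^n)$. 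I expect the only mildly delicate point is bookkeeping the error terms uniformly in $h$ in this last expansion (one wants the $O(1/p^n)$ to be genuinely uniform over the relevant range of $h$), but since the statement only claims this for a single fixed integer $h$ the implied constant is allowed to depend on $h$ and no uniformity is actually required, so there is no real obstacle --- the argument is entirely elementary.
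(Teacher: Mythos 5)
Your argument is correct and is essentially the standard completion-method proof that the paper omits (it points to \cite[Lemma 2.3, Proposition 2.4]{KoSa}): orthogonality of additive characters to complete the sum for \eqref{eq_KL_tilde}, the geometric-series/$\abs{\sin}$ bound for \eqref{eq_bound_alpha}, and the closed-form geometric sum plus Taylor expansion, together with $x_k(t)/p^n=t+O(p^{-n})$, for \eqref{eq_approx_alpha}. One caution: your closing remark that no uniformity in $h$ is needed misreads the statement --- the implied constant in \eqref{eq_approx_alpha} is meant to be absolute, and the estimate is summed over all $h\in H_{p^n}$ in the proof of Proposition \ref{propo_moment_tilde} --- but your expansion does in fact yield the error $O(1/p^n)$ uniformly for $0<\abs{h}\leq (p^n-1)/2$, since $\abs{e_{p^n}(h)-1}=2\abs{\sin(\pi h/p^n)}\geq 4\abs{h}/p^n$ in that range, so nothing in the later application breaks.
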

\begin{remark}
The proof is omitted since it is very close to the proof of \cite[Lemma 2.3, Proposition 2.4]{KoSa}. The reader may have a look at \cite[Section 4]{Ko} too.
\end{remark}
\par
Let us also define the corresponding moment 
\begin{equation}\label{eq_moments_bis_tilde}
\widetilde{\mathsf{M}_{p^n}}(\uple{t};\uple{m},\uple{n};b_0)\coloneqq\frac{1}{\varphi(p^n)}\sum_{a\in\left(\mathbb{Z}/p^n\mathbb{Z}\right)^\times}\prod_{i=1}^k\overline{\widetilde{\mathsf{Kl}_{p^n}}(t_i;(a,b_0))}^{m_i}\widetilde{\mathsf{Kl}_{p^n}}(t_i;(a,b_0))^{n_i}.
\end{equation}
\par
The following lemma reveals that it is enough to prove an asymptotic formula for $\widetilde{\mathsf{M}_{p^n}}(\uple{t};\uple{m},\uple{n};b_0)$.
\begin{lemma}[Approximation of the moments]\label{lemma_step_1}
One has
\begin{equation*}
\mathsf{M}_{p^n}(\uple{t};\uple{m},\uple{n};b_0)=\widetilde{\mathsf{M}_{p^n}}(\uple{t};\uple{m},\uple{n};b_0)+O\left(\frac{\log^{\ell(\uple{m}+\uple{n})}(p^n)}{p^{n/2}}\right).
\end{equation*}
\end{lemma}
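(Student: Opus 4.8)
The strategy is to compare, for each fixed $a$ and each fixed $i$, the two quantities $\mathsf{Kl}_{p^n}(t_i;(a,b_0))$ and $\widetilde{\mathsf{Kl}_{p^n}}(t_i;(a,b_0))$, show that they differ by at most $O(p^{-n/2})$, and then propagate this through the product and the average. First I would record that $\widetilde{\mathsf{Kl}_{p^n}}(t;(a,b_0))$ is, by \eqref{eq_Kltilde}, a partial Kloosterman sum of the same shape as $\mathsf{Kl}_{j;p^n}(a,b_0)$ but with the summation cut at $\lfloor x_k(t)\rfloor$ rather than at the precise index $j_i$ used in the definition of $\mathsf{Kl}_{p^n}(t_i;(a,b_0))$ via the parametrization of Section \ref{sec_path}. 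The two cutoffs agree up to a bounded number of terms (in fact up to $O(1)$ terms, coming from the difference between counting all integers and counting only those coprime to $p$, i.e. the $\lfloor (j-1)/(p-1)\rfloor$ shift in \eqref{eq_etoile}), and from \eqref{eq_tec} the piecewise-linear interpolation $\mathsf{Kl}_{p^n}(t;(a,b))$ differs from the nearest vertex $z_j((a,b);p^n)$ by at most $p^{-n/2}$. Each omitted or added term contributes $p^{-n/2}$ in absolute value, so altogether
\[
\left\vert \mathsf{Kl}_{p^n}(t_i;(a,b_0)) - \widetilde{\mathsf{Kl}_{p^n}}(t_i;(a,b_0))\right\vert \ll \frac{1}{p^{n/2}}
\]
uniformly in $a$, $i$ and $t_i$.

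Next I would control the size of each factor. By the Weil bound for Kloosterman sums to prime power moduli (or, more elementarily, the explicit evaluation of such sums), $\mathsf{Kl}_{p^n}(a,b_0)\ll 1$, and hence by the completion identity \eqref{eq_KL_tilde} together with the bounds \eqref{eq_bound_alpha} on the Fourier coefficients $\alpha_{p^n}(h;t)$ one gets $\bigl\vert\widetilde{\mathsf{Kl}_{p^n}}(t_i;(a,b_0))\bigr\vert \ll \log(p^n)$, uniformly in $a$, $i$, $t_i$; the same logarithmic bound holds for $\mathsf{Kl}_{p^n}(t_i;(a,b_0))$ itself, since it lies within $p^{-n/2}$ of a vertex $z_j$, which is again an incomplete Kloosterman sum bounded by $\ll\log(p^n)$ via the same completion argument. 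Writing $P = \prod_{i=1}^k\overline{\mathsf{Kl}_{p^n}(t_i;(a,b_0))}^{m_i}\mathsf{Kl}_{p^n}(t_i;(a,b_0))^{n_i}$ and $\widetilde P$ for the corresponding product with tildes, a telescoping argument — replacing the factors one at a time — gives
\[
\bigl\vert P - \widetilde P\bigr\vert \ll \ell(\uple m+\uple n)\,\bigl(\log(p^n)\bigr)^{\ell(\uple m+\uple n)-1}\cdot\frac{1}{p^{n/2}} \ll \frac{\log^{\ell(\uple m+\uple n)}(p^n)}{p^{n/2}}
\]
for each $a$, where I have absorbed the combinatorial factor $\ell(\uple m+\uple n)$ and replaced one power of $\log(p^n)$ by the factor it dominates.

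Finally, averaging over $a\in(\Z/p^n\Z)^\times$ — an average of at most $\varphi(p^n)$ terms, each bounded as above — preserves the bound, yielding
\[
\mathsf{M}_{p^n}(\uple t;\uple m,\uple n;b_0) - \widetilde{\mathsf{M}_{p^n}}(\uple t;\uple m,\uple n;b_0) \ll \frac{\log^{\ell(\uple m+\uple n)}(p^n)}{p^{n/2}},
\]
which is exactly the claimed estimate. I expect the only genuinely delicate point to be the bookkeeping in the first step: one must check carefully that the index used in the parametrization of $\gamma_{p^n}(a,b_0)$ (namely $j_i = j + \lfloor (j-1)/(p-1)\rfloor$ for the appropriate $j$) and the cutoff $\lfloor x_k(t_i)\rfloor$ appearing in \eqref{eq_Kltilde} really do differ by only $O(1)$ summands — in particular that the $k$-dependent shift in \eqref{eq_etoile} and the definition $x_k(t) = \varphi(p^n)t + k - 1$ match up so that no more than a bounded number of terms (equivalently, a single "block boundary" worth of terms, hence $O(1)$) are gained or lost. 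Everything else is a routine combination of the Weil bound, the completion method of Lemma \ref{lemma_step_0}, and elementary telescoping.
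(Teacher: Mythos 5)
Your proposal is correct and follows essentially the route the paper indicates for its (omitted) proof: the pointwise bound $\left\vert\mathsf{Kl}_{p^n}(t;(a,b_0))-\widetilde{\mathsf{Kl}_{p^n}}(t;(a,b_0))\right\vert\ll p^{-n/2}$ as in \eqref{eq_bemol}, the logarithmic bound on each factor coming from the completion estimate \eqref{eq_correctif} together with $\vert\mathsf{Kl}_{p^n}(a,b_0)\vert\leq 2$, then telescoping through the product and averaging over $a$. Your derivation of the pointwise bound by directly matching the cutoff $x_k(t)$ of \eqref{eq_Kltilde} with the vertex index from \eqref{eq_etoile} and \eqref{eq_tec} (an $O(1)$ discrepancy in the number of summands) is a legitimate, slightly more hands-on substitute for the paper's appeal to \eqref{eq_approx_alpha} and \eqref{eq_correctif}, and the bookkeeping you flag does check out.
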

\begin{remark}
The proof is omitted but relies on Lemma \ref{lemma_step_0}, which implies that
\begin{equation}\label{eq_correctif}
\sum_{h\in H_{p^n}}\abs{\alpha_{p^n}(h;t)}\leq 4p^{n/2}\log{(p^n)}
\end{equation}
for $H_{p^n}=\left\{(1-p^n)/2,\dots,(p^{n}-1)/2\right\}$, which is admissible since $p$ is odd, and is close to the proof of \cite[Proposition 2.4]{KoSa}. The reader may have a look at \cite[Section 4]{Ko} too. Note that both Lemma \ref{eq_approx_alpha} and \eqref{eq_correctif} entail that
\begin{equation}\label{eq_bemol}
\left\vert\mathsf{Kl}_{p^n}(t;(a,b))-\widetilde{\mathsf{Kl}_{p^n}}(t;(a,b))\right\vert\leq\frac{6}{p^{n/2}}
\end{equation}
for any $a$, $b$ in $\left(\mathbb{Z}/p^n\mathbb{Z}\right)^\times$ and any $t\in[0,1]$.
\end{remark}
The crucial ingredient in the proof of Proposition \ref{propo_moment_tilde} is the asymptotic evaluation of the complete sums of products of shifted Kloosterman sums $\mathsf{S}_{p^n}(\uple{\mu};b_0)$ defined by
\begin{equation}\label{eq_Spmu}
\mathsf{S}_{p^n}(\uple{\mu};b_0)\coloneqq\frac{1}{\varphi(p^n)}\sum_{a\in\left(\mathbb{Z}/p^n\mathbb{Z}\right)^\times}\prod_{\tau\in \mathbb{Z}/p^n\mathbb{Z}}\mathsf{Kl}_{p^n}(a+\tau,b_0)^{\mu(\tau)}
\end{equation}
for $\uple{\mu}=\left(\mu(\tau)\right)_{\tau\in\mathbb{Z}/p^n\mathbb{Z}}$ a sequence of $p^n$-tuples of non-negative integers different from the $0$-tuple.
\par
The following notations will be used throughout this section.  \label{page_notations} Let us define for such sequence $\uple{\mu}$
\begin{eqnarray*}
\mathsf{T}(\uple{\mu}) & \coloneqq & \left\{\tau\in\Z/p^n\Z, \mu(\tau)\geq 1\right\}\subset\Z/p^n\Z, \\
\overline{\mathsf{T}}(\uple{\mu}) & \coloneqq & \left\{\tau\bmod{p}, \tau\in\mathsf{T}(\uple{\mu})\right\}\subset\Z/p\Z
\end{eqnarray*}
\par
Let $\mathsf{B}_{p^n}(\uple{\mu})$ be the subset of the $\abs{\mathsf{T}(\uple{\mu})}$-tuples $\uple{b}=\left(b_\tau\right)_{\tau\in\mathsf{T}(\uple{\mu})}$ of integers in $\{1,\dots,(p-1)/2\}$ satisfying
\begin{equation}\label{eq_asump_1}
\forall(\tau,\tau')\in\mathsf{T}(\uple{\mu})^2,\quad b_\tau^2-\tau\equiv b_{\tau'}^2-\tau'\bmod{p}
\end{equation}
and
\begin{equation}\label{eq_deuxieme}
\forall\tau\in\mathsf{T}(\uple{\mu}),\quad p\nmid b_\tau^2-\tau.
\end{equation}
\par
Let $\uple{\ell}=\left(\ell_\tau\right)_{\tau\in\mathsf{T}(\uple{\mu})}$ be a $\left\vert\mathsf{T}(\uple{\mu})\right\vert$-tuple of integers. For any integer $j$ in $\{1,\dots,n-1\}$, let us define
\begin{equation}\label{eq_mmm}
m_{\uple{b},\uple{\ell}}(j,j)=\sum_{\tau\in\mathsf{T}(\uple{\mu})}\ell_\tau\overline{b_\tau}^{2j-1}
\end{equation}
and the the following associated object
\begin{equation}\label{eq_tricky_counting}
\mathsf{N}(\uple{\mu},\uple{\ell};w)\coloneqq\sum_{\substack{\uple{b}\in\mathsf{B}_{p^n}(\uple{\mu}) \\
m_{\uple{b},\uple{\ell}}(1,1)\equiv w\bmod{p} \\
\forall j\in\{2,\dots,n-1\},\quad m_{\uple{b},\uple{\ell}}(j,j)\equiv 0\bmod{p}}}1
\end{equation}
for any $w$ modulo $p$.
\par
Finally, let
\begin{equation}\label{eq_apmu}
\mathsf{A}_{p^n}(\uple{\mu})\coloneqq\left\{a\in\left(\Z/p^n\Z\right)^\times, \forall\tau\in\mathsf{T}(\uple{\mu}), a+\tau\in\left(\left(\Z/pçn\Z\right)^\times\right)^2\right\}.
\end{equation}
Firstly, let us prove and recall some useful facts related to Kloosterman sums of prime powers moduli.
\begin{lemma}[Kloosterman sums of prime powers moduli]\label{lemma_kppm}
Let $p$ be an odd prime number satisfying $p\geq 2n-5$ and $a$ be an integer.
\begin{itemize}
\item
If $a$ is divisible by $p$ or $a$ is not a square modulo $p$ then $\mathsf{Kl}_{p^n}(a,1)=0$.
\item
If $a$ is a non-zero square modulo $p$ then
\begin{equation*}
\mathsf{Kl}_{p^n}(a,1)=2\left(\frac{s}{p^n}\right)\cos{\left(\frac{4\pi s}{p^n}+\theta_{p^n}\right)}
\end{equation*}
where
\begin{equation*}
\theta_{p^n}=\begin{cases}
0 & \text{if $2\mid n$ or $p\equiv 1\bmod{4}$,} \\
\pi/2 & \text{if $2\nmid n$ and $p\equiv 3\bmod{4}$}
\end{cases}
\end{equation*}
and $s$ is any solution of
\begin{equation*}
s^2\equiv a \bmod p^n.
\end{equation*}
\item
The bound
\begin{equation}
\left\vert\mathsf{Kl}_{p^n}(a,1)\right\vert\leq 2
\end{equation}
holds.
\item
Let $a$ be a non-zero square modulo $p$. There exists some integers $c_0^\prime, \dots c_{n-1}^\prime$ satisfying
\begin{equation}\label{eq_vp_cm'}
\forall m\in\{0,\dots,n-1\},\quad c_m'\neq 0\quad\text{ and }\quad \nu_p(c_m')=0
\end{equation}  
and some integers $k$ and $b\in\{0,\dots,(p-1)/2\}$ depending on $a$ and $p$ so that
\begin{equation}\label{eq_sr}
s_{a,p^n}=b\sum_{m=0}^{n-1}c_m^\prime\overline{b}^{2m}p^mk^m
\end{equation}
is a solution of $s^2\equiv a \bmod p^n$, where $\overline{b}$ stands for the inverse of $b$ modulo $p^n$.
\end{itemize}
\end{lemma}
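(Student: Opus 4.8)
The plan is to reduce everything to the classical evaluation of Kloosterman sums to prime power moduli via the Hensel/completing-the-square method, as found for instance in \cite[Lemma 12.3]{IwKo} or Salié's original work, and then to keep track carefully of the $p$-adic expansion of the square root $s_{a,p^n}$ in order to extract \eqref{eq_sr}. First I would treat $\mathsf{Kl}_{p^n}(a,1)$ with $p\nmid a$ by writing, for $n\geq 2$, the standard recursion: setting $x=y(1+p^{n-1}z)$ or rather parametrising $x$ modulo $p^{\lceil n/2\rceil}$ and $z$ modulo $p^{\lfloor n/2\rfloor}$, the inner sum over $z$ forces the congruence $a x - \overline{x}\equiv 0$, i.e. $x^2\equiv a\bmod p^{\lceil n/2\rceil}$ (here is where the parity of $n$ produces a Gauss sum factor when $n$ is even and a clean count when $n$ is odd). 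If $a$ is a non-square modulo $p$ there are no solutions and the sum vanishes; if $a\equiv 0$ one checks directly that the sum vanishes for $n\geq 2$. This gives the first bullet.

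For the second bullet, when $a$ is a nonzero square modulo $p$, Hensel's lemma lifts a square root modulo $p$ to a unique (up to sign) square root $s=s_{a,p^n}$ modulo $p^n$; the stationary-phase evaluation then collapses the sum to the two critical points $x\equiv\pm s$, giving $\mathsf{Kl}_{p^n}(a,1)=e_{p^n}(2s)\,g_+ + e_{p^n}(-2s)\,g_-$ where $g_\pm$ are (normalised) quadratic Gauss sums. Evaluating these Gauss sums by the classical formulas — which is exactly where the Legendre symbol $\left(\tfrac{s}{p^n}\right)$ and the phase $\theta_{p^n}$ (depending on $n\bmod 2$ and $p\bmod 4$) enter — and combining the two conjugate terms yields the cosine expression $2\left(\tfrac{s}{p^n}\right)\cos\!\left(\tfrac{4\pi s}{p^n}+\theta_{p^n}\right)$. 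The hypothesis $p\geq 2n-5$ is what guarantees the stationary-phase argument is exact (no higher-order corrections survive). The third bullet $\lvert\mathsf{Kl}_{p^n}(a,1)\rvert\leq 2$ is then immediate from the cosine formula together with the vanishing in the non-square case.

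The last bullet is the only genuinely new computation: I would produce the explicit $p$-adic expansion of $s_{a,p^n}$. Start from a square root $b\in\{0,\dots,(p-1)/2\}$ of $a$ modulo $p$, and write $a\equiv b^2 + pk \bmod{p^n}$ for a suitable integer $k$ (absorbing higher terms). Hensel's iteration gives $s_{a,p^n}\equiv b\bigl(1+\overline{b}^2 pk\bigr)^{1/2}$, and expanding the formal square root $(1+u)^{1/2}=\sum_{m\geq 0}\binom{1/2}{m}u^m$ with $u=\overline{b}^2pk$ — a $p$-adically convergent series since $p$ is odd, truncated at $m=n-1$ because $u^{n}\equiv 0\bmod p^n$ — yields precisely $s_{a,p^n}=b\sum_{m=0}^{n-1}c_m'\,\overline{b}^{2m}p^mk^m$ with $c_m'=\binom{1/2}{m}$ up to sign normalisation; these are rational numbers with $p$-adic valuation $0$ (here the condition $p\geq 2n-5$, hence $p>n$ after a moment's check, ensures no denominator of $\binom{1/2}{m}$ for $m\leq n-1$ is divisible by $p$), so after clearing denominators one gets integers $c_m'$ satisfying \eqref{eq_vp_cm'}. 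The main obstacle is the bookkeeping in this last step: one must be careful that the $c_m'$ can be chosen \emph{independently of $a$} (they are the universal binomial coefficients) while $b$ and $k$ carry the $a$-dependence, and that the valuation condition $\nu_p(c_m')=0$ really holds under the stated lower bound on $p$ — this is the point where the constraint $p\geq 2n-5$ is used rather than merely $p>n$, to leave room in later sections where $\overline{b}^{2j-1}$ and the quantities $m_{\uple{b},\uple{\ell}}(j,j)$ are manipulated.
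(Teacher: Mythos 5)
Your proposal is correct and takes essentially the same route as the paper: the first three bullets are the classical Sali\'e-type evaluation, which the paper simply quotes from \cite[Chapter 12, Equation (12.39)]{IwKo}, and your treatment of the last bullet is exactly the paper's argument, writing $a=b^2+pk$ with $1\leq b\leq (p-1)/2$ and expanding the $p$-adic square root $\left(1+\overline{b}^2pk\right)^{1/2}=\sum_{m\geq 0}\binom{1/2}{m}\left(\overline{b}^2pk\right)^m$, truncated at $m=n-1$, with the universal coefficients $c_m=\binom{1/2}{m}$ being $p$-adic units and $c_m'$ their integer representatives modulo $p^n$. The only small slip is your remark that $p\geq 2n-5$ is what makes the stationary-phase evaluation exact: the explicit formula in the second bullet holds for every odd $p$ and $n\geq 2$ without any such constraint, and the hypothesis is used (as you correctly note at the end, and as in the paper) only to guarantee $c_m'\neq 0$ and $\nu_p(c_m')=0$ for $0\leq m\leq n-1$.
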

\begin{proof}[\proofname{} of lemma \ref{lemma_kppm}]%
The three first items are standard. See \cite[Chapter 12 Equation (12.39)]{IwKo}. In particular, recall that $a$ is a non-zero square modulo $p$ is equivalent to saying that $a$ is a non-zero square modulo $p^n$.
\par
Let us consider the last one. The elements of $\left(\left(\Z/p\Z\right)^\times\right)^2$ are given by
\begin{equation*}
b^2,\quad 1\leq b\leq (p-1)/2.
\end{equation*}
Thus,
\begin{equation*}
a\equiv b^2\bmod{p}
\end{equation*}
for some $1\leq b=b_{a,p}\leq (p-1)/2$ so that
\begin{equation*}
a=b^2+pk
\end{equation*}
for some $k=k_{a,b,p}$ in $\Z$. The congruence to be solved becomes
\begin{equation*}
s^2\equiv a=b^2+pk\equiv b^2\left(1+\overline{b}^2pk\right)\bmod p^n
\end{equation*}
where $\overline{b}$ stands for a representative of the inverse of $b$ modulo $p^n$.  Let us define the $p$-adic integers\footnote{Recall that the prime $p$ is odd.} $c_0=1$, $c_1=1/2$ and
\begin{equation*}
\forall m\geq 2,\quad c_m\coloneqq\frac{(-1)^{m-1}(2m-3)!}{2^{2(m-1)}m!(m-2)!}=\frac{1/2(1/2-1)\dots(1/2-m+1)}{m!}.
\end{equation*}
Obviously,
\begin{equation*}
\forall m\geq 1,\quad c_m=-\frac{2m-3}{2m}c_{m-1}
\end{equation*}
so that
\begin{equation}\label{eq_vp_cm}
\forall m\in\{0,\dots,n-1\},\quad\nu_p(c_m)=0
\end{equation}
since $p\geq 2n-5$. If $x\in p\Z_p$ then, by \cite[Chapter IV.1]{MR754003}, the power series
\begin{equation*}
\sum_{m\geq 0}c_mx^m\in\Z_p[[x]] 
\end{equation*}
converges in the $p$-adic norm to a square root of $1+x$. As a consequence, one has
\begin{equation*}
s\equiv b\sum_{m=0}^{n-1}c_m^\prime\overline{b}^{2m}p^mk^m\bmod{p^n}
\end{equation*}
where the coefficients $c_m^\prime$ are some integers satisfying $c_0^\prime=1$ and
\begin{equation*}
\forall m\geq 1,\quad c_m^\prime\equiv c_m\bmod{p^n},\quad 0\leq c_m^\prime<p^n.
\end{equation*}
In particular, if $0\leq m\leq n-1$ then 
\begin{equation*}
c_m'\neq 0\quad\text{ and }\quad \nu_p(c_m')=\nu_p(c_m)=0.
\end{equation*}
by \eqref{eq_vp_cm}.
\end{proof}
The following proposition contains the upper-bound for $\mathsf{N}(\uple{\mu},\uple{\ell};w)$ defined in \eqref{eq_tricky_counting}.
\begin{proposition}[A counting argument]\label{propo_tricky_counting}
Let $\uple{\mu}=\left(\mu(\tau)\right)_{\tau\in\mathbb{Z}/p^n\mathbb{Z}}$ be a sequence of $p^n$-tuples of non-negative integers satisfying
$\abs{\mathsf{T}(\uple{\mu})}=\abs{\overline{\mathsf{T}}(\uple{\mu})}$ and $\uple{\ell}$ a $\abs{\mathsf{T}(\uple{\mu})}$-tuple of integers satisfying
\begin{equation*}
\forall\tau\in\mathsf{T}(\uple{\mu}),\quad\abs{\ell_\tau}<p
\end{equation*}
and $\uple{\ell}\neq\uple{0}$. One uniformly has
\begin{equation*}
\mathsf{N}(\uple{\mu},\uple{\ell};w)\ll_{\vert\mathsf{T}(\uple{\mu})\vert}1
\end{equation*}
for any $w\bmod{p}$ where the implied constant only depends on $\vert\mathsf{T}(\uple{\mu})\vert$.
\end{proposition}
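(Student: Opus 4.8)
The plan is to realise $\mathsf{N}(\uple{\mu},\uple{\ell};w)$ as a count of points on an algebraic curve at which one nontrivial rational function equals $w$, and then to bound it by a crude zero‑counting estimate. Write $\overline{\mathsf{T}}(\uple{\mu})=\{\tau_1,\dotsc,\tau_r\}$ with $r=\abs{\mathsf{T}(\uple{\mu})}=\abs{\overline{\mathsf{T}}(\uple{\mu})}$; the hypothesis $\abs{\mathsf{T}(\uple{\mu})}=\abs{\overline{\mathsf{T}}(\uple{\mu})}$ means that reduction modulo $p$ identifies $\mathsf{T}(\uple{\mu})$ with $\overline{\mathsf{T}}(\uple{\mu})$, so I would index the tuples $\uple{b}$ and $\uple{\ell}$ by $\{1,\dotsc,r\}$ and read the $\tau_i$ modulo $p$. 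By \eqref{eq_asump_1} the quantity $c\coloneqq b_{\tau_i}^{2}-\tau_i\bmod p$ does not depend on $i$, and by \eqref{eq_deuxieme} one has $c+\tau_i\ne 0$ in $\mathbb{F}_p$ for each $i$. Each $\uple{b}$ counted by $\mathsf{N}$ thus produces the point $P_{\uple{b}}=(c,b_{\tau_1},\dotsc,b_{\tau_r})$ of the affine curve
\[
V\colon\qquad t_i^{2}=c+\tau_i\qquad(1\leq i\leq r)
\]
over $\overline{\mathbb{F}_p}$, and $\uple{b}\mapsto P_{\uple{b}}$ is injective because $\uple{b}$ is already recovered from $c$ alone (each $b_{\tau_i}$ is the unique element of $\{1,\dotsc,(p-1)/2\}$ whose square is $c+\tau_i$); moreover $c+\tau_i\ne 0$ makes $P_{\uple{b}}$ a smooth point of $V$, so it lifts to a point of the smooth projective model $\widetilde{V}$ of $V$. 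Since $m_{\uple{b},\uple{\ell}}(1,1)=\sum_i\ell_{\tau_i}\overline{b_{\tau_i}}$ is exactly the value at $P_{\uple{b}}$ of the rational function $g\coloneqq\sum_{i=1}^{r}\ell_{\tau_i}/t_i=\sum_{i=1}^{r}\ell_{\tau_i}t_i/(c+\tau_i)$ on $\widetilde{V}$, this reduces $\mathsf{N}(\uple{\mu},\uple{\ell};w)$ to at most the number of points of $\widetilde{V}$ at which $g=w$; the constraints $m_{\uple{b},\uple{\ell}}(j,j)\equiv 0$ for $j\geq 2$ are simply dropped.

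I would next record that $\widetilde{V}$ is irreducible, with function field $L=\overline{\mathbb{F}_p}(c)\bigl(\sqrt{c+\tau_1},\dotsc,\sqrt{c+\tau_r}\bigr)$: since $p$ is odd and the $c+\tau_i$ are pairwise distinct monic linear polynomials, no nonempty subproduct of them is a square in $\overline{\mathbb{F}_p}(c)$, so $L/\overline{\mathbb{F}_p}(c)$ is Galois with group $(\mathbb{Z}/2\mathbb{Z})^{r}$, the $j$‑th generator $\sigma_j$ sending $t_j\mapsto -t_j$ and fixing the other coordinates. The heart of the matter — and the step I expect to be the real obstacle — is to show that $g-w$ is not the zero element of $L$. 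Were it zero, $g=w$ would be fixed by the whole group $(\mathbb{Z}/2\mathbb{Z})^{r}$, so $\sigma_j(g)-g=-2\ell_{\tau_j}/t_j=0$ in $L$ for each $j$; since $p$ is odd and $t_j\ne 0$ in $L$, this forces $p\mid\ell_{\tau_j}$ for all $j$, contradicting $\uple{\ell}\ne\uple{0}$ together with $\abs{\ell_\tau}<p$. This is precisely where the hypotheses on $\uple{\ell}$, and the hypothesis $\abs{\mathsf{T}(\uple{\mu})}=\abs{\overline{\mathsf{T}}(\uple{\mu})}$ — which keeps the $\ell_{\tau_i}$ attached to genuinely distinct functions $t_i$ — come into play.

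Finally, since $g-w\ne 0$ in $L$, the number of zeros of $g-w$ on $\widetilde{V}$ equals the degree of its polar divisor, which is that of $g$. The only poles of $g$ lie over $c=-\tau_1,\dotsc,-\tau_r$: over each $c=-\tau_i$ the curve $\widetilde{V}$ has $2^{r-1}$ points — only the coordinate $t_i$ ramifies there, the other $t_j$ staying nonzero — and at each of them $t_i$ is a uniformizer, so $1/t_i$ has a simple pole while the remaining summands of $g$ are regular; over $c=\infty$ every $1/t_i$ vanishes. Hence the polar divisor of $g$ has degree at most $r\cdot 2^{r-1}$, so $g-w$ has at most $r\cdot 2^{r-1}$ zeros on $\widetilde{V}$, and in particular there are at most $r\cdot 2^{r-1}$ points of $\widetilde{V}$ at which $g=w$. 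Combined with the injection $\uple{b}\mapsto P_{\uple{b}}$ this gives
\[
\mathsf{N}(\uple{\mu},\uple{\ell};w)\leq\abs{\mathsf{T}(\uple{\mu})}\cdot 2^{\abs{\mathsf{T}(\uple{\mu})}-1},
\]
uniformly in $w\bmod p$ and in $p$, which is the claimed bound. One could alternatively clear denominators and apply Bézout's theorem to the curve $V$, whose degree is at most $2^{r}$; the function‑field formulation serves only to make the non‑vanishing of $g-w$ transparent.
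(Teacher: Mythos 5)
Your argument is correct, and it reaches the same kind of bound ($\abs{\mathsf{T}(\uple{\mu})}\,2^{\abs{\mathsf{T}(\uple{\mu})}-1}$, uniform in $w$ and $p$) as the paper, but by a genuinely different route. The paper also drops the conditions on $m_{\uple{b},\uple{\ell}}(j,j)$ for $j\geq 2$ and reduces to counting the common value $c\equiv b_\tau^2-\tau\bmod p$, but it then works purely with explicit polynomial algebra: it forms $Q(\uple{a};X)=\prod_{\uple{\epsilon}\in\{\pm1\}^k}\bigl(X-\sum_\tau\epsilon_\tau a_\tau\bigr)$, substitutes $a_\tau=\ell_\tau Y_\tau^{-1}$, clears denominators, and applies the ring morphism $Y_\tau^2\mapsto Z+\tau$ to obtain a nonzero polynomial in $Z$ of degree $\leq k2^{k-1}$ vanishing at every admissible $c$ — in effect the norm of $X-g$ from your field $L$ down to $\overline{\mathbb{F}_p}(Z)$, computed by hand. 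The cost of that explicitness is a case distinction: for $w\not\equiv 0$ the leading coefficient is $w^{2^k}\neq 0$, while for $w\equiv 0$ the paper must build a second auxiliary polynomial $S_{\uple{\ell}}$ and prove its image nonzero by evaluating at $Z=-\tau_0$. Your function-field formulation buys exactly the removal of that case split: the Kummer-theoretic description of $L$ with Galois group $(\mathbb{Z}/2\mathbb{Z})^r$ makes the nonvanishing of $g-w$ transparent for every constant $w$ (including $w=0$, and also the $r=1$ case, which the paper treats separately), and the zero count comes from the degree of the polar divisor of $g$, which you compute correctly ($2^{r-1}$ places over each $c=-\tau_i$, each a simple pole, none at infinity). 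The paper's approach, conversely, needs nothing beyond elementary manipulation of polynomials over $\mathbb{F}_p$, whereas yours invokes the smooth projective model, Kummer theory and divisor degrees; both avoid any Weil-type input, and your injection $\uple{b}\mapsto P_{\uple{b}}$ plays the same role as the paper's observation that each $c$ determines at most one $\uple{b}$.
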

\begin{proof}[\proofname{} of proposition \ref{propo_tricky_counting}]%
Let $k\coloneqq\vert\mathsf{T}(\uple{\mu})\vert$ for simplicity.
\par
Let us assume that $k=1$. In this case, $\mathsf{T}(\uple{\mu})=\{\tau_0\}$ and one has
\begin{equation*}
\ell_{\tau_0}\overline{b_{\tau_0}}=m_{\uple{b},\uple{\ell}}(1,1)\equiv w\bmod{p},
\end{equation*}
which fixes the value of $b_{\tau_0}$ since $\ell_{\tau_0}$ is coprime with $p$.
\par
Let us assume from now on that $k\geq 2$. One has
\begin{equation}\label{eq_c}
\mathsf{N}(\uple{\mu},\uple{\ell};w)=\sum_{\substack{c\bmod{p} \\
(p,c)=1}}\sum_{\substack{\uple{b}\in\mathsf{B}_{p^n}(\uple{\mu}) \\
m_{\uple{b}}(1,1)\equiv w\bmod{p} \\
\forall j\in\{2,\dots,n-1\},\quad m_{\uple{b}}(j,j)\equiv 0\bmod{p} \\
\forall\tau\in\mathsf{T}(\uple{\mu}),\quad b_\tau^2\equiv c+\tau\bmod{p}}}1.
\end{equation}
Note that for a fixed $c$, there is at most one tuple $\uple{b}$ since their coordinates satisfy the given quadratic equations modulo $p$. The basic idea to show that there is a bounded number of integers $c$ modulo $p$ is to find a polynomial, which vanishes on these $c$'s and whose degree only depends on $k$.
\par
Let us consider the polynomial
\begin{equation*}
Q(\uple{a};X)=\prod_{\uple{\epsilon}=\left(\epsilon_\tau\right)_{\tau\in\mathsf{T}(\uple{\mu})}\in\{\pm 1\}^k}\left(X-\sum_{\tau\in\mathsf{T}(\uple{\mu})}\epsilon_\tau a_\tau\right)\in\mathbb{F}_p[\uple{a},X]
\end{equation*}
in the variables $a_\tau, \tau\in\mathsf{T}(\uple{\mu})$, and $X$.
\par
This polynomial can be written as
\begin{equation*}
Q(\uple{a};X)=\sum_{i=0}^{2^{k-1}}Q_i\left(\uple{a}\right)X^{2i}+X^{2^k}
\end{equation*}
where $Q_i\in\mathbb{F}_p\left[\uple{a}\right]$ is a homogeneous polynomial of degree $2^k-2i$ for $0\leq i\leq 2^{k-1}$, which only involves even powers of $a_i$ ($0\leq i\leq k$). The fact that only even powers of $X$ occur easily follows from the fact that if $\uple{\epsilon}$ belongs to $\{\pm 1\}^k$ then so does $-\uple{\epsilon}$. The fact that each monomial only contains even powers of $a_i$ for $1\leq i\leq k$ is due to the obvious invariance property given by
\begin{equation*}
\forall\uple{\epsilon}\in\{\pm 1\}^k,\quad Q(\uple{\epsilon}.\uple{a};X)=Q(\uple{a};X)
\end{equation*}
where $.$ stands for the coordinates by coordinates product between tuples.
\par
The previous discussion implies that
\begin{equation*}
R_{\uple{\ell}}(\uple{Y};X)\coloneqq\left(\prod_{\tau\in\mathsf{T}(\uple{\mu})}Y_\tau\right)^{2^k}Q(\uple{\ell}.\uple{Y}^{-1};X)=\sum_{i=0}^{2^{k-1}}R_{i,\uple{\ell}}\left(\uple{Y}\right)X^{2i}+\left(\prod_{\tau\in\mathsf{T}(\uple{\mu})}Y_\tau\right)^{2^k}X^{2^k}
\end{equation*}
where $\uple{Y}=\left(Y_\tau\right)_{\tau\in\mathsf{T}(\uple{\mu})}$ and $\uple{Y}^{-1}=\left(Y_\tau^{-1}\right)_{\tau\in\mathsf{T}(\uple{\mu})}$ and for $0\leq i\leq 2^{k-1}$, $R_{i,\uple{\ell}}\in\mathbb{F}_p\left[\uple{Y}^2\right]$ is a homogeneous polynomial of degree $(k-1)2^k+2i$, which only involves even powers of $Y_\tau$ for $\tau\in\mathsf{T}(\uple{\mu})$. Here, $\uple{Y}^2=\left(Y_\tau^2\right)_{\tau\in\mathsf{T}(\uple{\mu})}$.
\par
Let us denote by $\psi$ the ring morphism from $\mathbb{F}_p\left[\uple{Y}^2\right]$ to $\mathbb{F}_p[Z]$ defined by
\begin{equation*}
\forall\tau\in\mathsf{T}(\uple{\mu}),\quad\psi\left(Y_\tau^2\right)=Z+\tau.
\end{equation*}
\par
Let us assume that $(p,w)=1$. Note that if the tuple $\uple{b}$ satisfies the constraints given in \eqref{eq_c} then $R_{\uple{\ell}}(\uple{b};w)=0$ since the contribution of $\uple{\epsilon}=(1,\dots,1)$ in $Q(\uple{\ell}.\uple{b}^{-1};w)$ is exactly $w-m_{\uple{b},\uple{\ell}}(1,1)\equiv 0\bmod{p}$. Thus, $c$ is a root of the polynomial $\psi(R_{\uple{\ell}}(\uple{Y};w))$, which is of degree $k2^{k-1}$ and leading coefficient $w^{2^k}\neq 0\bmod{p}$. As a consequence, the number of $c$'s in \eqref{eq_c} is less than $k2^{k-1}$.
\par
Let us assume that $w\equiv 0\bmod{p}$. Let $\tau_0$ in $\mathsf{T}(\uple{\mu})$ satisfying
\begin{equation*}
p\nmid\ell_{\tau_0},
\end{equation*}
which exists by the conditions of the tuple $\uple{\ell}$.
\par
Let us consider
\begin{equation*}
S_{\uple{\ell}}(\uple{Y})\coloneqq\left(\prod_{\tau\in\mathsf{T}(\uple{\mu})}Y_\tau\right)^{2^{k-1}}Q\left(\uple{\tilde{\ell}}.\uple{\tilde{Y}}^{-1};\ell_{\tau_0}Y_{\tau_0}^{-1}\right)\in\mathbb{F}_p[\uple{\tilde{Y}},Y_{\tau_0}]
\end{equation*}
where $\uple{\tilde{Y}}^{-1}=\left(Y_\tau^{-1}\right)_{\tau\in\mathsf{T}(\uple{\mu})\setminus\{\tau_0\}}$ and  $\uple{\tilde{\ell}}=\left(\ell_\tau\right)_{\tau\in\mathsf{T}(\uple{\mu})\setminus\{\tau_0\}}$ . This polynomial is homogenous of degree $(k-1)2^{k-1}$ and only involves even powers of $Y_\tau$ for $\tau\in\mathsf{T}(\uple{\mu})$. 
\par
Thus, the polynomial $U=\psi(S_{\uple{\ell}}(\uple{Y}))\in\mathbb{F}_p(Z)$ is of degree less than $(k-1)2^{k-2}$. Let us show that this polynomial is of degree at least one. If not, all the coefficients but the constant one of the polynomial $U$ vanish. If the tuple $\uple{b}$ satisfies the constraints given in \eqref{eq_c} then $S_{\uple{\ell}}(\uple{b})=0$ because of the contribution of $\uple{\epsilon}=(-1,\dots,-1)$ in $Q\left(\uple{\tilde{\ell}}.\uple{\tilde{Y}}^{-1};\ell_{\tau_0}Y_{\tau_0}^{-1}\right)$. This implies that $U(c)=0$ and that $U$ is the constant polynomial of value $0$. Choosing $Z=-\tau_{0}$ leads to
\begin{equation*}
\ell_{\tau_0}^{2^{k-1}}\prod_{\substack{\tau\in\mathsf{T}(\uple{\mu}) \\
\tau\neq\tau_0}}(\tau-\tau_{0})^{2^{k-1}}\equiv 0\bmod{p}
\end{equation*} 
so that
\begin{equation*}
\ell_{\tau_0}\equiv 0\bmod{p}
\end{equation*}
since the $\tau$'s are distinct modulo $p$. This is a contradiction.
\par
Finally, the $c$'s satisfy the polynomial equation $U(c)=0$ of degree at least $1$ and less than $(k-1)2^{k-2}$.  As a consequence, the number of $c$'s in \eqref{eq_c} is less than $(k-1)2^{k-2}$.
\end{proof}
\par
The following proposition contains the asymptotic evaluation of the cardinality of the set $\mathsf{A}_{p^n}(\uple{\mu})$ defined in \eqref{eq_apmu}.
\begin{proposition}[Applying A.~Weil's version of the Riemann hypothesis]\label{propo_cardinality}
Let $\uple{\mu}=\left(\mu(\tau)\right)_{\tau\in\mathbb{Z}/p^n\mathbb{Z}}$ be a sequence of $p^n$-tuples of non-negative integers. If $p$ is odd then
\begin{equation}\label{eq_cardinality}
\left\vert\mathsf{A}_{p^n}(\uple{\mu})\right\vert=\frac{\varphi(p^n)}{2^{\abs{\overline{\mathsf{T}}(\uple{\mu})}}}\left(1+O\left(\frac{2^{\abs{\overline{\mathsf{T}}(\uple{\mu})}}\abs{\overline{\mathsf{T}}(\uple{\mu})}}{p^{1/2}}\right)\right).
\end{equation}
\end{proposition}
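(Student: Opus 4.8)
The plan is to reduce the counting problem to a count of residue classes modulo $p$, to detect squares by means of the Legendre symbol, and then to invoke A.~Weil's bound for the resulting character sums.

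\emph{Step 1: reduction modulo $p$.} Since $p$ is odd, a unit of $\Z/p^n\Z$ is a square in $\left(\Z/p^n\Z\right)^\times$ if and only if it is a non-zero square modulo $p$: indeed $\left(\Z/p^n\Z\right)^\times$ is cyclic of even order, so its squares form the unique subgroup of index $2$, and this subgroup coincides with the preimage under reduction modulo $p$ of the subgroup of squares of $\left(\Z/p\Z\right)^\times$, which also has index $2$. This is the observation already used in the proof of Lemma \ref{lemma_kppm}. Consequently, for $\tau\in\mathsf{T}(\uple{\mu})$, the condition $a+\tau\in\left(\left(\Z/p^n\Z\right)^\times\right)^2$ depends only on $a$ modulo $p$ and on $\tau$ modulo $p$, so membership of $a$ in $\mathsf{A}_{p^n}(\uple{\mu})$ depends only on the residue $\bar a$ of $a$ modulo $p$ and on $\overline{\mathsf{T}}(\uple{\mu})$; each admissible residue has exactly $p^{n-1}$ lifts modulo $p^n$, all of them units. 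Writing $r\coloneqq\left\vert\overline{\mathsf{T}}(\uple{\mu})\right\vert$, this gives
\begin{equation*}
\left\vert\mathsf{A}_{p^n}(\uple{\mu})\right\vert=p^{n-1}M,\qquad M\coloneqq\#\left\{\bar a\in\Z/p\Z:\ p\nmid\bar a\ \text{and}\ \forall\bar\tau\in\overline{\mathsf{T}}(\uple{\mu}),\ \bar a+\bar\tau\in\left(\mathbb{F}_p^\times\right)^2\right\}.
\end{equation*}

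\emph{Step 2: detecting squares and applying Weil's bound.} For $x\in\mathbb{F}_p$ the quantity $\tfrac12\left(1+\left(\tfrac{x}{p}\right)\right)$, where $\left(\tfrac{\cdot}{p}\right)$ denotes the Legendre symbol, equals $1$ if $x$ is a non-zero square, $0$ if $x$ is a non-square and $\tfrac12$ if $x=0$. Since the elements of $\overline{\mathsf{T}}(\uple{\mu})$ are pairwise distinct, for each $\bar a$ at most one factor $\bar a+\bar\tau$ can vanish, so $\prod_{\bar\tau\in\overline{\mathsf{T}}(\uple{\mu})}\tfrac12\left(1+\left(\tfrac{\bar a+\bar\tau}{p}\right)\right)$ equals $1$, $0$ or $\tfrac12$, the last case occurring for at most $r$ residues $\bar a$. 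Setting $M'\coloneqq\sum_{\bar a\bmod p}\prod_{\bar\tau\in\overline{\mathsf{T}}(\uple{\mu})}\tfrac12\left(1+\left(\tfrac{\bar a+\bar\tau}{p}\right)\right)$, we get $M=M'+O(1+r)$, the $O(1)$ accounting for the possible exclusion of $\bar a=0$. Expanding the product,
\begin{equation*}
M'=\frac{1}{2^r}\sum_{S\subseteq\overline{\mathsf{T}}(\uple{\mu})}\ \sum_{\bar a\bmod p}\left(\frac{\prod_{\bar\tau\in S}(\bar a+\bar\tau)}{p}\right),
\end{equation*}
where the term $S=\emptyset$ contributes $p$; for $S\neq\emptyset$ the polynomial $f_S(X)\coloneqq\prod_{\bar\tau\in S}(X+\bar\tau)$ is squarefree of degree $\vert S\vert\geq 1$, hence not the square of a polynomial times a constant, and A.~Weil's version of the Riemann hypothesis for the curve $y^2=f_S(x)$ gives $\left\vert\sum_{\bar a\bmod p}\left(\tfrac{f_S(\bar a)}{p}\right)\right\vert\leq(\vert S\vert-1)p^{1/2}\leq(r-1)p^{1/2}$. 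Summing over the $2^r-1$ non-empty subsets yields $M'=2^{-r}p+O\left(rp^{1/2}\right)$, hence $M=2^{-r}p+O\left(1+rp^{1/2}\right)$.

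\emph{Step 3: conclusion.} Multiplying by $p^{n-1}$ and using $\varphi(p^n)=p^{n-1}(p-1)=p^n+O(p^{n-1})$ together with $2^{-r}p^{n-1}\ll 2^{-r}p^n\cdot\left(2^r r/p^{1/2}\right)$ for $p$ large (the case $r=0$ being trivial, since then $\mathsf{A}_{p^n}(\uple{\mu})=\left(\Z/p^n\Z\right)^\times$), one obtains
\begin{equation*}
\left\vert\mathsf{A}_{p^n}(\uple{\mu})\right\vert=\frac{p^n}{2^r}+O\left(rp^{n-1/2}\right)=\frac{\varphi(p^n)}{2^r}\left(1+O\left(\frac{2^r r}{p^{1/2}}\right)\right),
\end{equation*}
which is the claimed estimate with $r=\left\vert\overline{\mathsf{T}}(\uple{\mu})\right\vert$.

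The only genuinely deep input is A.~Weil's bound for the character sums attached to the hyperelliptic curves $y^2=f_S(x)$, precisely the ingredient flagged in the remark preceding the proposition; the remainder is elementary bookkeeping. The only points requiring a little care are the degenerate residues $\bar a\equiv-\bar\tau\pmod p$, where the Legendre weight is $\tfrac12$ rather than $0$ or $1$, and the side condition $p\nmid a$ — both absorbed into the harmless error $O(1+r)$ above — so I do not anticipate any real obstacle beyond correctly invoking the Weil bound.
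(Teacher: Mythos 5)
Your proof is correct and follows essentially the same route as the paper: there, too, one reduces to a count modulo $p$ weighted by $p^{n-1}$, detects squares via $\tfrac12\left(\chi_2(a+t)+1\right)$, and then appeals (with details omitted, via the reference to Davenport's problem) to exactly the expansion-plus-Weil-bound argument you carry out explicitly. The treatment of the degenerate residues $\bar a\equiv-\bar\tau\bmod p$ and of $\bar a=0$ inside an $O(1+r)$ error is the only bookkeeping the paper glosses over, and you handle it correctly.
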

\begin{remark}
The equation \eqref{eq_cardinality} is an asymptotic expansion if and only if
\begin{equation}\label{eq_cond_asymp}
\frac{2^{\abs{\overline{\mathsf{T}}(\uple{\mu})}}\abs{\overline{\mathsf{T}}(\uple{\mu})}}{p^{1/2}}\to 0
\end{equation}
as $p$ tends to infinity among the prime numbers.
\end{remark}
\begin{proof}[\proofname{} of proposition \ref{propo_cardinality}]%
Obviously,
\begin{align*}
\left\vert\mathsf{A}_{p^n}(\uple{\mu})\right\vert & =p^{n-1}\sum_{\substack{a\in\left(\Z/p\Z\right)^\times \\
\forall\tau\in\mathsf{T}(\uple{\mu}), a+\tau\in\left(\left(\Z/p\Z\right)^\times\right)^2}}1 \\
& =p^{n-1}\sum_{\substack{a\in\left(\Z/p\Z\right)^\times \\
\forall t\in\overline{\mathsf{T}}(\uple{\mu}), a+t\in\left(\left(\Z/p\Z\right)^\times\right)^2}}1 \\
& =p^{n-1}\sum_{a\in\left(\Z/p\Z\right)^\times}\prod_{\substack{t\in\overline{\mathsf{T}}(\uple{\mu}) \\
(a+t,p)=1}}\frac{1}{2}\left(\chi_2(a+t)+1\right) \\
\end{align*}
where $\chi_2$ is the quadratic character modulo the odd prime number $p$. 
\par
At this point, the problem becomes a variant of the question considered by H.~Davenport in 1931 of counting elements $x$ modulo $p$ such that both $x, x+1,\dots,x+k$ are quadratic residues modulo $p$ uniformly with respect to the integer $k\geq 1$. See for instance \cite[Section 1.4.2]{MR617009}. Thus, the end of the proof is omitted. 
\end{proof}
The core of the proof of proposition \ref{propo_moment_tilde} is the following result.
\begin{proposition}[Moments of shifted Kloosterman sums]\label{propo_shifted_Kl}
Let $\uple{\mu}=\left(\mu(\tau)\right)_{\tau\in\mathbb{Z}/p^n\mathbb{Z}}$ be a sequence of $p^n$-tuples of non-negative integers satisfying
\begin{equation}\label{eq_assump_mu}
\sum_{\tau\in\Z/p^n\Z}\mu(\tau)\leq M
\end{equation}
for some absolute positive constant $M$ and $\abs{\mathsf{T}(\uple{\mu})}=\abs{\overline{\mathsf{T}}(\uple{\mu})}$. If 
\begin{equation}\label{eq_assump_p}
p>\max{\left(M,2n-5\right)}
\end{equation}
then
\begin{equation}\label{eq_asymp_S}
\mathsf{S}_{p^n}(\uple{\mu};b_0)=\left[\prod_{\tau\in \mathbb{Z}/p^n\mathbb{Z}}\delta_{2\mid\mu(\tau)}\binom{\mu(\tau)}{\mu(\tau)/2}\right]\frac{\left\vert\mathsf{A}_{p^n}(\uple{\mu})\right\vert}{\varphi(p^n)}+O_{M,\epsilon}\left(p^{-\frac{4(n-1)}{2^n}+\epsilon}\right)
\end{equation}
for any $\epsilon>0$ and where the implied constant only depends on $M$ and $\epsilon$.
\end{proposition}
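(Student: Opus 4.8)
The plan is to open up the product of shifted Kloosterman sums using the explicit evaluation of $\mathsf{Kl}_{p^n}(a+\tau,1)$ from Lemma \ref{lemma_kppm}. First I would restrict the outer sum over $a$ to the set $\mathsf{A}_{p^n}(\uple{\mu})$: whenever some $a+\tau$ with $\tau\in\mathsf{T}(\uple{\mu})$ is either divisible by $p$ or a non-square mod $p$, the corresponding factor vanishes, so only $a\in\mathsf{A}_{p^n}(\uple{\mu})$ contributes. For such $a$, Lemma \ref{lemma_kppm} gives $\mathsf{Kl}_{p^n}(a+\tau,1)=2\left(\frac{s_\tau}{p^n}\right)\cos\!\left(\frac{4\pi s_\tau}{p^n}+\theta_{p^n}\right)$ with $s_\tau=s_{a+\tau,p^n}$ a square root of $a+\tau$ mod $p^n$. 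Writing $\cos$ in terms of exponentials and expanding the $\mu(\tau)$-th power by the binomial theorem, each factor becomes a sum of terms $\left(\frac{s_\tau}{p^n}\right)^{\mu(\tau)}e_{p^n}\!\bigl(4\ell_\tau s_\tau\bigr)$ times a combinatorial weight, where $\ell_\tau$ ranges over $\{-\mu(\tau),\dots,\mu(\tau)\}$ with the usual shift, so $|\ell_\tau|\le M<p$. Using $\left(\frac{s_\tau}{p^n}\right)=\left(\frac{a+\tau}{p}\right)^{n}=1$ on $\mathsf{A}_{p^n}(\uple{\mu})$ (up to a fixed sign depending only on $n$, absorbed into the $\theta_{p^n}$ bookkeeping), the sign factors are harmless and one is left with
\[
\mathsf{S}_{p^n}(\uple{\mu};1)=\frac{1}{\varphi(p^n)}\sum_{\uple{\ell}}\Bigl(\prod_{\tau}w_\tau(\ell_\tau)\Bigr)\sum_{a\in\mathsf{A}_{p^n}(\uple{\mu})}e_{p^n}\!\Bigl(4\sum_{\tau\in\mathsf{T}(\uple{\mu})}\ell_\tau s_{a+\tau,p^n}\Bigr),
\]
where $w_\tau(\ell_\tau)$ are explicit binomial weights.

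Next I would isolate the diagonal term $\uple{\ell}=\uple{0}$: its combinatorial weight is $\prod_\tau\binom{\mu(\tau)}{\mu(\tau)/2}\delta_{2\mid\mu(\tau)}$ and its inner sum is exactly $|\mathsf{A}_{p^n}(\uple{\mu})|$, producing the main term $\bigl[\prod_\tau\delta_{2\mid\mu(\tau)}\binom{\mu(\tau)}{\mu(\tau)/2}\bigr]\,|\mathsf{A}_{p^n}(\uple{\mu})|/\varphi(p^n)$. For the off-diagonal terms $\uple{\ell}\neq\uple{0}$, the task is to bound
\[
E(\uple{\ell})\coloneqq\frac{1}{\varphi(p^n)}\sum_{a\in\mathsf{A}_{p^n}(\uple{\mu})}e_{p^n}\!\Bigl(4\sum_{\tau}\ell_\tau s_{a+\tau,p^n}\Bigr).
\]
Here I would insert the $p$-adic square-root expansion \eqref{eq_sr}: writing $a+\tau\equiv b_\tau^2(1+\overline{b_\tau}^2 p k_\tau)\bmod p^n$ with $b_\tau\in\{1,\dots,(p-1)/2\}$ and $k_\tau$ determined by $a$, one gets $s_{a+\tau,p^n}=b_\tau\sum_{m=0}^{n-1}c_m'\overline{b_\tau}^{2m}p^m k_\tau^m$. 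Re-parametrising the sum over $a\in\mathsf{A}_{p^n}(\uple{\mu})$ by the data $(\uple{b},\text{lift})$ — i.e. the residue $c=a\bmod p$ (which fixes the tuple $\uple b$ via $b_\tau^2\equiv c+\tau$) together with the higher $p$-adic digits of $a$ — turns the exponential into a polynomial phase in those digits. Summing geometrically over the top digit (a complete sum mod $p$ of $e_p$ of a linear form whose coefficient is essentially $m_{\uple b,\uple\ell}(n-1,n-1)$), then the next, and so on, one finds that the sum vanishes unless $m_{\uple b,\uple\ell}(j,j)\equiv 0\bmod p$ for all $j\in\{2,\dots,n-1\}$, and the residual innermost sum over $c$ (equivalently over $\uple b$) with the constraint $m_{\uple b,\uple\ell}(1,1)\equiv w$ is exactly $\mathsf{N}(\uple\mu,\uple\ell;w)$ from \eqref{eq_tricky_counting}, up to a complete Gauss-type sum in the remaining free variable. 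The upshot is $|E(\uple\ell)|\ll_{M,\epsilon} p^{-\frac{4(n-1)}{2^n}+\epsilon}$ after applying Proposition \ref{propo_tricky_counting} to bound $\mathsf{N}$ and Weil's bound for the Gauss/Kloosterman-type sum that survives — the exponent $\frac{4(n-1)}{2^n}$ arising from the stratification of the $p$-adic digits combined with the savings in each geometric-sum step. Finally, summing $|E(\uple\ell)|$ over the $O_M(1)$ choices of $\uple\ell$ and multiplying by the bounded combinatorial weights yields the stated error $O_{M,\epsilon}(p^{-\frac{4(n-1)}{2^n}+\epsilon})$.

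The main obstacle is the off-diagonal estimate, specifically tracking how the $p$-adic expansion \eqref{eq_sr} interacts with the successive geometric summations over the digits of $a$: one must check carefully that each digit genuinely decouples, that the linear forms appearing at level $j$ are exactly the $m_{\uple b,\uple\ell}(j,j)$ (so that Proposition \ref{propo_tricky_counting} applies), and that the surviving lowest-level sum is a genuine algebraic exponential sum of bounded conductor to which Weil's bound \cite[Chapter 12]{IwKo} applies with the claimed saving. The hypothesis $p>2n-5$ is exactly what makes all the $c_m'$ units (via \eqref{eq_vp_cm}) so that the leading digit-coefficients are nonzero mod $p$, and $p>M$ guarantees $|\ell_\tau|<p$ so that Proposition \ref{propo_tricky_counting} is available; I would invoke both at the appropriate places. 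The rest — binomial expansions, bookkeeping of the sign $\left(\frac{\cdot}{p^n}\right)$ and of $\theta_{p^n}$, and summing the finitely many $\uple\ell$ — is routine.
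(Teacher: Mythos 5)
Your treatment of the main term is essentially the paper's: only $a\in\mathsf{A}_{p^n}(\uple{\mu})$ survives by Lemma \ref{lemma_kppm}, the power of the cosine is expanded, and the balanced frequency $\uple{\ell}=\uple{0}$ produces $\bigl[\prod_{\tau}\delta_{2\mid\mu(\tau)}\binom{\mu(\tau)}{\mu(\tau)/2}\bigr]\left\vert\mathsf{A}_{p^n}(\uple{\mu})\right\vert/\varphi(p^n)$ (the Jacobi-symbol sign is harmless there because the multiplicities are even; it is not identically $1$ on $\mathsf{A}_{p^n}(\uple{\mu})$ as you assert, and the reduction to $b_0=1$ by $a\mapsto b_0a$ should be said explicitly). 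The genuine gap is in the off-diagonal estimate, which is the heart of the proposition. After substituting $a=b_{\tau_0}^2-\tau_0+up$ and using \eqref{eq_sr}, the phase is a polynomial $P_{\uple{b}}(u)=\sum_{j\leq n-1}a_j(\uple{b})u^j$ modulo $p^n$ whose coefficients $a_j(\uple{b})$ involve the mixed quantities $m_{\uple{b}}(r,j)=\sum_{\tau}\ell_\tau\overline{b_\tau}^{2r-1}d_\tau^{r-j}$ for all $r\geq j$, not only the diagonal $m_{\uple{b},\uple{\ell}}(j,j)$. Your claim that digit-by-digit geometric summation forces the sum over $u$ to vanish unless $m_{\uple{b},\uple{\ell}}(j,j)\equiv 0\bmod p$ for $j\in\{2,\dots,n-1\}$, leaving exactly $\mathsf{N}(\uple{\mu},\uple{\ell};w)$ times a Gauss-type sum, is not correct: extracting the top digit linearizes the phase only in that digit and yields a condition on $a_1(\uple{b})$ (hence on $m_{\uple{b},\uple{\ell}}(1,1)$, not on $m_{\uple{b},\uple{\ell}}(n-1,n-1)$), and the subsequent digits lead to stationary-phase conditions on $P_{\uple{b}}'$ which mix the $d_\tau$-dependent coefficients. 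In the paper the clean conditions $m_{\uple{b},\uple{\ell}}(j,j)\equiv0\bmod p$ appear only in the degenerate case where $p^n\mid a_j(\uple{b})$ for all $j\geq1$; there the exponential sum is bounded trivially by $p^{n-1}$ and the saving comes from counting the tuples $\uple{b}$ via Proposition \ref{propo_tricky_counting}.

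Moreover, the exponent $\frac{4(n-1)}{2^n}$ cannot be produced by the mechanism you describe. In the paper it is precisely the saving $N^{2^{2-n}}$ with $N=p^{n-1}$ obtained by Weyl differencing (\cite[Proposition 8.2]{IwKo}) applied to the polynomial phase when its effective degree $j(\uple{b})$ lies in $\{2,\dots,n-1\}$, together with a trivial count of the $\uple{b}$'s; no Weil-type bound for algebraic exponential sums enters this proof at all (A.~Weil's Riemann hypothesis is used only in Proposition \ref{propo_cardinality}). You also do not address the borderline case where the phase is essentially linear ($j(\uple{b})=1$), which the paper handles by combining the bound $\min\bigl(2N,\vert\vert a_1(\uple{b})/p^n\vert\vert^{-1}\bigr)$ with the count of tuples satisfying $a_1(\uple{b})/p^k\equiv v\bmod{p^{n-k}}$, i.e.\ $\mathsf{N}\bigl(\uple{\mu},\uple{\ell};\overline{c_1'}vp^{k-1}\bigr)$, before Proposition \ref{propo_tricky_counting} can be invoked. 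As written, the error-term part of your argument rests on an incorrect vanishing claim and an unproved source for the stated power saving, so the proposal does not establish \eqref{eq_asymp_S}.
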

\begin{remark}
In particular, for any non-negative integer $m$,
\begin{equation}\label{eq_usual_moment}
\frac{1}{\varphi(p^n)}\sum_{a\in\left(\mathbb{Z}/p^n\mathbb{Z}\right)^\times}\mathsf{Kl}_{p^n}(a,b_0)^m=\delta_{2\mid m}\frac{1}{2}\binom{m}{m/2}+O_{m,\epsilon}\left(p^{-\frac{4(n-1)}{2^n}+\epsilon}\right)
\end{equation}
for any $\epsilon>0$ under \eqref{eq_assump_p}. In other words, under the same assumption,
\begin{equation*}
\frac{1}{\varphi(p^n)}\sum_{a\in\left(\mathbb{Z}/p^n\mathbb{Z}\right)^\times}\mathsf{Kl}_{p^n}(a,b_0)^m=\mathbb{E}(U^m)+O_{m,\epsilon}\left(p^{-\frac{4(n-1)}{2^n}+\epsilon}\right)
\end{equation*}
where $U$ is any real-valued random variable of law the probability measure $\mu$ defined in \eqref{eq_def_mu_1}. Hence, by \eqref{eq_mu_1}, the normalized Kloosterman sums $\mathsf{Kl}_{p^n}(a,b_0)$ become equidistributed in $[-2,2]$ with respect to the measure $\mu$ as briefly indicated in Remark \ref{remark_111}. Such equidistribution result was stated without proof in \cite[Remark 1.1]{MR2646758}. This measure has already occured in \cite{MR2646758}, where the author proves that the twisted normalized Kloosterman sums $\mathsf{Kl}_{p^n}(a,\chi)$ for a fixed $a$ in $\Z/p^n\Z$ and $\chi$ ranging over the Dirichlet characters of modulus $p^n$ get equidistributed with respect to $\mu$ as $p$ tends to infinity.
\end{remark}
\begin{remark}
It follows from the results proved in \cite{GoVeWa} that if $1\leq m\leq p^{n-1}$ then
\begin{equation*}
\frac{1}{\varphi(p^n)}\sum_{a\in\left(\mathbb{Z}/p^n\mathbb{Z}\right)^\times}\mathsf{Kl}_{p^n}(a,1)^m=\mathbb{E}(U^m)
\end{equation*}
where $U$ is any real-valued random variable of law the probability measure $\mu$, which agrees with \eqref{eq_usual_moment}.
\end{remark}
\begin{remark}
For any integer $r\geq 1$, any non-negative integers $m_1,\dots,m_r$ and any distinct integers $\tau_1,\dots,\tau_r$, the previous proposition implies that
\begin{equation*}
\frac{1}{\varphi(p^n)}\sum_{a\in\left(\mathbb{Z}/p^n\mathbb{Z}\right)^\times}\prod_{i=1}^r\mathsf{Kl}_{p^n}(a+\tau_i,b_0)^{m_i}=\mathbb{E}\left(\prod_{i=1}^rU_i^{m_i}\right)+O_{m_1+\dots+m_r,\epsilon}\left(p^{-\frac{4(n-1)}{2^n}+\epsilon}\right)
\end{equation*}
for any $\epsilon>0$ and for any sequence of real-valued independent random variables $\left(U_i\right)_{1\leq i\leq r}$ of law the probability measure $\mu$ under \eqref{eq_assump_p} provided that
\begin{equation*}
p>\max_{1\leq i,j\leq r}{\abs{\tau_i-\tau_j}}.
\end{equation*}
In other words, the $r$-tuple $\left(\mathsf{Kl}_{p^n}(a+\tau_i,b_0)\right)_{1\leq i\leq r}$ gets equidistributed in $[-2,2]^r$ with respect to the measure $\otimes^r\mu$.
\end{remark}
\begin{proof}[\proofname{} of proposition \ref{propo_shifted_Kl}]%
Firstly,
\begin{equation*}
\mathsf{Kl}_{p^n}(a+\tau,b_0)=\mathsf{Kl}_{p^n}(b_0a+b_0\tau,1)
\end{equation*}
since $b_0$ is coprime with $p$. The change of variable $a'=b_0a$ in $\mathsf{S}_{p^n}(\uple{\mu};b_0)$ combined with the change of multiplicities
\begin{equation*}
\mu(\tau)=\begin{cases}
\mu(\tau') & \text{if $\tau=b_0\tau'$,} \\
0 & \text{otherwise}
\end{cases}
\end{equation*}
for $\tau\in\Z/p^n\Z$ implies that one has to prove this proposition only for $b_0=1$. Thus, $b_0=1$ up to the completion of the proof.
\par
Let us come back to the moment $\mathsf{S}_{p^n}(\uple{\mu})$.
%
By Lemma \ref{lemma_kppm},
\begin{multline*}
\mathsf{S}_{p^n}(\uple{\mu})=\frac{1}{\varphi(p^n)}\sum_{\uple{b}\in\mathsf{B}_{p^n}(\uple{\mu})}\prod_{\tau\in\mathsf{T}(\uple{\mu})}\left(\frac{b_\tau}{p^n}\right)^{\mu(\tau)} \\
\sum_{\substack{a\in\Z/p^n\Z \\
\forall\tau\in\mathsf{T}(\uple{\mu}), a\equiv b_\tau^2-\tau\bmod{p}}}\prod_{\tau\in\mathsf{T}(\uple{\mu})}\left(2\cos{\left(\frac{4\pi s_{a+\tau,p^n}}{p^n}+\theta_{p^n}\right)}\right)^{\mu(\tau)}.
\end{multline*}
Recall that $s_{a+\tau,p^n}^2\equiv a+\tau\bmod{p^n}$. In addition, the second condition in \eqref{eq_deuxieme} is satisfied since $a$ has to be coprime with $p$. 
\par
Now, recall that\footnote{The referee kindly informed us that this expansion can be interpreted as an expansion in terms of Chebychev polynomials of the first kind, which are orthogonal polynomials for the measure $\mu$.}
\begin{equation*}
\left(2\cos{(x)}\right)^M=\sum_{m=0}^M\binom{M}{m}\cos{\left((M-2m)x\right)}
\end{equation*}
for any real number $x$ and any non-negative integer $M$. Thus,
\begin{multline}\label{eq_propre}
\mathsf{S}_{p^n}(\uple{\mu})=\frac{1}{\varphi(p^n)}\sum_{\uple{b}\in\mathsf{B}_{p^n}(\uple{\mu})}\prod_{\tau\in\mathsf{T}(\uple{\mu})}\left(\frac{b_\tau}{p^n}\right)^{\mu(\tau)} \\
\sum_{\substack{a\in\Z/p^n\Z \\
\forall\tau\in\mathsf{T}(\uple{\mu}), a\equiv b_\tau^2-\tau\bmod{p}}}\prod_{\tau\in\mathsf{T}(\uple{\mu})}\sum_{u_\tau=0}^{\mu(\tau)}\binom{\mu(\tau)}{u_\tau}\cos{\left[\left(\mu(\tau)-2u_\tau\right)\left(\frac{4\pi s_{a+\tau,p^n}}{p^n}+\theta_{p^n}\right)\right]}.
\end{multline}
\par
One can split $\mathsf{S}_{p^n}(\uple{\mu})$ into $\mathsf{S}_{p^n}(\uple{\mu})=\mathsf{MT}_{p^n}(\uple{\mu})+\mathsf{Err}_{p^n}(\uple{\mu})$ where
\begin{multline*}
\mathsf{MT}_{p^n}(\uple{\mu})\coloneqq\frac{1}{\varphi(p^n)}\sum_{\uple{b}\in\mathsf{B}_{p^n}(\uple{\mu})}\prod_{\tau\in\mathsf{T}(\uple{\mu})}\left(\frac{b_\tau}{p^n}\right)^{\mu(\tau)} \\
\sum_{\substack{a\in\Z/p^n\Z \\
\forall\tau\in\mathsf{T}(\uple{\mu}), a\equiv b_\tau^2-\tau\bmod{p}}}\prod_{\tau\in\mathsf{T}(\uple{\mu})}\sum_{2u_\tau=\mu(\tau)}\binom{\mu(\tau)}{u_\tau}\cos{\left[\left(\mu(\tau)-2u_\tau\right)\left(\frac{4\pi s_{a+\tau,p^n}}{p^n}+\theta_{p^n}\right)\right]}
\end{multline*}
and $\mathsf{Err}_{p^n}(\uple{\mu})$ is the remaining term. Note that $\mathsf{MT}_{p^n}(\uple{\mu})$ is nothing else than the term obtained when the multiplicities $\mu(\tau)$ are even and $u_\tau=\mu(\tau)/2$.
\par
Let us start with $\mathsf{MT}_{p^n}(\uple{\mu})$. Obviously, $\mathsf{MT}_{p^n}(\uple{\mu})=0$ unless
\begin{equation*}
\forall\tau\in\mathsf{T}(\uple{\mu}),\quad 2\mid\mu(\tau).
\end{equation*}
Hence
\begin{equation*}
\mathsf{MT}_{p^n}(\uple{\mu})=\left[\prod_{\tau\in\mathsf{T}(\uple{\mu})}\delta_{2\mid\mu(\tau)}\binom{\mu(\tau)}{\mu(\tau)/2}\right]\frac{\left\vert\mathsf{A}_{p^n}(\uple{\mu})\right\vert}{\varphi(p^n)}.
\end{equation*}
\par
Let us bound $\mathsf{Err}_{p^n}(\uple{\mu})$. Trivially,
\begin{equation*}
\mathsf{Err}_{p^n}(\uple{\mu})\ll_M\sup_{\substack{\uple{\ell}\in\prod_{\tau\in\mathsf{T}(\uple{\mu})}[-\mu(\tau),\mu(\tau)] \\
\uple{\ell}\neq\uple{0}}}\mathsf{Err}_{p^n}(\uple{\mu},\uple{\ell})
\end{equation*}
where
\begin{align*}
\mathsf{Err}_{p^n}(\uple{\mu},\uple{\ell}) & =\frac{1}{\varphi(p^n)}\sum_{\uple{b}\in\mathsf{B}_{p^n}(\uple{\mu})}\left\vert\sum_{\substack{a\in\Z/p^n\Z \\
\forall\tau\in\mathsf{T}(\uple{\mu}), a\equiv b_\tau^2-\tau\bmod{p}}}e_{p^n}\left(\sum_{\tau\in\mathsf{T}(\uple{\mu})}\ell_\tau s_{a+\tau,p^n}\right)\right\vert \\
& \coloneqq\frac{1}{\varphi(p^n)}\sum_{\uple{b}\in\mathsf{B}_{p^n}(\uple{\mu})}\left\vert\mathsf{Err}_{p^n}(\uple{\mu},\uple{\ell},\uple{b})\right\vert.
\end{align*}
for any $\abs{\mathsf{T}(\uple{\mu})}$-tuple $\uple{\ell}$ of integers with the properties written above.
\par
Let us fix from now on a $\abs{\mathsf{T}(\uple{\mu})}$-tuple $\uple{\ell}=\left(\ell_\tau\right)_{\tau\in\mathsf{T}(\uple{\mu})}$ of integers different from the tuple $\uple{0}$ and satisfying
\begin{equation}\label{eq_assump_l}
\forall\tau\in\mathsf{T}(\uple{\mu}),\quad\abs{\ell_\tau}\leq\mu(\tau)\leq M<p
\end{equation}
by \eqref{eq_assump_mu}. Let $\tau_0$ be a fixed element of $\mathsf{T}(\uple{\mu})$. For $\uple{b}$ in $\mathsf{B}_{p^n}(\uple{\mu})$, \eqref{eq_asump_1} implies that 
\begin{equation*}
\forall\tau\in\mathsf{T}(\uple{\mu}),\exists d_\tau\in\Z,\quad b_\tau^2-\tau=b_{\tau_0}^2-\tau_0-d_\tau p.
\end{equation*}
One gets a $\vert\mathsf{T}(\uple{\mu})\vert$-tuple $\uple{d}=\left(d_\tau\right)_{\tau\in\mathsf{T}(\uple{\mu})}$ of integers. The change of variables 
\begin{equation*}
a=b_{\tau_0^2}-\tau_0+up
\end{equation*}
with $u\bmod{p^{n-1}}$ so that
\begin{equation*}
a+\tau=b_{\tau_0}^2-\tau_0+up+\tau=b_\tau^2+(d_\tau+u)p
\end{equation*}
and \eqref{eq_sr} entail that
\begin{align*}
\mathsf{Err}_{p^n}(\uple{\mu},\uple{\ell},\uple{b}) & =\sum_{u\bmod{p^{n-1}}}e_{p^n}\left(\sum_{\tau\in\mathsf{T}(\uple{\mu})}\ell_{\tau}s_{b_\tau^2+(d_\tau+u)p,p^n}\right) \\
& =\sum_{u\bmod{p^{n-1}}}e_{p^n}\left(\sum_{\tau\in\mathsf{T}(\uple{\mu})}\ell_{\tau}b_\tau\sum_{m=0}^{n-1}c_m^\prime\overline{b_\tau}^{2m}p^m(d_\tau+u)^m\right) \\
& =\sum_{u\bmod{p^{n-1}}}e_{p^n}\left(P_{\uple{b}}(u)\right) \\
\end{align*}
where
\begin{equation}\label{eq_pol}
P_{\uple{b}}(u)\coloneqq\sum_{\tau\in\mathsf{T}(\uple{\mu})}\ell_{\tau}b_\tau\sum_{m=0}^{n-1}c_m^\prime\overline{b_\tau}^{2m}p^m(d_\tau+u)^m
\end{equation}
is a polynomial in the variable $u$ of degree less than $n-1$ with integer coefficients. Note that all the quantities defined here and below depend on the tuples $\uple{\mu}$, $\uple{\ell}$, $\uple{b}$ and $\uple{d}$ but we only state the dependence on $\uple{b}$ for simplicity. One can check that
\begin{equation*}
P_{\uple{b}}(u)=\sum_{j=0}^{n-1}a_j(\uple{b})u^j
\end{equation*}
where
\begin{equation*}
\forall j\in\{0,\dots,n-1\},\quad a_j(\uple{b})=\sum_{r=j}^{n-1}\binom{r}{j}c_r'm_{\uple{b}}(r,j)p^r
\end{equation*}
and
\begin{equation*}
\forall j\in\{0,\dots,n-1\}, \forall r\in\{j,\dots,n-1\},\quad m_{\uple{b}}(r,j)=\sum_{\tau\in\mathsf{T}(\uple{\mu})}\ell_\tau\overline{b_\tau}^{2r-1}d_\tau^{r-j}.
\end{equation*}
An important fact is that $p^j$ divides $a_j(\uple{b})$ for any $j\in\{0,\dots,n-1\}$. Note also that when $r=j$, the quantity $m_{\uple{b}}(r,j)$ gets simpler and does not depend on the tuple $\uple{d}$ since $m_{\uple{b}}(j,j)=m_{\uple{b},\uple{\ell}}(j,j)$ previously defined in \eqref{eq_mmm} for $1\leq j\leq n-1$. In particular,
\begin{equation*}
m_{\uple{b}}(1,1)=\sum_{\tau\in\mathsf{T}(\uple{\mu})}\ell_\tau\overline{b_\tau}.
\end{equation*} 
Let us define
\begin{equation*}
j(\uple{b})\coloneqq\sup{\left(\left\{j\in\{1,\dots,n-1\}, p^n\nmid a_j(\uple{b})\right\}\right)}\in\{1,\dots,n-1\}\cup\{-\infty\}.
\end{equation*}
Having this notation in mind,
\begin{equation*}
\mathsf{Err}_{p^n}(\uple{\mu},\uple{\ell},\uple{b})=\sum_{u\bmod{p^{n-1}}}e_{p^n}\left(\sum_{j=0}^{n-1}a_j(\uple{b})u^j\right).
\end{equation*}
This new polynomial in the exponential sum is still denoted by $P_{\uple{b}}(u)$ for simplicity, even though some terms are missing.
\par
The strategy to find an upper-bound for $\mathsf{Err}_{p^n}(\uple{\mu},\uple{\ell})$ is to decompose it into
\begin{multline}\label{eq_strategy}
\mathsf{Err}_{p^n}(\uple{\mu},\uple{\ell})=\frac{1}{\varphi(p^n)}\sum_{\substack{\uple{b}\in\mathsf{B}_{p^n}(\uple{\mu}) \\
j(\uple{b})=-\infty}}\left\vert\mathsf{Err}_{p^n}(\uple{\mu},\uple{\ell},\uple{b})\right\vert+\frac{1}{\varphi(p^n)}\sum_{\substack{\uple{b}\in\mathsf{B}_{p^n}(\uple{\mu}) \\
j(\uple{b})=1}}\left\vert\mathsf{Err}_{p^n}(\uple{\mu},\uple{\ell},\uple{b})\right\vert \\
+\frac{1}{\varphi(p^n)}\sum_{\substack{\uple{b}\in\mathsf{B}_{p^n}(\uple{\mu}) \\
j(\uple{b})\in\{2,\dots,n-1\}}}\left\vert\mathsf{Err}_{p^n}(\uple{\mu},\uple{\ell},\uple{b})\right\vert
\end{multline}
and to proceed as follows.
\begin{itemize}
\item
In the first term of \eqref{eq_strategy}, the exponential sum $\mathsf{Err}_{p^n}(\uple{\mu},\uple{\ell},\uple{b})$ is bounded trivially by $p^{n-1}$ but the counting of the tuples $\uple{b}$ is done carefully;
\item
In the third term of \eqref{eq_strategy}, Weyl's differencing process enables us to find an upper-bound for the exponential sum $\mathsf{Err}_{p^n}(\uple{\mu},\uple{\ell},\uple{b})$ and the counting of the tuples $\uple{b}$ is done trivially by $\ll p$. Note that this term only occurs if $n\geq 3$.
\item
In the second term of \eqref{eq_strategy}, both the exponential sum $\mathsf{Err}_{p^n}(\uple{\mu},\uple{\ell},\uple{b})$ and the counting of the tuples $\uple{b}$ are handled carefully.
\end{itemize}
Let us define $N=p^{n-1}$ for simplicity.
\par
Let us begin with the third term of \eqref{eq_strategy}. The purpose is to show that if $\uple{b}\in\mathsf{B}_{p^n}(\uple{\mu})$ with $j(\uple{b})\in\{2,\dots,n-1\}$ then
\begin{equation}\label{eq_third}
\mathsf{Err}_{p^n}(\uple{\mu},\uple{\ell},\uple{b})\ll_\epsilon p^{n-1-\frac{4(n-1)}{2^n}+\epsilon}
\end{equation}
for any $\epsilon>0$ and where the implied constant only depends on $\epsilon$. For these tuples $\uple{b}$, $P_{\uple{b}}(u)$ is a polynomial of degree $j(\uple{b})$ and leading coefficient divisible by $p^{j(\uple{b})}$. Let us define  $a_{j(\uple{b})}(\uple{b})=p^{k}\alpha_{j(\uple{b})}$ where $j(\uple{b})\leq k\leq n-1$ and $p\nmid \alpha_{j(\uple{b})}$. We are tempted to apply Weyl's differencing process (see \cite{Wey}). By \cite[Proposition 8.2]{IwKo}), one gets
\begin{equation}\label{eq_Weyl_n}
\left\vert\mathsf{Err}_{p^n}(\uple{\mu},\uple{\ell},\uple{b})\right\vert\leq 2N\times E^{2^{1-j(\uple{b})}}
\end{equation}
where
\begin{equation*}
E\coloneqq\frac{1}{N^{j(\uple{b})}}\sum_{-N<\ell_1,\dots,\ell_{j(\uple{b})-1}<N}\min{\left(N,\left\vert\left\vert\frac{\alpha_{j(\uple{b})}j(\uple{b})!\ell_1\dots\ell_{j(\uple{b})-1}}{p^{n-j(\uple{b})}}\right\vert\right\vert^{-1}\right)}.
\end{equation*}
As usual, $\vert\vert\ast\vert\vert$ stands for the distance to the nearest integer. The contribution to $\Sigma_{j(\uple{b})}$ of the integers satisfying $\ell_1\dots\ell_{j(\uple{b})-1}=0$ is trivially bounded by $1/N$. Up to this error term,
\begin{align*}
E & = \frac{1}{N^{j(\uple{b})}}\sum_{0\neq\abs{\ell}<N^{j(\uple{b})-1}}d_{j(\uple{b})-1}(\ell)\min{\left(N,\left\vert\left\vert\frac{\alpha_{j(\uple{b})}j(\uple{b})!\ell}{p^{n-k}}\right\vert\right\vert^{-1}\right)} \\
& =\frac{1}{N^{j(\uple{b})}}\sum_{i=0}^{(n-1)(j(\uple{b})-1)-1}\sum_{\substack{0\neq\abs{\ell}<N^{j(\uple{b})-1} \\
p^i\mid\mid\ell}}d_{j(\uple{b})-1}(\ell)\min{\left(N,\left\vert\left\vert\frac{\alpha_{j(\uple{b})}j(\uple{b})!\ell}{p^{n-k}}\right\vert\right\vert^{-1}\right)} \\
& =\frac{1}{N^{j(\uple{b})}}\sum_{i=0}^{(n-1)(j(\uple{b})-1)-1}\sum_{\substack{0\neq\abs{\ell}<\frac{N^{j(\uple{b})-1}}{p^i} \\
(p,\ell)=1}}d_{j(\uple{b})-1}(p^i\ell)\min{\left(N,\left\vert\left\vert\frac{\alpha_{j(\uple{b})}j(\uple{b})!\ell}{p^{n-k-i}}\right\vert\right\vert^{-1}\right)}.
\end{align*}
The contribution to $E$ of the non-negative integers $i$ less than $n-k-1$ can be written as
\begin{multline*}
\frac{1}{N^{j(\uple{b})}}\sum_{\substack{0\leq i\leq (n-1)(j(\uple{b})-1)-1 \\
i\leq n-k-1}}\sum_{\substack{\abs{v}\leq\left(p^{n-k-i}-1\right)/2 \\
(p,v)=1}} \\
\sum_{\substack{0\neq\abs{\ell}<\frac{N^{j(\uple{b})-1}}{p^i} \\
\ell\equiv\overline{\alpha_{j(\uple{b})}j(\uple{b})!}v\bmod{p^{n-k-i}}}}d_{j(\uple{b})-1}(p^i\ell)\min{\left(N,\left\vert\left\vert\frac{v}{p^{n-k-i}}\right\vert\right\vert^{-1}\right)}
\end{multline*}
and is bounded by $(pN)^\epsilon /N$. The contribution of the remaining integers $i$ is trivially bounded by $(pN)^\epsilon/p^{n-k}$, which is less than $(pN)^\epsilon /N$. As a consequence,
\begin{equation*}
\left\vert\mathsf{Err}_{p^n}(\uple{\mu},\uple{\ell},\uple{b})\right\vert\ll_\epsilon (pN)^\epsilon N^{1-2^{1-j(\uple{b})}}\leq  (pN)^\epsilon N^{1-2^{2-n}},
\end{equation*}
which implies \eqref{eq_third}.
\par
About the first term of of \eqref{eq_strategy}, let us show that
\begin{equation}\label{eq_first}
\frac{1}{\varphi(p^n)}\sum_{\substack{\uple{b}\in\mathsf{B}_{p^n}(\uple{\mu}) \\
j(\uple{b})=-\infty}}\left\vert\mathsf{Err}_{p^n}(\uple{\mu},\uple{\ell},\uple{b})\right\vert\ll\frac{p^{n-1}}{\varphi(p^n)}\mathsf{N}(\uple{\mu},\uple{\ell};0)
\end{equation}
where $\mathsf{N}(\uple{\mu},\uple{\ell};w)$ is defined in \eqref{eq_tricky_counting} for any $w$ modulo $p$. The exponential sum in \eqref{eq_strategy} is trivially bounded by $p^{n-1}$. Now, let $\uple{b}$ in $\mathsf{B}_{p^n}(\uple{\mu})$ with $j(\uple{b})=-\infty$. If $1\leq j\leq n-1$ then $p^n$ divides $a_j(\uple{b})$, which implies that
\begin{equation*}
c_j^\prime m_{\uple{b}}(j,j)\equiv 0\bmod{p}
\end{equation*}
and $m_{\uple{b}}(j,j)\equiv 0\bmod{p}$ since $c_j^\prime$ is coprime with $p$ for $p\geq 2n-5$ by \eqref{eq_vp_cm'}. This implies \eqref{eq_first}.
\par
Finally, let us prove that
\begin{equation}\label{eq_second}
\frac{1}{\varphi(p^n)}\sum_{\substack{\uple{b}\in\mathsf{B}_{p^n}(\uple{\mu}) \\
j(\uple{b})=1}}\left\vert\mathsf{Err}_{p^n}(\uple{\mu},\uple{\ell},\uple{b})\right\vert\ll\frac{1}{\varphi(p^n)}\sum_{k=1}^{n-1}p^{n-k}\sum_{\substack{v\bmod{p^{n-k}} \\
(p,v)=1}}\frac{1}{\abs{v}}\mathsf{N}\left(\uple{\mu},\uple{\ell};\overline{c_1'}vp^{k-1}\right).
\end{equation}
For these tuples $\uple{b}$, $P_{\uple{b}}(u)$ is a polynomial of degree $1$ and leading coefficient divisible by $p$. By \cite[Equation (8.6)]{IwKo},
\begin{equation}\label{eq_Weyl_1}
\left\vert\mathsf{Err}_{p^n}(\uple{\mu},\uple{\ell},\uple{b})\right\vert\leq\frac{1}{2}\min{\left(2N,\left\vert\left\vert\frac{a_{1}(\uple{b})}{p^{n}}\right\vert\right\vert^{-1}\right)}
\end{equation}
so that
\begin{align*}
\sum_{\substack{\uple{b}\in\mathsf{B}_{p^n}(\uple{\mu}) \\
j(\uple{b})=1}}\left\vert\mathsf{Err}_{p^n}(\uple{\mu},\uple{\ell},\uple{b})\right\vert & \leq\sum_{k=1}^{n-1}\sum_{\substack{\uple{b}\in\mathsf{B}_{p^n}(\uple{\mu}) \\
p^k\mid\mid a_1(\uple{b})}}\left\vert\left\vert\frac{a_{1}(\uple{b})/p^{k}}{p^{n-k}}\right\vert\right\vert^{-1} \\
& =\sum_{k=1}^{n-1}\sum_{\substack{v\bmod{p^{n-k}} \\
(p,v)=1}}\sum_{\substack{\uple{b}\in\mathsf{B}_{p^n}(\uple{\mu}) \\
a_1(\uple{b})/p^k\equiv v\bmod{p^{n-k}}}}\left\vert\left\vert\frac{v}{p^{n-k}}\right\vert\right\vert^{-1} \\
& =\sum_{k=1}^{n-1}p^{n-k}\sum_{\substack{v\bmod{p^{n-k}} \\
(p,v)=1}}\frac{1}{\abs{v}}\sum_{\substack{\uple{b}\in\mathsf{B}_{p^n}(\uple{\mu}) \\
a_1(\uple{b})/p^k\equiv v\bmod{p^{n-k}}}}1.
\end{align*}
Now, if $\uple{b}$ in $\mathsf{B}_{p^n}(\uple{\mu})$ satisfies $a_1(\uple{b})/p^k\equiv v\bmod{p^{n-k}}$ then this implies $c_1^\prime m_{\uple{b}}(1,1)\equiv a_1(\uple{b})/p\equiv vp^{k-1}\bmod{p}$ with $c_1^\prime$ coprime with $p$ by \eqref{eq_vp_cm'}. This is exactly \eqref{eq_second}.
\par
By \eqref{eq_strategy}, \eqref{eq_third}, \eqref{eq_third} and \eqref{eq_first}, one gets
\begin{multline}\label{eq_strategy_2}
\mathsf{Err}_{p^n}(\uple{\mu},\uple{\ell})\ll_{\epsilon}p^{-\frac{4(n-1)}{2^n}+\epsilon}+\frac{\mathsf{N}(\uple{\mu},\uple{\ell};0)}{p}+\sum_{k=1}^{n-1}\frac{1}{p^k}\sum_{\substack{v\bmod{p^{n-k}} \\
(p,v)=1}}\frac{1}{\abs{v}}\mathsf{N}\left(\uple{\mu},\uple{\ell};\overline{c_1'}vp^{k-1}\right)
\end{multline}
for any $\epsilon>0$. Everything boils down to bounding $\mathsf{N}(\uple{\mu},\uple{\ell};w)$ uniformly with respect to $w\bmod{p}$. Proposition \ref{propo_tricky_counting} implies that
\begin{equation*}
\mathsf{Err}_{p^n}(\uple{\mu},\uple{\ell})\ll_{\epsilon}p^{-\frac{4(n-1)}{2^n}+\epsilon}
\end{equation*}
for any $\epsilon>0$.
\end{proof}
The following lemma will be used in the proof of Proposition \ref{propo_moment_tilde}.
\begin{lemma}\label{lemma_noncoprim}
Let $M\geq 2$ be an integer. If $a_h$ is a sequence of real numbers indexed by non-negative integers satisfying
\begin{equation*}
\forall h\in\N,\quad 0\leq a_h\leq\begin{cases}
1 & \text{if $h=0$,} \\
\frac{1}{h} & \text{otherwise}
\end{cases}
\end{equation*}
then
\begin{equation*}
\Sigma_M=\sum_{\substack{0\leq h_1,\dots, h_M\leq p^n \\
\exists i\neq j, h_{i}\equiv h_j\bmod{p} \\
h_i\neq h_j}}\prod_{i=1}^{M}a_{h_i}\ll\frac{\log^M{\left(p^n\right)}}{p}.
\end{equation*}
\end{lemma}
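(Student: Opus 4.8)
The plan is to reduce the estimate to the case $M=2$ and then to extract the saving of $1/p$ from the distinctness condition $h_i\neq h_j$ inside a fixed residue class modulo $p$. Throughout, write $\ell\coloneqq\log(p^n)$. Only two elementary inputs are needed. First, the hypothesis on $(a_h)_h$ gives
\[
\sum_{0\leq h\leq p^n}a_h\leq 1+\sum_{1\leq h\leq p^n}\frac1h\ll\ell .
\]
Second, for any residue $r\in\{0,\dots,p-1\}$, since the integers $r+p,r+2p,\dots$ are all positive,
\[
\sum_{\substack{j\geq 1\\ r+jp\leq p^n}}a_{r+jp}\leq\frac1p\sum_{1\leq j\leq p^{n-1}}\frac1j\ll\frac{\ell}{p}.
\]

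First I would perform the reduction. The condition defining $\Sigma_M$ is the union, over unordered pairs $\{i,j\}$, of the conditions ``$h_i\equiv h_j\bmod p$ and $h_i\neq h_j$''; since all $a_h$ are non-negative, $\Sigma_M$ is at most the sum of the $\binom{M}{2}$ corresponding subsums. In each such subsum the $M-2$ variables $h_k$ with $k\notin\{i,j\}$ are unconstrained and contribute a factor $\bigl(\sum_{0\leq h\leq p^n}a_h\bigr)^{M-2}\ll\ell^{M-2}$, so it suffices to show
\[
\Sigma_2\coloneqq\sum_{\substack{0\leq h_1,h_2\leq p^n\\ h_1\equiv h_2\bmod p,\ h_1\neq h_2}}a_{h_1}a_{h_2}\ll\frac{\ell^2}{p},
\]
for then $\Sigma_M\leq\binom{M}{2}\,\ell^{M-2}\,\Sigma_2\ll\ell^{M}/p$, which is the claim (with an implied constant depending on $M$, as in its later use).

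Next I would estimate $\Sigma_2$ by fixing $h_1$, setting $r=h_1\bmod p$, and summing over $h_2$: the admissible $h_2$ run through the progression $r,r+p,r+2p,\dots$ inside $[0,p^n]$, with the value $h_1$ deleted. By the second input above, the terms with $j\geq 1$ sum to $O(\ell/p)$, so the only possibly large surviving term is $a_r$, and it survives exactly when $h_1\neq r$, i.e.\ when $h_1\geq p$. Hence the inner sum over $h_2$ is $\ll a_r\,\delta_{h_1\geq p}+\ell/p$. Multiplying by $a_{h_1}$ and summing over $h_1$: the $\ell/p$ part contributes $\ll(\ell/p)\sum_{h_1}a_{h_1}\ll\ell^2/p$, while the $a_r\,\delta_{h_1\geq p}$ part equals
\[
\sum_{r=0}^{p-1}a_r\sum_{\substack{j\geq 1\\ r+jp\leq p^n}}a_{r+jp}\ll\frac{\ell}{p}\sum_{r=0}^{p-1}a_r\ll\frac{\ell^2}{p},
\]
again by the two inputs. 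This gives $\Sigma_2\ll\ell^2/p$ and finishes the argument.

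The entire content of the lemma sits in that one inner-sum bound, and that is where I expect the only real subtlety: because $h_1\neq h_2$, the two weights $a_{h_1},a_{h_2}$ cannot both be the leading term of their common arithmetic progression modulo $p$, so at least one of them lies in the tail, whose total weight is $\ll\ell/p$ --- this is precisely what turns the trivial bound $\ell^M$ into $\ell^M/p$. Care is needed only in bookkeeping the residue class $r=0$, where $a_0$ may be as large as $1$; but there too it is multiplied by a tail factor, so nothing goes wrong. The rest is the harmonic-sum bound $\sum_{1\leq h\leq p^n}h^{-1}\ll\ell$ together with a union bound over the $\binom{M}{2}$ coinciding pairs, and I anticipate no genuine obstacle.
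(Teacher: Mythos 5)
Your proof is correct, but it takes a genuinely different route from the paper's. The paper proceeds by induction on $M$: the case $M=2$ is computed directly, and for $M\geq 3$ it invokes the combinatorial identity of \cite[Lemma 7.1]{MR3248485} to rewrite $\Sigma_M$ as a sum over tuples of \emph{distinct} values $h_1,\dots,h_s$ weighted by powers $a_{h_u}^{\sigma_u}$, then splits that sum into three pieces (a value equal to $0$ together with another divisible by $p$, a value equal to $0$ with two other values congruent modulo $p$, all values nonzero with two congruent modulo $p$), each bounded trivially or by the induction hypothesis. You instead note that the summation condition is the union over the $\binom{M}{2}$ pairs $\{i,j\}$ of the events $h_i\equiv h_j\bmod p$, $h_i\neq h_j$, so non-negativity gives
\begin{equation*}
\Sigma_M\leq\binom{M}{2}\Bigl(\sum_{0\leq h\leq p^n}a_h\Bigr)^{M-2}\Sigma_2\ll_M\log^{M-2}\left(p^n\right)\,\Sigma_2,
\end{equation*}
reducing everything to the two-variable bound $\Sigma_2\ll\log^2\left(p^n\right)/p$; your estimate of $\Sigma_2$, isolating in the progression $r,r+p,r+2p,\dots$ the initial term $a_r$ (which can only arise when $h_1\geq p$, and is then paired with a tail of total weight $\ll\log\left(p^n\right)/p$) from the tail itself, is sound, including the class $r=0$ where $a_0$ may be of size $1$. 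Your route is more elementary --- no external combinatorial identity, no induction --- and yields the same bound with an implied constant depending on $M$, which is exactly what the paper's own proof gives and what the application in Proposition \ref{propo_moment_tilde} requires.
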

\begin{proof}[\proofname{} of lemma \ref{lemma_noncoprim}]%
Let us proceed by induction on $M$. If $M=2$ then
\begin{equation*}
\Sigma_2=2a_0\sum_{\substack{1\leq h\leq p^n \\
p\mid h}}a_h+\sum_{\substack{1\leq h_1, h_2\leq p^n \\
h_1\equiv h_2\bmod{p}}}a_{h_1}a_{h_2}\ll\frac{\log{\left(p^{n-1}\right)}}{p}+\frac{\log{\left(p^{n}\right)}\log{\left(p^{n-1}\right)}}{p}.
\end{equation*}
\par
Let us assume that $M\geq 3$. We use the combinatorial identity given in \cite[Lemma 7.1]{MR3248485}, which entails that
\begin{equation*}
\Sigma_M=\sum_{s=1}^M\sum_{\sigma\in P(M,s)}\sum_{\substack{0\leq h_1,\dots,h_s\leq p^n \\
\exists i\neq j, h_{i}\equiv h_j\bmod{p} \\
h_i\neq h_j}}\prod_{u=1}^{s}a_{h_u}^{\sigma_u}
\end{equation*}
where
\begin{equation*}
\forall u\in\{1,\dots,s\},\quad \sigma_u\coloneqq\left\vert\sigma^{-1}\left(\{u\}\right)\right\vert
\end{equation*}
and for $1\leq s\leq M$, $P(M,s)$ stands for the set of surjective functions
\begin{equation*}
\sigma:\left\{1,\dots,M\right\}\to\left\{1,\dots,s\right\}
\end{equation*}
satisfying
\begin{equation*}
\forall j\in\left\{1,\dots,M\right\},\quad\sigma(j)=1\quad\text{ or }\quad\exists k<j, \sigma(j)=\sigma(k)+1.
\end{equation*}
\par
The sum over $h_1,\dots,h_s$ can be decomposed into
\begin{equation*}
\sum_{\substack{0\leq h_1,\dots,h_s\leq p^n \\
h_1,\dots,h_s \text{ distinct} \\
\exists i_0, h_{i_0}=0 \\
\exists j\neq i_0, h_{j}\equiv 0\bmod{p}}}\prod_{u=1}^{s}a_{h_u}^{\sigma_u}+\sum_{\substack{0\leq h_1,\dots,h_s\leq p^n \\
h_1,\dots,h_s \text{ distinct} \\
\exists i_0, h_{i_0}=0 \\
\forall i\neq i_0, p\nmid h_i \\
\exists i\neq j\neq i_0, h_{i}\equiv h_j\bmod{p}}}\prod_{u=1}^{s}a_{h_u}^{\sigma_u}+\sum_{\substack{1\leq h_1,\dots,h_s\leq p^n \\
h_1,\dots,h_s \text{ distinct} \\
\exists i\neq j, h_{i}\equiv h_j\bmod{p}}}\prod_{u=1}^{s}a_{h_u}^{\sigma_u}.
\end{equation*}
The first sum is trivially bounded whereas the second and third sums are bounded by induction. This gives
\begin{multline*}
\Sigma_M\ll\sum_{s=1}^M\sum_{\sigma\in P(M,s)}\left(\frac{\log^{s-2}{\left(p^n\right)}\log{\left(p^{n-1}\right)}}{p}+\frac{\log^{s-2}{\left(p^n\right)}\log{\left(p^{n-1}\right)}}{p}\right. \\
\left.+\frac{\log^{s-1}{\left(p^n\right)}\log{\left(p^{n-1}\right)}}{p}\right),
\end{multline*}
which ensures the result.
\end{proof}
Let us give now the proof of Proposition \ref{propo_moment_tilde}
\begin{proof}[\proofname{} of proposition \ref{propo_moment_tilde}]%
By Lemma \ref{lemma_step_1}, it is enough to consider $\widetilde{\mathsf{M}_{p^n}}(\uple{t};\uple{m},\uple{n},b_0)$.
\par
Recall that $H_{p^n}=\left\{(1-p^n)/2,\dots,(p^n-1)/2\right\}$.
\par
By \eqref{eq_KL_tilde},
\begin{multline*}
\widetilde{\mathsf{M}_{p^n}}(\uple{t};\uple{m},\uple{n};b_0)=\frac{1}{p^{n\ell(\uple{m}+\uple{n})/2}\varphi(p^n)}\sum_{a\in\left(\mathbb{Z}/p^n\mathbb{Z}\right)^\times}  \\
\prod_{i=1}^k\left(\sum_{u_i\in H_{p^n}}\overline{\alpha_{p^n}(u_i;t_i)}\mathsf{Kl}_{p^n}(a-u_i,b_0)\right)^{m_i}\left(\sum_{v_i\in H_{p^n}}\alpha_{p^n}(v_i;t_i)\mathsf{Kl}_{p^n}(a-v_i,b_0)\right)^{n_i}
\end{multline*}
since the complete Kloosterman sums are real numbers. Expanding the powers, one gets
\begin{multline*}
\widetilde{\mathsf{M}_{p^n}}(\uple{t};\uple{m},\uple{n};b_0)=\frac{1}{p^{n\ell(\uple{m}+\uple{n})/2}\varphi(p^n)}\sum_{a\in\left(\mathbb{Z}/p^n\mathbb{Z}\right)^\times}\prod_{i=1}^k\sum_{\uple{u}_i=\left(u_{i,1},\dots,u_{i,m_i}\right)\in H_{p^n}^{m_i}}\sum_{\uple{v}_i=\left(v_{i,1},\dots,v_{i,n_i}\right)\in H_{p^n}^{n_i}} \\
\prod_{e_i=1}^{m_i}\overline{\alpha_{p^n}(u_{i,e_i};t_i)}\mathsf{Kl}_{p^n}(a-u_{i,e_i},b_0)\prod_{f_i=1}^{n_i}\alpha_{p^n}(v_{i,f_i};t_i)\mathsf{Kl}_{p^n}(a-v_{i,f_i},b_0).
\end{multline*}
Let us set for $1\leq i\leq k$,
\begin{equation*}
\uple{h}_i=\left(h_{i,1},\dots,h_{i,m_i},h_{i,m_i+1},\dots,h_{i,m_i+n_i}\right)=\left(u_{i,1},\dots,u_{i,m_i},v_{i,1},\dots,v_{i,n_i}\right)\in H_{p^n}^{m_i+n_i}
\end{equation*}
and
\begin{equation*}
\uple{h}=(\uple{h}_1,\dots,\uple{h}_k)\in H_{p^n}^{\ell(\uple{m}+\uple{n})}.
\end{equation*}
Exchanging the order of summations, one is led to
\begin{multline*}
\widetilde{\mathsf{M}_{p^n}}(\uple{t};\uple{m},\uple{n};b_0)=\frac{1}{p^{n\ell(\uple{m}+\uple{n})/2}}\sum_{\uple{h}\in H_{p^n}^{\ell(\uple{m}+\uple{n})}}\prod_{i=1}^k\prod_{j=1}^{m_i}\overline{\alpha_{p^n}(h_{i,j};t_i)}\prod_{j=m_i+1}^{m_i+n_i}\alpha_{p^n}(h_{i,j};t_i) \\
\frac{1}{\varphi(p^n)}\sum_{a\in\left(\mathbb{Z}/p^n\mathbb{Z}\right)^\times}\prod_{i=1}^k\prod_{j=1}^{m_i+n_i}\mathsf{Kl}_{p^n}(a-h_{i,j},b_0).
\end{multline*}
\par
By Lemma \ref{lemma_noncoprim} and \eqref{eq_bound_alpha}, the contribution of the tuples $\uple{h}$ different from the tuple $\uple{0}$ and whose components are not distinct modulo $p$ is bounded by
\begin{equation}\label{eq_not_coprime_p}
\ll_{\ell(\uple{m}+\uple{n})}\frac{\log^{\ell(\uple{m}+\uple{n})}\left(p^n\right)}{p}
\end{equation} 
where the implied constant only depends on $\ell(\uple{m}+\uple{n})$.
\par
Thus, up to all the previous error terms,
\begin{multline}\label{eq_caca}
\widetilde{\mathsf{M}_{p^n}}(\uple{t};\uple{m},\uple{n};b_0)=\frac{1}{p^{n\ell(\uple{m}+\uple{n})/2}}\sum_{\uple{h}\in H_{p^n}^{\ell(\uple{m}+\uple{n})}}^\ast\prod_{i=1}^k\prod_{j=1}^{m_i}\overline{\alpha_{p^n}(h_{i,j};t_i)}\prod_{j=m_i+1}^{m_i+n_i}\alpha_{p^n}(h_{i,j};t_i) \\
\frac{1}{\varphi(p^n)}\sum_{a\in\left(\mathbb{Z}/p^n\mathbb{Z}\right)^\times}\prod_{i=1}^k\prod_{j=1}^{m_i+n_i}\mathsf{Kl}_{p^n}(a-h_{i,j},b_0)
\end{multline}
where the $\ast$ means that the summation is over the tuples $\uple{h}=\left(h_{i,j}\right)_{\substack{1\leq i\leq k \\
1\leq j\leq m_i}}$ whose components are either equal or distinct modulo $p$, namely
\begin{equation*}
h_{i,j}=h_{k,\ell}\quad\text{ or }\quad p\nmid h_{i,j}-h_{k,\ell}
\end{equation*}
for any $(i,j)\neq(k,\ell)$ in the relevant ranges.
\par
Note that by \eqref{eq_Spmu},
\begin{equation*}
\frac{1}{\varphi(p^n)}\sum_{a\in\left(\mathbb{Z}/p^n\mathbb{Z}\right)^\times}\prod_{i=1}^k\prod_{j=1}^{m_i+n_i}\mathsf{Kl}_{p^n}(a-h_{i,j},b_0)=\mathsf{S}_{p^n}(\uple{\mu_{\uple{h}}};b_0)\\ 
\end{equation*}
where $\uple{\mu_{\uple{h}}}=\left(\mu_{\uple{h}}(\tau)\right)_{\tau\in\mathbb{Z}/p^n\mathbb{Z}}$ is the $p^n$-tuple of non-negative integers defined by
\begin{equation*}
\forall\tau\in\mathbb{Z}/p^n\mathbb{Z},\quad\mu_{\uple{h}}(\tau)\coloneqq\sum_{i=1}^k\left\vert\left\{j\in\{1,\dots,m_i+n_i\}, -h_{i,j}\equiv\tau\bmod{p^n}\right\}\right\vert.
\end{equation*}
Note also that
\begin{equation*}
\sum_{\tau\in\Z/p^n\Z}\mu_{\uple{h}}(\tau)=\ell(\uple{m}+\uple{n})
\end{equation*}
so that 
\begin{equation*}
\left\vert\left\{\tau\bmod{p}, \tau\in\Z/p^n\Z, \mu_{\uple{h}}(\tau)\geq 1\right\}\right\vert=\left\vert\left\{\tau\in\Z/p^n\Z, \mu_{\uple{h}}(\tau)\geq 1\right\}\right\vert\leq\ell(\uple{m}+\uple{n})
\end{equation*}
according to the property satisfied by the relevant tuples $\uple{h}$ in \eqref{eq_caca}.
\par
Hence, one can apply Proposition \ref{propo_shifted_Kl}. By \eqref{eq_bound_alpha}, the contribution of the error term is bounded by
\begin{equation*}
\ll_{\ell(\uple{m}+\uple{n}),\epsilon}\log^{\ell(\uple{m}+\uple{n})}\left(p^n\right)p^{-\frac{4(n-1)}{2^n}+\epsilon}
\end{equation*}
for any $\epsilon>0$, where the implied constant only depends on $\ell(\uple{m}+\uple{n})$ and $\epsilon$. Thus, up to the previous error terms,
\begin{multline*}
\widetilde{\mathsf{M}_{p^n}}(\uple{t};\uple{m},\uple{n};b_0)=\frac{1}{p^{n\ell(\uple{m}+\uple{n})/2}}\sum_{\uple{h}\in H_{p^n}^{\ell(\uple{m}+\uple{n})}}\prod_{i=1}^k\prod_{j=1}^{m_i}\overline{\alpha_{p^n}(h_{i,j};t_i)}\prod_{j=m_i+1}^{m_i+n_i}\alpha_{p^n}(h_{i,j};t_i) \\
\left[\prod_{\tau\in \mathbb{Z}/p^n\mathbb{Z}}\delta_{2\mid\mu_{\uple{h}}(\tau)}\binom{\mu_{\uple{h}}(\tau)}{\mu_{\uple{h}}(\tau)/2}\right]\frac{\left\vert\mathsf{A}_{p^n}(\mu_{\uple{h}}(\tau))\right\vert}{\varphi(p^n)}
\end{multline*}
where $\mathsf{A}_{p^n}(\mu_{\uple{h}}(\tau))$ is defined in \eqref{eq_apmu}.
\par
Let us apply Proposition \ref{propo_cardinality}. By \eqref{eq_bound_alpha}, the contribution of the error term is bounded by
\begin{equation*}
\ll_{\ell(\uple{m}+\uple{n})}\frac{\log^{\ell(\uple{m}+\uple{n})}\left(p^n\right)}{\sqrt{p}}
\end{equation*}
where the implied constant only depends on $\ell(\uple{m}+\uple{n})$ and, up to all the previous error terms,
\begin{multline*}
\widetilde{\mathsf{M}_{p^n}}(\uple{t};\uple{m},\uple{n};b_0)=\frac{1}{p^{n\ell(\uple{m}+\uple{n})/2}}\sum_{\uple{h}\in H_{p^n}^{\ell(\uple{m}+\uple{n})}}\prod_{i=1}^k\prod_{j=1}^{m_i}\overline{\alpha_{p^n}(h_{i,j};t_i)}\prod_{j=m_i+1}^{m_i+n_i}\alpha_{p^n}(h_{i,j};t_i) \\
\left[\prod_{\tau\in \mathbb{Z}/p^n\mathbb{Z}}\delta_{2\mid\mu_{\uple{h}}(\tau)}\binom{\mu_{\uple{h}}(\tau)}{\mu_{\uple{h}}(\tau)/2}\right]\frac{1}{2^{\abs{\mathsf{T}(\uple{\mu_{\uple{h}}})}}}.
\end{multline*}
It should be pointed out that the fact that
\begin{equation*}
\left\vert\mathsf{T}(\uple{\mu_{\uple{h}}})\right\vert=\left\vert\overline{\mathsf{T}}(\uple{\mu_{\uple{h}}})\right\vert
\end{equation*}
is crucial since recognizing the moments of the measure $\mu$ requires the left-hand side of the previous equation whereas the right-hand side appears by Proposition \ref{propo_cardinality}.
\par
By \eqref{eq_mu_1},
\begin{multline*}
\widetilde{\mathsf{M}_{p^n}}(\uple{t};\uple{m},\uple{n};b_0)=\frac{1}{p^{n\ell(\uple{m}+\uple{n})/2}}\sum_{\uple{h}\in H_{p^n}^{\ell(\uple{m}+\uple{n})}}^{\ast}\prod_{i=1}^k\prod_{j=1}^{m_i}\overline{\alpha_{p^n}(h_{i,j};t_i)}\prod_{j=m_i+1}^{m_i+n_i}\alpha_{p^n}(h_{i,j};t_i) \\
\mathbb{E}\left(\prod_{i=1}^{k}\prod_{j=1}^{m_i+n_i}U_{h_{i,j}}\right)
\end{multline*}
for any finite sequence of real-valued independent random variables $\left(U_h\right)_{h\in\Z/p^n\Z}$ of law the probability measure $\mu$ defined in \eqref{eq_def_mu_1}.  The fact that
\begin{equation*}
\mathbb{E}\left(\prod_{i=1}^{k}\prod_{j=1}^{m_i+n_i}U_{h_{i,j}}\right)=\mathbb{E}\left(\prod_{\tau\in\mathbb{Z}/p^n\mathbb{Z}}U_{\tau}^{\mu(\tau)}\right)
\end{equation*}
has also been used. One can add the missing tuples $\uple{h}$ at the admissible cost given in \eqref{eq_not_coprime_p} so that, up to all the previous error terms,
\begin{multline*}
\widetilde{\mathsf{M}_{p^n}}(\uple{t};\uple{m},\uple{n};b_0)=\frac{1}{p^{n\ell(\uple{m}+\uple{n})/2}}\sum_{\uple{h}\in H_{p^n}^{\ell(\uple{m}+\uple{n})}}\prod_{i=1}^k\prod_{j=1}^{m_i}\overline{\alpha_{p^n}(h_{i,j};t_i)}\prod_{j=m_i+1}^{m_i+n_i}\alpha_{p^n}(h_{i,j};t_i) \\
\mathbb{E}\left(\prod_{i=1}^{k}\prod_{j=1}^{m_i+n_i}U_{h_{i,j}}\right).
\end{multline*}
\par
Let us approximate the coefficients $\alpha_{p^n}(h;t)$. By \eqref{eq_approx_alpha} and \eqref{eq_bound_alpha}, one gets, up to all the previous error terms,
\begin{multline*}
\widetilde{\mathsf{M}_{p^n}}(\uple{t};\uple{m},\uple{n};b_0)=\frac{1}{p^{n\ell(\uple{m}+\uple{n})/2}}\sum_{\uple{h}\in H_{p^n}^{\ell(\uple{m}+\uple{n})}}\prod_{i=1}^k\prod_{j=1}^{m_i}\overline{\beta(h_{i,j};t_i)}\prod_{j=m_i+1}^{m_i+n_i}\beta(h_{i,j};t_i) \\
\mathbb{E}\left(\prod_{i=1}^{k}\prod_{j=1}^{m_i+n_i}U_{h_{i,j}}\right)+O_{\ell(\uple{m}+\uple{n})}\left(\frac{\log^{\ell(\uple{m}+\uple{n})-1}\left(p^n\right)}{p^n}\right).
\end{multline*}
Reverting the computation done at the very beginning of the proof of this proposition, one is led to
\begin{equation*}
\widetilde{\mathsf{M}_{p^n}}(\uple{t};\uple{m},\uple{n};b_0)=\mathbb{E}\left(\prod_{i=1}^{k}\overline{\mathsf{Kl}_{\frac{p^n-1}{2}}(t_i)}^{m_i}\mathsf{Kl}_{\frac{p^n-1}{2}}(t_i)^{n_i}\right)
\end{equation*}
up to all the previous error terms and where
\begin{equation*}
\mathsf{Kl}_{\frac{p^n-1}{2}}(t;\ast)=\sum_{\abs{h}\leq\frac{p^n-1}{2}}\beta(h;t)U_h(\ast).
\end{equation*}
\par
Finally, by \eqref{eq_supnorm} and \eqref{eq_L1} in Proposition \ref{propo_kahane}, up to all the previous error terms,
\begin{equation*}
\widetilde{\mathsf{M}_{p^n}}(\uple{t};\uple{m},\uple{n};b_0)=\mathbb{E}\left(\prod_{i=1}^{k}\overline{\mathsf{Kl}(t_i)}^{m_i}\mathsf{Kl}(t_i)^{n_i}\right)+O_{\ell(\uple{m}+\uple{n})}\left(\frac{\log^{\ell(\uple{m}+\uple{n})}\left(p^n\right)}{p^{n/2}}\right)
\end{equation*}
where
\begin{equation*}
\mathsf{Kl}(t;\ast)=\sum_{h\in\Z}\beta(h;t)U_h(\ast)
\end{equation*}
for any sequence of real-valued independent random variables $\left(U_h\right)_{h\in\Z}$ of law the probability measure $\mu$.
\end{proof}
\section{The tightness condition}\label{sec_tight}%
\subsection{The counting ingredient}%
The following lemma states, without any proof, the version of Hensel's lemma, which will be used in the proof of Lemma \ref{lemma_Hensel_2}. This result is so standard that we do not give any reference too.
\begin{lemma}[Hensel's lemma]\label{lemma_Hensel}
Let $k$ be a positive integer and $f$ be a polynomial with integer coefficients. Assume that $x_0$ is a solution modulo $p^k$ of the congruence $f(x)\equiv 0\bmod{p^k}$.
\begin{itemize}
\item
If $p\nmid f'(x_0)$ then there is exactly one solution modulo $p^{k+1}$ of the congruence $f(x)\equiv 0\bmod{p^{k+1}}$ congruent to $x_0$ modulo $p^k$.
\item
If $p\mid f'(x_0)$ and $f(x_0)\equiv 0\bmod{p^{k+1}}$ then there are exactly $p$ solutions modulo $p^{k+1}$ of the congruence $f(x)\equiv 0\bmod{p^{k+1}}$ congruent to $x_0$ modulo $p^k$. They are given by $x_0+p^kj$ for $j$ modulo $p$.
\end{itemize} 
\end{lemma}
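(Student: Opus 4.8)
The plan is to run the standard Taylor--Hensel lifting argument. Note first that every residue class modulo $p^{k+1}$ reducing to $x_0$ modulo $p^k$ is of the shape $x_0+p^k t$ with $t$ ranging over a complete system of residues modulo $p$, and that $x_0+p^k t_1\equiv x_0+p^k t_2\bmod{p^{k+1}}$ if and only if $t_1\equiv t_2\bmod{p}$. Hence the whole statement reduces to counting the $t\bmod{p}$ for which $f(x_0+p^k t)\equiv 0\bmod{p^{k+1}}$.

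The key computation is the Taylor expansion of $f$ around $x_0$. Writing $f(X+Y)=\sum_{j\geq 0}P_j(X)Y^j$ with $P_j\in\Z[X]$ --- the Hasse derivatives, which have \emph{integer} coefficients, so that no denominator appears --- and $P_0=f$, $P_1=f'$, the substitution $X=x_0$, $Y=p^k t$ yields
\begin{equation*}
f(x_0+p^k t)=f(x_0)+p^k t\,f'(x_0)+\sum_{j\geq 2}P_j(x_0)\,p^{jk}t^{j}.
\end{equation*}
Since $k\geq 1$, one has $jk\geq 2k\geq k+1$ for every $j\geq 2$, so the terms of order $j\geq 2$ vanish modulo $p^{k+1}$ and
\begin{equation*}
f(x_0+p^k t)\equiv f(x_0)+p^k t\,f'(x_0)\pmod{p^{k+1}}.
\end{equation*}
Using the hypothesis $p^k\mid f(x_0)$, I would then write $f(x_0)=p^k c$ with $c\in\Z$, so that the condition $f(x_0+p^k t)\equiv 0\bmod{p^{k+1}}$ becomes the linear congruence $c+t\,f'(x_0)\equiv 0\bmod{p}$.

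The two items now follow by elementary inspection of this linear congruence. If $p\nmid f'(x_0)$, it has the unique solution $t\equiv -c\,\overline{f'(x_0)}\bmod{p}$, giving exactly one lift of $x_0$, which is the first assertion. If $p\mid f'(x_0)$, the congruence collapses to $c\equiv 0\bmod{p}$, that is $p^{k+1}\mid f(x_0)$; under the additional hypothesis $f(x_0)\equiv 0\bmod{p^{k+1}}$ this holds, every $t\bmod{p}$ is a solution, and the $p$ resulting lifts are precisely $x_0+p^k j$ for $j\bmod{p}$, which is the second assertion. There is no genuine obstacle in this argument; the only point deserving care is the integrality of the Taylor coefficients $P_j(x_0)$ (equivalently $f^{(j)}(x_0)/j!\in\Z$), which is what allows one to discard the higher-order terms modulo $p^{k+1}$.
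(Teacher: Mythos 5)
Your proof is correct and complete: the reduction to counting $t\bmod p$ with $f(x_0+p^kt)\equiv 0\bmod{p^{k+1}}$, the Taylor expansion via Hasse derivatives (which settles the integrality issue and kills all terms of order $j\geq 2$ since $jk\geq k+1$), and the analysis of the resulting linear congruence $c+t\,f'(x_0)\equiv 0\bmod p$ together give exactly the two assertions. The paper itself omits the proof as standard, and your argument is precisely the canonical one it implicitly relies on, so there is nothing to add.
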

\begin{lemma}[Hensel's lemma in degree $2$]\label{lemma_Hensel_2}
Let $n\geqslant 1$ be an integer and $f(X)=X^2-sX+\pi$ be a polynomial of degree $2$ with integer coefficients satisfying $s\equiv\pi+1\bmod{p^n}$. Assume that $p^\ell\mid\mid\pi-1$ for some integer $\ell\geq 1$. The number of solutions of the congruence $f(x)\equiv 0\bmod{p^n}$ equals
\begin{equation*}
\begin{cases}
2p^\ell & \text{if $\;1\leq\ell\leq n/2-1$,} \\
p^{n/2} & \text{if $\;n/2\leq\ell\leq n$,}
\end{cases}
\end{equation*}
if $n$ is even, and
\begin{equation*}
\begin{cases}
2p^\ell & \text{if $\;1\leq\ell\leq (n-1)/2$,} \\
p^{(n-1)/2} & \text{if $\;(n-1)/2+1\leq\ell\leq n$,}
\end{cases}
\end{equation*}
if $n$ is odd.
\end{lemma}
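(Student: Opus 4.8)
The plan is to first simplify $f$ modulo $p^n$: since $s\equiv\pi+1\bmod{p^n}$, one has $f(X)\equiv(X-1)(X-\pi)\bmod{p^n}$, so the quantity we want is the number of $x\bmod{p^n}$ with $(x-1)(x-\pi)\equiv 0\bmod{p^n}$. Writing $x=1+w$ and $\delta=\pi-1$, this is the number of $w\bmod{p^n}$ with $w(w-\delta)\equiv 0\bmod{p^n}$, and the hypothesis $p^\ell\mid\mid\pi-1$ says $\nu_p(\delta)=\ell$. If $\ell\geq n$ then $\delta\equiv 0\bmod{p^n}$ and one is merely counting $w$ with $\nu_p(w)\geq\lceil n/2\rceil$, which gives $p^{\lfloor n/2\rfloor}$ and already agrees with the claim; so I assume $1\leq\ell\leq n-1$ from now on.

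The key point is that $\nu_p(w(w-\delta))=\nu_p(w)+\nu_p(w-\delta)$, combined with the ultrametric behaviour of $\nu_p(w-\delta)$ relative to $j:=\nu_p(w)$. If $j<\ell$ then $\nu_p(w-\delta)=j$, hence $\nu_p(w(w-\delta))=2j$; if $j>\ell$ then $\nu_p(w-\delta)=\ell$, hence $\nu_p(w(w-\delta))=j+\ell$; and if $j=\ell$, writing $w=p^\ell u$ and $\delta=p^\ell g$ with $p\nmid u$ and $p\nmid g$, one gets $\nu_p(w(w-\delta))=2\ell+\nu_p(u-g)$, where $\nu_p(u-g)$ may now be an arbitrary non-negative integer precisely because $p\nmid g$. (The residues $w\equiv 0\bmod{p^n}$, i.e.\ $x\equiv 1$, and $w\equiv\delta\bmod{p^n}$, i.e.\ $x\equiv\pi$, are of course always solutions, and are accounted for by these three cases.)

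From here I would partition the residues $w\bmod{p^n}$ by $j=\nu_p(w)$, recalling that there are $p^{n-j-1}(p-1)$ of them with $\nu_p(w)=j$ for $0\leq j\leq n-1$ and exactly one with $\nu_p(w)\geq n$; impose $\nu_p(w(w-\delta))\geq n$ in each of the three regimes above; and sum the resulting geometric series. When $2\ell<n$ the regime $j<\ell$ contributes nothing, the regime $j=\ell$ contributes the $p^\ell$ residues with $u\equiv g\bmod{p^{n-2\ell}}$, and the regime $j>\ell$ together with $w\equiv 0$ contributes $\sum_{j=n-\ell}^{n-1}p^{n-j-1}(p-1)+1=p^\ell$; altogether $2p^\ell$. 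When $2\ell\geq n$, the three geometric sums (over $\lceil n/2\rceil\leq j\leq\ell-1$, the single term $j=\ell$, and $\ell+1\leq j\leq n-1$) together with the solution $w\equiv 0$ telescope to $p^{\lfloor n/2\rfloor}$. Finally one observes that the threshold $2\ell<n$ versus $2\ell\geq n$ is exactly the dichotomy $\ell\leq n/2-1$ versus $\ell\geq n/2$ when $n$ is even and $\ell\leq(n-1)/2$ versus $\ell\geq(n-1)/2+1$ when $n$ is odd, and that $p^{\lfloor n/2\rfloor}$ equals $p^{n/2}$ for $n$ even and $p^{(n-1)/2}$ for $n$ odd, which is the assertion.

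Everything here is elementary; the only delicate part is the bookkeeping of the geometric sums at the extreme values of $\ell$ (such as $\ell=\lceil n/2\rceil$ or $\ell=n-1$), where some of the three ranges of $j$ degenerate to the empty set and one must take care not to count the solution $w\equiv 0$ twice, so the main obstacle is purely organisational. An alternative, essentially equivalent, route would be to lift solutions from modulus $p$ to modulus $p^n$ by repeated application of Lemma \ref{lemma_Hensel}, starting from the unique root $x\equiv 1\bmod p$ of $f$ and checking at each step whether $p\mid f'$, but this reproduces the same case distinction.
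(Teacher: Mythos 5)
Your argument is correct, and it is genuinely different from the one in the paper. You exploit the congruence $f(X)\equiv(X-1)(X-\pi)\bmod{p^n}$ to reduce the lemma to counting residues $w\bmod{p^n}$ with $\nu_p\bigl(w(w-\delta)\bigr)\geq n$, $\delta=\pi-1$, and then you stratify by $j=\nu_p(w)$ and sum geometric series; I have checked the three regimes ($j<\ell$, $j=\ell$, $j>\ell$ together with $w\equiv 0$) and the two thresholds $2\ell<n$ and $2\ell\geq n$, and the totals $2p^{\ell}$ and $p^{\lfloor n/2\rfloor}$ come out exactly as claimed, including the boundary cases where some ranges of $j$ are empty. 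The paper instead proves the lemma by induction on $n$, repeatedly applying Hensel's lemma (Lemma \ref{lemma_Hensel}) to describe explicitly, at each level, the full set of roots of $f$ modulo $p^{n}$ in terms of the digits $k_{n-m}$ and the exact power of $p$ dividing $\pi-1$; the count is then read off from that description. Your valuation-theoretic count is shorter and avoids tracking the explicit shape of the solutions, which the paper never uses elsewhere (only the cardinality enters Proposition \ref{propo_counting}, where incidentally the same factorization $f(X)\equiv(X-1)(X-\varpi)$ appears modulo $p^{n-k}$); the paper's induction, on the other hand, yields as a by-product an explicit parametrization of the solution set, at the cost of a more technical bookkeeping. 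The only thing to make sure of when writing your version out in full is the organisational point you already flag: the degenerate ranges at extreme $\ell$ and the single residue $w\equiv 0$, which must be counted once, in the regime $j>\ell$.
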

\begin{remark}
This lemma is proved by a quite technical induction on $n\geq 1$ but understanding the set of solutions for $1\leq n\leq 3$ of $f(x)\equiv 0\bmod{p^n}$ gives an idea of how the induction works.
\end{remark}
\begin{proof}[\proofname{} of lemma \ref{lemma_Hensel_2}]%
In this proof, recall that $p\mid\pi-1$.
\par
Obviously, $1$ is the only solution of the congruence $f(x)\equiv 0\bmod{p}$ with $s\equiv\pi+1\bmod{p}$.
\par
Let us quickly check what happens for $n=2$. One has $f(1)\equiv 0\bmod{p^2}$ and $f^\prime(1)=2-s\equiv 0\bmod{p}$. By Lemma \ref{lemma_Hensel}, the only solutions of $f(x)\equiv 0\bmod{p^2}$ with $s\equiv\pi+1\bmod{p^2}$ are $1+pk_1$ for $0\leq k_1<p$.
\par
Let us do the case $n=3$. For $0\leq k_1<p$, $1+pk_1$ is a solution of $f(x)\equiv 0\bmod{p^2}$ satisfying $f^\prime(1+pk_1)\equiv 0\bmod{p}$ and
\begin{equation*}
f\left(1+pk_1\right)\equiv pk_1(1-\pi+pk_1)\bmod{p^3}.
\end{equation*}
If $k_1=0$ then by Lemma \ref{lemma_Hensel}, $1+p^2k_2$ for $0\leq k_2<p$ are the only solutions of $f(x)\equiv 0\bmod{p^3}$ congruent to $1$ modulo $p^2$. Otherwise, $p\mid\mid\pi-1$ and $k_{1,\pi}$ must be the unique invertible integer modulo $p$ satisfying $pk_{1,\pi}\equiv\pi-1\bmod{p^2}$. Then, by Lemma \ref{lemma_Hensel} $1+pk_{1,\pi}+p^2k_2$ for $0\leq k_2<p$ are the solutions of $f(x)\equiv 0\bmod{p^3}$ congruent to $1+pk_{1,\pi}$ modulo $p^2$. We have just seen that the solutions of $f(x)\equiv 0\bmod{p^3}$ with $s\equiv\pi+1\bmod{p^3}$ are
\begin{itemize}
\item
$1+p^2k_2$ for $0\leq k_2<p$,
\item
$1+pk_{1,\pi}+p^2k_2$ for $0\leq k_2<p$ and if $p\mid\mid\pi-1$ and $pk_{1,\pi}\equiv\pi-1\bmod{p^2}$. 
\end{itemize}
\par
Note that the previous simple use of Hensel's lemma proves Lemma \ref{lemma_Hensel_2} for $1\leq n\leq 3$. We will conclude by induction on $n\geq 2$. 
\par
Let us set
\begin{equation*}
\left(\ell_1(n),\ell_2(n)\right)\coloneqq\begin{cases}
\left(n/2-1,n/2\right) & \text{if $n$ is even,} \\
\left((n-1)/2,(n-1)/2\right) & \text{if $n$ is odd.}
\end{cases}
\end{equation*}
Let us prove that for any $n\geq 2$, the solutions of the congruence $f(x)\equiv 0\bmod{p^n}$ for any polynomial $f(X)=X^2-sX+\pi$ of degree $2$ with integer coefficients satisfying $s\equiv\pi+1\bmod{p^n}$ are
\begin{equation*}
1+p^{n-1}k_{n-1}
\end{equation*}
where $0\leq k_{n-1}<p$ and for any $2\leq m\leq\ell_2(n)$,
\begin{equation*}
1+p^{n-m}k_{n-m}+\dots+p^{n-1}k_{n-1}
\end{equation*}
where $0<k_{n-m}<p$, $0\leq k_{n-m+1},\dots,k_{n-1}<p$ provided that $p^m\mid\pi-1$ and for any $2\leq m\leq\ell_1(n)$,
\begin{equation*}
1+p^mk_{m,\pi}+\dots+p^{n-m-1}k_{n-m-1,\pi}+p^{n-m}k_{n-m}+\dots+p^{n-1}k_{n-1}
\end{equation*}
where $0\leq k_{n-m},\dots,k_{n-1}<p$ provided that $p^m\mid\mid\pi-1$. Here, the numbers $k_{u,\pi}$, $m\leq u\leq n-m-1$,  are fixed integers modulo $p$ satisfying
\begin{equation*}
p^m k_{m,\pi}+\dots+p^{n-m-1}k_{n-m-1}\equiv\pi-1\bmod{p^{n-m}}.
\end{equation*}
In particular, $k_{m,\pi}$ is invertible modulo $p$. This fact trivially implies Lemma \ref{lemma_Hensel_2} for $n\geq 2$. The cases $n=1$, $n=2$ and $n=3$ have just been seen above.
\par
Let $n\geq 2$. Let us assume that the result holds at the rank $n$ and let us check that it remains true at the rank $n+1$. For instance, let us assume that $n$ is even. We do not provide the proof when $n$ is odd since this is completely similar.
\par
For $0\leq k_{n-1}<p$, $x_n\coloneqq 1+p^{n-1}k_{n-1}$ is a solution of $f(x)\equiv 0\bmod{p^n}$, which satisfies  $f^\prime(x_n)\equiv 0\bmod{p}$ and
\begin{equation*}
f(x_n)\equiv p^{n-1}k_{n-1}\left(1-\pi+p^{n-1}k_{n-1}\right)\bmod{p^{n+1}}.
\end{equation*}
By Lemma \ref{lemma_Hensel}, the only solutions of $f(x)\equiv 0\bmod{p^{n+1}}$ congruent to $x_n$ are
\begin{itemize}
\item
$1+p^nk_n$ for $0\leq k_n<p$ if $k_{n-1}=0$,
\item
$1+p^{n-1}k_{n-1}+p^nk_n$ for $0<k_{n-1}<p$, $0\leq k_n<p$ and if $p^2\mid\pi-1$.
\end{itemize}
\par
Let $2\leq m\leq\ell_2(n)=n/2$. Assume that $p^m\mid\pi-1$. For $0\leq k_{n-m+1},\dots,k_{n-1}<p$ and $0<k_{n-m}<p$, $x_{m,n}\coloneqq 1+p^{n-m}k_{n-m}+\dots+p^{n-1}k_{n-1}$ is a solution $f(x)\equiv 0\bmod{p^n}$, which satisfies  $f^\prime(x_{m,n})\equiv 0\bmod{p}$ and
\begin{multline*}
f(x_{m,n})\equiv p^{n}\left(k_{n-m}+\dots+p^{m-1}k_{n-1}\right)\left(\frac{1-\pi}{p^m}+p^{n-2m}k_{n-m}+\dots+p^{n-1-m}k_{n-1}\right) \\
\bmod{p^{n+1}}.
\end{multline*}
If $2\leq m<n/2$ then by Lemma \ref{lemma_Hensel}, the only solutions of $f(x)\equiv 0\bmod{p^{n+1}}$ congruent to $x_{m,n}$ are
\begin{equation*}
1+p^{n-m}k_{n-m}+\dots+p^{n-1}k_{n-1}+p^nk_n
\end{equation*}
where $0<k_{n-m}<p$, $0\leq k_{n-m+1},\dots,k_{n}<p$ provided that $p^{m+1}\mid\pi-1$. If $m=n/2$ then by Lemma \ref{lemma_Hensel}, the only solutions of $f(x)\equiv 0\bmod{p^{n+1}}$ congruent to $x_{n/2,n}$ are
\begin{equation*}
1+p^{n/2}k_{n/2,\pi}+p^{n/2+1}k_{n/2+1}+\dots+p^{n-1}k_{n-1}+p^nk_n
\end{equation*}
where $0\leq k_{n/2+1},\dots,k_{n}<p$ provided that $p^{n/2}\mid\mid\pi-1$ and $p^{n/2}k_{n/2,\pi}\equiv\pi-1\bmod{p^{n/2+1}}$.
\par
Let $1\leq m\leq\ell_1(n)=n/2-1$. Assume that $p^m\mid\mid\pi-1$. For $0\leq k_{n-m},\dots,k_{n-1}<p$, 
\begin{equation*}
x_{m,n}\coloneqq 1+p^mk_{m,\pi}+\dots+p^{n-m-1}k_{n-m-1,\pi}+p^{n-m}k_{n-m}+\dots+p^{n-1}k_{n-1}
\end{equation*}
is a solution $f(x)\equiv 0\bmod{p^n}$, which satisfies  $f^\prime(x_{m,n})\equiv 0\bmod{p}$ and
\begin{multline*}
f(x_{m,n})\equiv p^{n}\left(k_{m,\pi}+\dots+p^{n-2m-1}k_{n-m-1,\pi}+p^{n-2m}k_{n-m}+\dots+p^{n-1-m}k_{n-1}\right) \\
\left(\frac{1-\pi+p^{m}k_{m,\pi}+\dots+p^{n-1-m}k_{n-m-1,\pi}}{p^{n-m}}+k_{n-m}+pk_{n-m+1}+\dots+k_{n-1}p^{n-1-m}\right) \\
\bmod{p^{n+1}}.
\end{multline*}
By Lemma \ref{lemma_Hensel}, the only solutions of $f(x)\equiv 0\bmod{p^{n+1}}$ congruent to $x_{m,n}$ are
\begin{equation*}
1+p^mk_{m,\pi}+\dots+p^{n-m}k_{n-m,\pi}+p^{n-m+1}k_{n-m+1}+\dots+p^nk_n
\end{equation*}
where $0\leq k_{n-m+1},\dots,k_{n}<p$ and where
\begin{equation*}
p^m k_{m,\pi}+\dots+p^{n-m}k_{n-m,\pi}\equiv\pi-1\bmod{p^{n-m+1}}.
\end{equation*}
This completes the induction on $n$.
\end{proof}
\begin{proposition}[The counting ingredient]\label{propo_counting}
Let $n\geq 1$ be an integer and $I$ be a non-empty interval in $\left(\Z/p^n\Z\right)^\times$. The number of quadruples $(x_1,x_2,x_3,x_4)\in I^4$ satisfying
\begin{eqnarray*}
x_1+x_2 & \equiv & x_3+x_4\bmod{p^n}\\
\overline{x_1}+\overline{x_2} & \equiv & \overline{x_3}+\overline{x_4}\bmod{p^n}
\end{eqnarray*}
is bounded by an absolute positive constant times $n\abs{I}^2$.
\end{proposition}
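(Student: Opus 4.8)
The plan is to fix the pair $(x_1,x_2)\in I^2$, to estimate for how many $(x_3,x_4)\in I^2$ the two congruences hold, and then to sum over $(x_1,x_2)$. Write $L=\abs{I}$ and let $Q$ be the number of quadruples, so that $Q=\sum_{(x_1,x_2)\in I^2}N(x_1,x_2)$, where $N(x_1,x_2)$ counts the pairs $(x_3,x_4)\in I^2$ with $x_3+x_4\equiv x_1+x_2$ and $\overline{x_3}+\overline{x_4}\equiv\overline{x_1}+\overline{x_2}$ modulo $p^n$. The first step uses the identity $\overline u+\overline v=(u+v)\overline{uv}$, valid for units $u,v$. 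Put $s=x_1+x_2$, $\pi=x_1x_2$ and $a=\nu_p(s)\in\{0,\dots,n\}$. Since all the $x_i$ and their products are units, one checks that the two congruences defining $N(x_1,x_2)$ are equivalent to
\begin{equation*}
x_3+x_4\equiv s\bmod{p^n}\quad\text{and}\quad x_3x_4\equiv\pi\bmod{p^{n-a}}.
\end{equation*}
Hence $x_4\equiv s-x_3\bmod{p^n}$ is determined by $x_3$, and $x_3$ must be a root modulo $p^{n-a}$ of $f(X)=X^2-sX+\pi=(X-x_1)(X-x_2)$.

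The second step locates those roots and counts how many of their lifts lie in the interval $I$. Set $\ell=\nu_p(x_1-x_2)$; since $p$ is odd and $2x_1$ is a unit, $\min(a,\ell)=0$, and $\ell+a\leq n$ whenever $x_1\neq x_2$. Dividing $f$ by $x_1^2$ and substituting $X\mapsto x_1X$ puts it into the normalised form $X^2-s'X+\pi'$ with $s'\equiv\pi'+1\bmod{p^{n-a}}$ and $\nu_p(\pi'-1)=\ell$, so Lemma~\ref{lemma_Hensel_2} applies (the case $\ell=0$ being immediate); equivalently, from the factorisation, $X$ is a root modulo $p^{n-a}$ exactly when $\nu_p(X-x_1)+\nu_p(X-x_2)\geq n-a$. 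In either description the roots of $f$ modulo $p^{n-a}$ form two residue classes of modulus $p^{n-a-\ell}$ (the classes around $x_1$ and around $x_2$) when $\ell<(n-a)/2$, and a single residue class of modulus $p^{\lceil(n-a)/2\rceil}$ when $\ell\geq(n-a)/2$. Lifting these classes to $\Z/p^n\Z$ and intersecting with an interval of length $L$ yields, in all cases,
\begin{equation*}
N(x_1,x_2)\ll Lp^{\ell+a-n}+1\qquad(x_1\neq x_2),\qquad N(x_1,x_1)\ll Lp^{-1}+1.
\end{equation*}

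The last step is summation. The diagonal $x_1=x_2$ contributes $\ll L(Lp^{-1}+1)\ll L^2$. For the off‑diagonal part, since $\min(a,\ell)=0$ I would split into the case $a=0$ with $\ell$ arbitrary and the symmetric case $\ell=0$, $a\geq 1$. In the first, the number of pairs $(x_1,x_2)\in I^2$ with $\nu_p(x_1-x_2)=\ell$ is at most $L\lceil L/p^\ell\rceil\leq L(Lp^{-\ell}+1)$, whence
\begin{equation*}
\sum_{\substack{(x_1,x_2)\in I^2\\ \nu_p(x_1+x_2)=0}}N(x_1,x_2)\ll\sum_{\ell=0}^{n-1}L\bigl(Lp^{-\ell}+1\bigr)\bigl(Lp^{\ell-n}+1\bigr)\ll\sum_{\ell=0}^{n-1}\bigl(L^3p^{-n}+L^2p^{-\ell}+L^2p^{\ell-n}+L\bigr)\ll nL^2,
\end{equation*}
using $L\leq\varphi(p^n)<p^n$ to bound $L^3p^{-n}\leq L^2$ and the convergence of $\sum_\ell p^{-\ell}$ and $\sum_\ell p^{\ell-n}$; the case $\ell=0$, $a\geq 1$ is handled identically with the two valuations exchanged. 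Altogether $Q\ll nL^2=n\abs{I}^2$. The factor $n$ is genuinely necessary: it comes from the $n$ values of $\ell$, each contributing a term of size $\asymp L^2$ once $L$ is of order $\varphi(p^n)$, as one sees by taking $I=(\Z/p^n\Z)^\times$. The only delicate point is the second step — reading off from Lemma~\ref{lemma_Hensel_2} the \emph{position} of the roots, not merely their number, and checking that the estimate $N(x_1,x_2)\ll Lp^{\ell+a-n}+1$ is uniform across all regimes of $\ell$ and $a$; the factorisation $f(X)=(X-x_1)(X-x_2)$ makes this bookkeeping routine.
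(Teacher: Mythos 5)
Your argument is correct, and it takes a genuinely different (and in fact more self-contained) route than the paper. The paper fixes $x_4$, rescales by $\overline{x_4}$, and, for each value of $y_3=\overline{x_4}x_3$, must count the roots $y_1,y_2$ of a quadratic $X^2-sX+\varpi$ with $s\equiv\varpi+1$ whose roots are the \emph{unknowns}; this forces it to invoke Lemma \ref{lemma_Hensel_2}, whose proof is a fairly long Hensel induction, and the root count is then capped crudely by $\min\left(N_\ell(p^n),\abs{I}\right)$, with the stratification running over the valuations of $x_3\pm x_4$. You instead fix the pair $(x_1,x_2)$, so that your quadratic factors explicitly as $(X-x_1)(X-x_2)$ and the unknown is $x_3$: the root condition modulo $p^{n-a}$ is just $\nu_p(X-x_1)+\nu_p(X-x_2)\geq n-a$, from which you read off not only the number but the \emph{position} of the roots (two residue classes around $x_1$ and $x_2$ of modulus $p^{n-a-\ell}$, or one class of modulus $p^{\lceil (n-a)/2\rceil}$), and you then intersect these classes with $I$, stratifying over the valuations of $x_1\pm x_2$. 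This bypasses Lemma \ref{lemma_Hensel_2} entirely and yields the uniform per-pair bound $N(x_1,x_2)\ll \abs{I}p^{\ell+a-n}+1$, after which the summation using $\#\{(x_1,x_2):\nu_p(x_1\mp x_2)=\ell\}\ll\abs{I}\left(\abs{I}p^{-\ell}+1\right)$ and $\abs{I}\leq p^n$ gives $\ll n\abs{I}^2$ exactly as in the paper; the degenerate cases ($x_1=x_2$, and $a=n$ where the second congruence is vacuous) are handled consistently by your formula. What the paper's route buys is the explicit root-count statement of Lemma \ref{lemma_Hensel_2}; what yours buys is elementarity and sharper bookkeeping (location of roots rather than $\min$ of a global count with $\abs{I}$), and it would render Lemma \ref{lemma_Hensel_2} unnecessary for this proposition.
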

\begin{proof}[\proofname{} of proposition \ref{propo_counting}]%
Let us denote by $N_{k,\ell}(p^n;I)$ the number of these quadruples $(x_1,x_2,x_3,x_4)$ satisfying $p^k\mid\mid x_3+x_4$ and $p^\ell\mid\mid x_3-x_4$ for some fixed integers $k,\ell\in\{0,\dots,n\}$, which must satisfy $k\ell=0$ since $p$ is odd. Let $(x_1,x_2,x_3,x_4)$ be such a quadruple. Let us fix $x_4$. There are at most $\abs{I}$ such $x_4$. The bijective change of variables $y_i=\overline{x_4}x_i$ for $1\leq i\leq 3$ leads to the system
\begin{eqnarray*}
y_1+y_2 & \equiv & y_3+1\bmod{p^n} \\
\overline{y_1}+\overline{y_2} & \equiv & \overline{y_3}+1\bmod{p^n}
\end{eqnarray*}
where the triple $(y_1,y_2,y_3)$ belongs to $\left(\overline{x_4}I\right)^3$ and whose components satisfy $p^k\mid\mid y_3+1$ and $p^\ell\mid\mid y_3-1$. Let us set
\begin{equation*}
s=y_1+y_2\quad\text{ and }\quad\varpi=y_1 y_2.
\end{equation*}
The previous system becomes
\begin{equation}\label{eq_s}
s\equiv y_3+1\bmod{p^n}
\end{equation}
and
\begin{equation*}
\overline{\varpi}s\equiv\overline{y_3}+1\bmod{p^n}.
\end{equation*}
Thus,
\begin{equation*}
1\equiv y_3\overline{y_3}\equiv(s-1)(\overline{\varpi}s-1)\bmod{p^n}
\end{equation*}
so that $s(s-(\varpi+1))\equiv 0\bmod{p^n}$ and
\begin{equation}\label{eq_s_pi}
s\equiv\varpi+1\bmod{p^{n-k}}.
\end{equation}
Let $f(X)=X^2-sX+\varpi$. Obviously, $y_1$ and $y_2$ are solutions modulo $p^n$ of the congruence
\begin{equation}\label{eq_pol_congr_n}
f(x)\equiv 0\bmod{p^{n}}. 
\end{equation}
Note also that
\begin{equation}\label{eq_pol_congr_n-k}
f(X)\equiv(X-1)(X-\varpi)\bmod{p^{n-k}}
\end{equation}
by \eqref{eq_s_pi}. In particular, if $n-k\geq 1$ then the only solutions modulo $p$ of
\begin{equation*}
f(x)\equiv 0\bmod{p}
\end{equation*}
are $1$ and $\varpi$, which satisfy $f^\prime(\varpi)\equiv-f^\prime(1)=\varpi-1\mod{p}$ by \eqref{eq_s_pi}. Let us consider three distinct cases.
\par
\underline{First case}: $k=0$ and $0\leq\ell\leq n$. In this case, $p\nmid s$ by \eqref{eq_s} and $p^\ell\mid\mid\varpi-1$ since $y_3-1\equiv s-2\equiv\varpi-1\bmod{p^n}$ by \eqref{eq_s} and \eqref{eq_s_pi}. Let us fix $y_3$, which implies that $s$ is fixed by \eqref{eq_s} and $\varpi$ is fixed by \eqref{eq_s_pi}. There are $\ll 1+\abs{I}/p^\ell$ such $y_3$. By Lemma \ref{lemma_Hensel_2}, the number $N_\ell(p^n)$ of solutions modulo $p^n$ of the congruence \eqref{eq_pol_congr_n} satisfies
\begin{equation*}
N_\ell(p^n)\ll\begin{cases}
p^\ell & \text{if $n$ is even and $0\leq\ell\leq n/2-1$,} \\
p^{n/2} & \text{if $n$ is even and $n/2\leq\ell\leq n$,} \\
p^\ell & \text{if $n$ is odd and $0\leq\ell\leq (n-1)/2$,} \\
p^{(n-1)/2} & \text{if $n$ is odd and $(n-1)/2+1\leq\ell\leq n$.}
\end{cases}
\end{equation*}
In total,
\begin{equation*}
N_{0,\ell}(p^n;I)\ll\abs{I}\left(1+\frac{\abs{I}}{p^\ell}\right)\min{\left(N_\ell(p^n),\abs{I}\right)}.
\end{equation*}
\par
\underline{Second case}: $1\leq k\leq n-1$ and $\ell=0$. In this case, $p^k\mid\mid s$ by \eqref{eq_s} and $p\nmid\varpi-1$ by \eqref{eq_s} and \eqref{eq_s_pi}. Let us fix $y_3$, which implies that $s$ is fixed by \eqref{eq_s} and $\varpi$ is fixed modulo $p^{n-k}$ by \eqref{eq_s_pi}. There are $\ll 1+\abs{I}/p^k$ such $y_3$. By Lemma \ref{lemma_Hensel_2}, the congruence $f(x)\equiv 0\bmod{p^{n-k}}$ has exactly two solutions. Hence, the same holds for the number of pairs $(y_1,y_2)$ modulo $p^{n-k}$. In total,
\begin{equation*}
N_{k,0}(p^n;I)\ll\abs{I}\left(1+\frac{\abs{I}}{p^k}\right)\left(1+\frac{\abs{I}}{p^{n-k}}\right)\ll\abs{I}^2.
\end{equation*}
\par 
\underline{Third case}: $k=n$ and $\ell=0$. In this case $y_3\equiv-1\bmod{p^n}$ is fixed and given $y_1$, $y_2$ is fixed. In total,
\begin{equation*}
N_{n,0}(p^n;I)\ll\abs{I}^2.
\end{equation*}
\par
Altogether, the number of quadruples $(x_1,x_2,x_3,x_4)$ equals
\begin{equation*}
\sum_{\ell=0}^{n}N_{0,\ell}(p^n;I)+\sum_{k=1}^{n-1}N_{k,0}(p^n;I)+N_{n,0}(p^n;I)
\end{equation*}
and is bounded by $\ll n\abs{I}^2$.
\end{proof}
\subsection{The fourth moment of incomplete Kloosterman sums}%
\begin{proposition}[Bounding the fourth moment]\label{propo_4_moment}
Let $n\geqslant 2$ be an integer and $I$ be a non-empty interval in $\left(\Z/p^n\Z\right)^\times$. One has
\begin{equation*}
\mathsf{M}_4(I)\coloneqq\frac{1}{\varphi\left(p^n\right)^2}\sum_{(a,b)\in\left(\Z/p^n\Z\right)^\times\times\left(\Z/p^n\Z\right)^\times}\left\vert\frac{1}{p^{n/2}}\sum_{x\in I}e_{p^n}\left(ax+b\overline{x}\right)\right\vert^4\ll\frac{n\abs{I}^2}{\varphi\left(p^n\right)^2}.
\end{equation*}
\end{proposition}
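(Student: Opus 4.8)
The plan is to open the fourth power, carry out the sum over $(a,b)$ by orthogonality of additive characters, and feed the resulting lattice-point count into Proposition \ref{propo_counting}.

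First I would expand, for fixed $(a,b)$,
\[
\left\vert\frac{1}{p^{n/2}}\sum_{x\in I}e_{p^n}(ax+b\overline{x})\right\vert^4=\frac{1}{p^{2n}}\sum_{\uple{x}=(x_1,x_2,x_3,x_4)\in I^4}e_{p^n}\left(a\Delta_1(\uple{x})+b\Delta_2(\uple{x})\right),
\]
where $\Delta_1(\uple{x})\coloneqq x_1+x_2-x_3-x_4$ and $\Delta_2(\uple{x})\coloneqq\overline{x_1}+\overline{x_2}-\overline{x_3}-\overline{x_4}$, the inverses being taken modulo $p^n$. Summing over $a,b\in\left(\Z/p^n\Z\right)^\times$ and interchanging the order of summation, the inner sum factors as $c_{p^n}(\Delta_1(\uple{x}))c_{p^n}(\Delta_2(\uple{x}))$, where
\[
c_{p^n}(m)\coloneqq\sum_{a\in\left(\Z/p^n\Z\right)^\times}e_{p^n}(am)
\]
is the Ramanujan sum, so that
\[
\mathsf{M}_4(I)=\frac{1}{\varphi(p^n)^2p^{2n}}\sum_{\uple{x}\in I^4}c_{p^n}(\Delta_1(\uple{x}))c_{p^n}(\Delta_2(\uple{x})).
\]
The elementary evaluation $c_{p^n}(m)=p^n\delta_{p^n\mid m}-p^{n-1}\delta_{p^{n-1}\mid m}$ yields $\vert c_{p^n}(m)\vert\leq\varphi(p^n)$ for every $m$, with $c_{p^n}(m)=0$ unless $p^{n-1}\mid m$; by the triangle inequality one therefore obtains
\[
\mathsf{M}_4(I)\leq\frac{1}{p^{2n}}\,\#\left\{\uple{x}\in I^4:\ p^{n-1}\mid\Delta_1(\uple{x})\ \text{and}\ p^{n-1}\mid\Delta_2(\uple{x})\right\}.
\]

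Next I would identify this count with one covered by Proposition \ref{propo_counting}. Since $I$ is an interval of consecutive integers none of which is divisible by $p$, it has length $\vert I\vert\leq p-1<p^{n-1}$, so reduction modulo $p^{n-1}$ is injective on $I$ and sends $I$ bijectively onto an interval $\widetilde{I}$ in $\left(\Z/p^{n-1}\Z\right)^\times$ with $\vert\widetilde{I}\vert=\vert I\vert$. As $x\overline{x}\equiv 1\bmod{p^n}$ forces the class of $\overline{x}$ modulo $p^{n-1}$ to be the inverse of the class of $x$ modulo $p^{n-1}$, a quadruple $\uple{x}\in I^4$ satisfies the two congruences modulo $p^{n-1}$ above if and only if its reduction $\uple{y}=(y_1,y_2,y_3,y_4)\in\widetilde{I}^4$ satisfies $y_1+y_2\equiv y_3+y_4\bmod{p^{n-1}}$ and $\overline{y_1}+\overline{y_2}\equiv\overline{y_3}+\overline{y_4}\bmod{p^{n-1}}$. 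Applying Proposition \ref{propo_counting} with $n-1\geq 1$ in place of $n$ and $\widetilde{I}$ in place of $I$ bounds this count by an absolute constant times $(n-1)\vert I\vert^2\leq n\vert I\vert^2$, and since $\varphi(p^n)\leq p^n$ this gives $\mathsf{M}_4(I)\ll n\vert I\vert^2/\varphi(p^n)^2$, as desired.

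The argument is a short unwinding once Proposition \ref{propo_counting} is available, and the only delicate point is the descent from modulus $p^n$ to modulus $p^{n-1}$: here one uses crucially that an interval of units modulo $p^n$ has length at most $p-1$, so that reduction modulo $p^{n-1}$ is injective on it, and that passing to inverses commutes with this reduction. This is also why it is convenient to have Proposition \ref{propo_counting} at one's disposal for every $n\geq 1$ rather than only for the fixed $n$ of the statement.
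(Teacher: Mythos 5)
Your opening moves coincide with the paper's: expanding the fourth power and performing the $(a,b)$-sum via $\sum_{c\in(\Z/p^n\Z)^\times}e_{p^n}(cm)=p^n\delta_{p^n\mid m}-p^{n-1}\delta_{p^{n-1}\mid m}$. The proof then breaks in two places. First, you read ``interval in $(\Z/p^n\Z)^\times$'' as a block of consecutive integers all coprime to $p$, hence of length at most $p-1$. In this paper the phrase means an interval of residues \emph{intersected} with the units: in the proof of Proposition \ref{propo_tightne} the present proposition is applied to $I_{s,t}=\left(x_j(s),x_k(t)\right]\cap(\Z/p^n\Z)^\times$, whose cardinality can be of order $\varphi(p^n)\abs{t-s}$, far larger than $p$. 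So reduction modulo $p^{n-1}$ is not injective on $I$ and your descent to Proposition \ref{propo_counting} at modulus $p^{n-1}$ does not apply as stated; restricted to $\abs{I}\leq p-1$ the proposition would be useless for the tightness argument it is designed for.

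Second, and more seriously, the triangle-inequality step $\abs{c_{p^n}(m)}\leq\varphi(p^n)$ on the event $p^{n-1}\mid m$ is genuinely lossy: when $p^{n-1}\mid\mid m$ one has $c_{p^n}(m)=-p^{n-1}$, smaller by a factor $\asymp p$. With that discarded, your intermediate bound $\mathsf{M}_4(I)\leq p^{-2n}\,\#\{(x_1,x_2,x_3,x_4)\in I^4:\ p^{n-1}\mid\Delta_1,\ p^{n-1}\mid\Delta_2\}$ cannot yield the statement for long intervals: take $I=(\Z/p^n\Z)^\times$, so $\abs{I}\asymp p^n$; already the quadruples with $x_1\equiv x_3$ and $x_2\equiv x_4\bmod{p^{n-1}}$ number $\asymp\abs{I}^4/p^{2(n-1)}\asymp p^{2n+2}$, so your right-hand side is $\asymp p^2$, while the target $n\abs{I}^2/\varphi(p^n)^2$ is $\asymp n$ (and indeed $\mathsf{M}_4(I)\leq 16$ here by Weil's bound for the complete sums). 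The paper avoids this by keeping the exact evaluation of the character sums, which splits $\mathsf{M}_4(I)$ into four counts --- both congruences modulo $p^n$, the two mixed cases, and both modulo $p^{n-1}$ --- carrying weights $1$, $1/p$, $1/p$ and $1/p^2$ respectively; the extra powers of $p$ are precisely what compensates the larger number of solutions of the weaker congruences, and each count is then controlled through Proposition \ref{propo_counting}. To repair your argument you would have to retain this signed four-term decomposition (equivalently, use $\abs{c_{p^n}(m)}\leq p^{n-1}$ whenever $p^n\nmid m$) and then bound the counts involving the modulus $p^{n-1}$ for intervals possibly much longer than $p^{n-1}$, rather than collapse everything onto the single modulus $p^{n-1}$.
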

\begin{proof}[\proofname{} of proposition \ref{propo_4_moment}]%
Expanding the fourth power, one is led to
\begin{multline*}
\mathsf{M}_4(I)=\frac{1}{\varphi\left(p^n\right)^2p^{2n}}\sum_{(x_1,x_2,x_3,x_4)\in I^4}\left(\sum_{a\in\left(\Z/p^n\Z\right)^\times}e_{p^n}\left(a(x_1+x_2-x_3-x_4)\right)\right) \\
\left(\sum_{b\in\left(\Z/p^n\Z\right)^\times}e_{p^n}\left(b(\overline{x_1}+\overline{x_2}-\overline{x_3}-\overline{x_4})\right)\right).
\end{multline*}
The orthogonality of additive characters ensures that
\begin{equation*}
\sum_{c\in\left(\Z/p^n\Z\right)^\times}e_{p^n}\left(cz\right)=p^n\delta_{z\equiv 0\bmod{p^n}}-p^{n-1}\delta_{z\equiv 0\bmod{p^{n-1}}}
\end{equation*}
for any $z$ in $\Z/p^n\Z$. Thus,
\begin{multline*}
\mathsf{M}_4(I)=\frac{1}{\varphi\left(p^n\right)^2}\sum_{\substack{(x_1,x_2,x_3,x_4)\in I^4 \\
x_1+x_2\equiv x_3+x_4\bmod{p^n} \\
\overline{x_1}+\overline{x_2}\equiv\overline{x_3}+\overline{x_4}\bmod{p^n}}}1-\frac{1}{\varphi\left(p^n\right)^2p}\sum_{\substack{(x_1,x_2,x_3,x_4)\in I^4 \\
x_1+x_2\equiv x_3+x_4\bmod{p^n} \\
\overline{x_1}+\overline{x_2}\equiv\overline{x_3}+\overline{x_4}\bmod{p^{n-1}}}}1 \\
-\frac{1}{\varphi\left(p^n\right)^2p}\sum_{\substack{(x_1,x_2,x_3,x_4)\in I^4 \\
x_1+x_2\equiv x_3+x_4\bmod{p^{n-1}} \\
\overline{x_1}+\overline{x_2}\equiv\overline{x_3}+\overline{x_4}\bmod{p^{n}}}}1+\frac{1}{\varphi\left(p^n\right)^2p^2}\sum_{\substack{(x_1,x_2,x_3,x_4)\in I^4 \\
x_1+x_2\equiv x_3+x_4\bmod{p^{n-1}} \\
\overline{x_1}+\overline{x_2}\equiv\overline{x_3}+\overline{x_4}\bmod{p^{n-1}}}}1.
\end{multline*}
Proposition \ref{propo_counting} completes the proof.
\end{proof}
\subsection{The tightness condition via Kolmogorov's criterion}%
\begin{proposition}[Tightness]\label{propo_tightne}
Let $n\geq 2$ be an integer. The sequence of $C^0([0,1],\C)$-valued random variables $\mathsf{Kl}_{p^n}(\ast;(\ast,\ast))$ on the random space $\left(\Z/p^n\Z\right)^\times\times\left(\Z/p^n\Z\right)^\times$ where $p$ is an odd prime number is tight.
\end{proposition}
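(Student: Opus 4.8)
The plan is to invoke the Kolmogorov criterion for tightness of $C^0([0,1],\C)$-valued random variables (see appendix \ref{sec_proba}): the sequence $\mathsf{Kl}_{p^n}(\ast;(\ast,\ast))$ is tight as soon as the sequence of $\C$-valued random variables $\mathsf{Kl}_{p^n}(0;(\ast,\ast))$ is tight and there are constants $\alpha,\beta>0$ and $C\geq 0$ such that
\begin{equation*}
\mathbb{E}\left(\abs{\mathsf{Kl}_{p^n}(t;(\ast,\ast))-\mathsf{Kl}_{p^n}(s;(\ast,\ast))}^{\alpha}\right)\leq C\abs{t-s}^{1+\beta}
\end{equation*}
for every odd prime $p$ and all $s,t\in[0,1]$. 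The first requirement is immediate since $\abs{\mathsf{Kl}_{p^n}(0;(a,b))}=p^{-n/2}$ is bounded. For the second I would take $\alpha=4$ and $\beta=1$; the whole point is that the two preceding subsections have been tailored to yield, through Proposition \ref{propo_4_moment}, the uniform fourth-moment estimate
\begin{equation*}
\mathbb{E}\left(\abs{\mathsf{Kl}_{p^n}(t;(\ast,\ast))-\mathsf{Kl}_{p^n}(s;(\ast,\ast))}^{4}\right)\ll_n\abs{t-s}^{2}.
\end{equation*}

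To prove this estimate I would split according to the scale of $\abs{t-s}$. Write $N\coloneqq\varphi(p^n)$ and, for $s<t$, set $j_s\coloneqq\lceil(N-1)s\rceil\leq j_t\coloneqq\lceil(N-1)t\rceil$. In the \emph{macroscopic range} $t-s\geq 1/(N-1)$, the explicit parametrisation of Section \ref{sec_path} gives
\begin{equation*}
\mathsf{Kl}_{p^n}(t;(a,b))-\mathsf{Kl}_{p^n}(s;(a,b))=\bigl(z_{j_t}((a,b);p^n)-z_{j_s}((a,b);p^n)\bigr)+O(p^{-n/2})
\end{equation*}
uniformly in $(a,b)$ by \eqref{eq_tec}, where by \eqref{eq_etoile} the difference $z_{j_t}-z_{j_s}$ equals an incomplete Kloosterman sum $p^{-n/2}\sum_{x\in I}e_{p^n}(ax+b\overline{x})$ over an interval $I$ of $\left(\Z/p^n\Z\right)^\times$ with $\abs{I}=j_t-j_s\leq(N-1)(t-s)+1\leq 2(N-1)(t-s)$. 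Using the convexity bound $\abs{u+v}^4\ll\abs{u}^4+\abs{v}^4$, Proposition \ref{propo_4_moment}, and the elementary inequality $p^{-2n}\ll N^{-2}\leq(t-s)^2$ (valid because $t-s\geq 1/(N-1)$), one obtains
\begin{equation*}
\mathbb{E}\left(\abs{\mathsf{Kl}_{p^n}(t;(\ast,\ast))-\mathsf{Kl}_{p^n}(s;(\ast,\ast))}^{4}\right)\ll\mathsf{M}_4(I)+p^{-2n}\ll\frac{n\abs{I}^{2}}{N^{2}}+p^{-2n}\ll_n(t-s)^{2}.
\end{equation*}

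In the complementary \emph{microscopic range} $t-s<1/(N-1)$, the points $s$ and $t$ lie in at most two consecutive segments of the polygonal path $\gamma_{p^n}(a,b)$, on each of which $\mathsf{Kl}_{p^n}(\ast;(a,b))$ is affine with slope of modulus at most $(N-1)p^{-n/2}$ by \eqref{eq_tec_0}; hence $\abs{\mathsf{Kl}_{p^n}(t;(a,b))-\mathsf{Kl}_{p^n}(s;(a,b))}\leq(N-1)p^{-n/2}(t-s)$ pointwise in $(a,b)$, and its fourth power is $\ll p^{2n}(t-s)^{4}\ll(t-s)^{2}$, once again because $t-s<1/(N-1)$ forces $p^{2n}(t-s)^{2}\ll 1$; taking expectations here is trivial. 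Combining the two ranges yields the fourth-moment estimate for all $s,t\in[0,1]$ with an implied constant depending only on the fixed integer $n$, and the Kolmogorov criterion then gives the conclusion.

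The genuinely substantial work sits upstream rather than in the present proposition: the fourth-moment bound of Proposition \ref{propo_4_moment} rests on the counting Proposition \ref{propo_counting}, which itself relies on the degree-two Hensel analysis of Lemma \ref{lemma_Hensel_2}. Within the tightness argument proper, the only points requiring care are the bookkeeping that identifies a path increment with a genuine incomplete Kloosterman sum over an interval of $\left(\Z/p^n\Z\right)^\times$ --- in particular checking that the linear-interpolation corrections, of size $O(p^{-n/2})$, are negligible, which is exactly why the scale $\abs{t-s}\asymp p^{-n}$ must be isolated --- and keeping track that all implied constants depend only on $n$.
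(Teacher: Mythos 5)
Your proposal is correct and follows essentially the same route as the paper: Kolmogorov's criterion with exponent $4$, a split at the scale $\abs{t-s}\asymp 1/(\varphi(p^n)-1)$, the Lipschitz/slope bound \eqref{eq_tec_0} in the microscopic range, and the identification of the path increment with an incomplete Kloosterman sum fed into Proposition \ref{propo_4_moment} in the macroscopic range. The only cosmetic differences are that you pass from path values to partial sums via \eqref{eq_tec} rather than via \eqref{eq_bemol}, and that you additionally (and harmlessly) verify tightness of the values at $t=0$, which the criterion as stated in Proposition \ref{propo_kolmo_tight} does not require.
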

\begin{proof}[\proofname{} of proposition \ref{propo_tightne}]%
Let us show that if $0\leq s,t\leq 1$ then
\begin{equation}\label{eq_kolmo}
\frac{1}{\varphi\left(p^n\right)^2}\sum_{(a,b)\in\left(\Z/p^n\Z\right)^\times\times\left(\Z/p^n\Z\right)^\times}\left\vert\mathsf{Kl}_{p^n}(t;(a,b))-\mathsf{Kl}_{p^n}(s;(a,b))\right\vert^4\ll n\abs{t-s}^2
\end{equation}
where the implied constant is absolute. The bound \eqref{eq_kolmo} is enough by Proposition \ref{propo_kolmo_tight} to ensure the tightness of the sequence of $C^0([0,1],\C)$-valued random variables $\mathsf{Kl}_{p^n}(\ast;(\ast,\ast))$ as $p$ tends to infinity among the odd prime numbers. One can assume that $0\leq s<t\leq 1$.
\par
\underline{First range}: $0\leq t-s\leq1\left/\left(\varphi\left(p^n\right)-1\right)\right.$ so that
\begin{equation}\label{eq_tec_3}
p^n\leq\frac{4}{t-s}.
\end{equation}
Let us show that
\begin{equation*}
\left\vert\mathsf{Kl}_{p^n}(t;(a,b))-\mathsf{Kl}_{p^n}(s;(a,b))\right\vert\leq 2\sqrt{t-s},
\end{equation*}
which implies \eqref{eq_kolmo} in this range. Let us assume that
\begin{equation*}
\frac{j-1}{\varphi\left(p^n\right)-1}\leq t\leq\frac{j}{\varphi\left(p^n\right)-1}
\end{equation*}
where $1\leq j\leq\varphi\left(p^n\right)-1$. Two cases can occur.
\par
\noindent{\underline{First case}}:
\begin{equation*}
\frac{j-1}{\varphi\left(p^n\right)-1}\leq s<t\leq\frac{j}{\varphi\left(p^n\right)-1}.
\end{equation*}
In this case,
\begin{multline*}
\left\vert\mathsf{Kl}_{p^n}(t;(a,b))-\mathsf{Kl}_{p^n}(s;(a,b))\right\vert=\left\vert\alpha_j((a,b);p^n)\right\vert(t-s) \leq\frac{\varphi\left(p^n\right)-1}{p^{n/2}}(t-s) \\
\leq 2\sqrt{t-s}
\end{multline*}
by \eqref{eq_tec_0} and \eqref{eq_tec_3}.
\par
\noindent{\underline{Second case}}:
\begin{equation*}
\frac{j-2}{\varphi\left(p^n\right)-1}\leq s\leq\frac{j-1}{\varphi\left(p^n\right)-1}\leq t\leq\frac{j}{\varphi\left(p^n\right)-1}
\end{equation*}
where $2\leq j\leq\varphi\left(p^n\right)-1$. In this case,
\begin{multline*}
\left\vert\mathsf{Kl}_{p^n}(t;(a,b))-\mathsf{Kl}_{p^n}(s;(a,b))\right\vert\leq\left\vert\mathsf{Kl}_{p^n}(t;(a,b))-z_j((a,b);p^n)\right\vert \\
+\left\vert z_j((a,b);p^n)-\mathsf{Kl}_{p^n}(s;(a,b))\right\vert.
\end{multline*}
The first term is less than
\begin{equation*}
\left\vert\alpha_j((a,b);p^n)\right\vert\left(t-\frac{j-1}{\varphi\left(p^n\right)-1}\right)
\end{equation*}
whereas the second term is less than
\begin{equation*}
\left\vert\alpha_{j-1}((a,b);p^n)\right\vert\left(\frac{j-1}{\varphi\left(p^n\right)-1}-s\right).
\end{equation*}
Altogether,
\begin{equation*}
\left\vert\mathsf{Kl}_{p^n}(t;(a,b))-\mathsf{Kl}_{p^n}(s;(a,b))\right\vert\leq\frac{\varphi\left(p^n\right)-1}{p^{n/2}}(t-s)\leq 2\sqrt{t-s}
\end{equation*}
by \eqref{eq_tec_0} and \eqref{eq_tec_3}. 
\par
\underline{Second range}: $t-s\geq 1\left/\left(\varphi\left(p^n\right)-1\right)\right.$ so that
\begin{equation}\label{eq_tec_4}
p^n\geq\frac{1}{t-s}.
\end{equation}
Let us assume that
\begin{equation*}
\frac{j-1}{\varphi\left(p^n\right)-1}<s\leq\frac{j}{\varphi\left(p^n\right)-1}\quad\text{ and }\quad\frac{k-1}{\varphi\left(p^n\right)-1}<t\leq\frac{k}{\varphi\left(p^n\right)-1}
\end{equation*}
where $1\leq j\leq k-1\leq \varphi\left(p^n\right)-2$. In other words,
\begin{equation*}
j=\left\lceil{(\varphi\left(p^n\right)-1)s}\right\rceil\quad\text{ and }\quad k=\left\lceil{(\varphi\left(p^n\right)-1)t}\right\rceil.
\end{equation*}
By \eqref{eq_bemol} and H\"{o}lder's inequality,
\begin{multline*}
\frac{1}{\varphi\left(p^n\right)^2}\sum_{(a,b)\in\left(\Z/p^n\Z\right)^\times\times\left(\Z/p^n\Z\right)^\times}\left\vert\mathsf{Kl}_{p^n}(t;(a,b))-\mathsf{Kl}_{p^n}(s;(a,b))\right\vert^4 \\
=\mathsf{M}_4(I_{s,t})+O\left(\frac{1}{p^{2n}}\right)=\mathsf{M}_4(I_{s,t})+O\left((t-s)^2\right)
\end{multline*}
where $I_{s,t}$ is the non-empty interval in $\left(\Z/p^n\Z\right)^\times$ given by
\begin{equation*}
\left(x_j(s)=\varphi\left(p^n\right)s+j-1\right.,\left.x_k(t)=\varphi\left(p^n\right)t+k-1\right]\cap\left(\Z/p^n\Z\right)^\times.
\end{equation*}
Its length satisfies
\begin{align*}
\abs{I_{s,t}} & =\left\lfloor{x_k(t)}\right\rfloor-\left\lceil{x_j(s)}\right\rceil \\
& \leq\varphi\left(p^n\right)(t-s)+\left\lceil{(\varphi\left(p^n\right)-1)t}\right\rceil-\left\lceil{(\varphi\left(p^n\right)-1)s}\right\rceil \\
& \leq 4(\varphi\left(p^n\right)-1)(t-s)+1 \\
& \leq 8(\varphi\left(p^n\right)-1)(t-s)
\end{align*}
since $(\varphi\left(p^n\right)-1)(t-s)\geq 1$. Proposition \ref{propo_4_moment} implies \eqref{eq_kolmo}.
\end{proof}
\section{Proof of Theorem \ref{theo_A} and Theorem \ref{theo_B}}\label{sec_final}%
Let us prove Theorem \ref{theo_A}. By Proposition \ref{propo_kahane}, the random variable $\mathsf{Kl}$ has moments to all orders. Thus, we are allowed to use the method of moments. Proposition \ref{propo_moment_tilde} leads to the result.
\par
Theorem \ref{theo_B} is implied by Theorem \ref{theo_prokho}, Theorem \ref{theo_A} and Proposition \ref{propo_tightne}.
\appendix
\section{Probabilistic tools}\label{sec_proba}%
This section contains some probabilistic results needed in this work. The main reference for both the statements and their proof is \cite{Ko}.
\par
Let us say a few words about random variables with values in the Banach space $C^0([0,1],\C)$ of $\C$-valued continuous function on $[0,1]$ endowed with the supremum norm. Confer \cite[Section B.9]{Ko} for more details. For each $n\geq 1$, let $X_n$ be a random variable on the random space $(\Omega_n,\mathcal{A}_n,\mathbb{P}_n)$ with values in $C^0([0,1],\C)$. Let $X$ be a $C^0([0,1],\C)$-valued random variable.
\par
The sequence $\left(X_n\right)_{n\geq 1}$ converges to $X$ in the sense of \emph{finite distributions} if for all integers $k\geq 1$ and all $k$-tuples $(t_1,\dots,t_k)$ with
\begin{equation*}
0\leq t_1<\dots<t_k\leq 1,
\end{equation*}
the sequence of $\C^k$-valued random vectors $(X_n(t_1),\dots,X_n(t_k))$ converge in law to the random vector $(X(t_1),\dots,X(t_k))$.
\par
The sequence $\left(X_n\right)_{n\geq 1}$ converges in \emph{law} to $X$ if for any $\C$-valued continuous and bounded map $\varphi$ on the Banach space $C^0([0,1],\C)$, the sequence of complex numbers $\left(\mathbb{E}\left(\varphi(X_n)\right)\right)_{n\geq 1}$ converges to $\mathbb{E}(\varphi(X))$.
\par
Each $X_n$ induces a probability measure $\mu_n$ on the Banach space by
\begin{equation*}
\forall A\subset C^0([0,1],\C),\quad\mu_n(A)=\mathbb{P}_n\left(X_n^{-1}(A)\right).
\end{equation*}
The sequence $\left(X_n\right)_{n\geq 1}$ is said to be \emph{tight} if for any $\epsilon>0$, there exists a compact subset $K$ of $C^0([0,1],\C)$ satisfying
\begin{equation*}
\forall n\geq 1,\quad\mu_n(K)\geq 1-\epsilon.
\end{equation*}
A practical criterion for tightness is due to Kolmogorov.
\begin{proposition}[Kolmogorov's criterion for tightness]\label{propo_kolmo_tight}
If there exists $\alpha>0$ and $\delta>0$ so that
\begin{equation*}
\forall(s,t)\in[0,1]^2,\quad\mathbb{E}\left(\left\vert X_n(s)-X_n(t)\right\vert^\alpha\right)\ll\abs{s-t}^{1+\delta}
\end{equation*}
then $\left(X_n\right)_{n\geq 1}$ is tight. 
\end{proposition}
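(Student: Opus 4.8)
The plan is to verify directly the definition of tightness recalled just above the statement: for every $\epsilon>0$ one must produce a compact subset $K$ of $C^0([0,1],\C)$ with $\mathbb{P}(X_n\in K)\geq 1-\epsilon$ for all $n$. The natural candidates are H\"{o}lder balls, which are compact by the Arzel\`a--Ascoli theorem, and the displayed moment bound is exactly the input needed to run a Kolmogorov--Chentsov dyadic chaining argument producing a control of the modulus of continuity of $X_n$ that is \emph{uniform} in $n$. Concretely, fix a real number $\gamma$ with $0<\gamma<\delta/\alpha$ (possible since $\alpha,\delta>0$), and for $R,M>0$ let $K_{M,R}$ be the set of $f\in C^0([0,1],\C)$ such that $\abs{f(0)}\leq M$ and $\abs{f(s)-f(t)}\leq R\abs{s-t}^\gamma$ for all $s,t\in[0,1]$. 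Each $K_{M,R}$ is closed, bounded and uniformly equicontinuous, hence compact. Writing $H_R$ for the set of functions satisfying the second condition alone, one has $\mathbb{P}(X_n\notin K_{M,R})\leq\mathbb{P}(\abs{X_n(0)}>M)+\mathbb{P}(X_n\notin H_R)$, so it suffices to bound both terms on the right uniformly in $n$ by choosing $M$ and $R$ appropriately.

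The core is the bound on $\mathbb{P}(X_n\notin H_R)$. For $m\geq 0$ and $0\leq k<2^m$ set $\Delta_{m,k}=X_n\!\left((k+1)2^{-m}\right)-X_n\!\left(k2^{-m}\right)$. By the hypothesis and Markov's inequality, for $\lambda>0$,
\[
\mathbb{P}\left(\abs{\Delta_{m,k}}>\lambda 2^{-\gamma m}\right)\leq\lambda^{-\alpha}2^{\gamma\alpha m}\,\mathbb{E}\abs{\Delta_{m,k}}^\alpha\ll\lambda^{-\alpha}2^{-(1+\delta-\gamma\alpha)m},
\]
with implied constant independent of $n$. Summing over the $2^m$ values of $k$ and then over $m\geq 0$, and using $\delta-\gamma\alpha>0$, a union bound gives that the probability that \emph{some} dyadic increment $\Delta_{m,k}$ exceeds $\lambda 2^{-\gamma m}$ is $\ll_\gamma\lambda^{-\alpha}$, uniformly in $n$. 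On the complementary event every level-$m$ dyadic increment is at most $\lambda 2^{-\gamma m}$; writing any two dyadic rationals $s<t$ as a telescoping sum of consecutive dyadic increments at the relevant scales and summing the resulting geometric series yields $\abs{X_n(s)-X_n(t)}\leq C_\gamma\lambda\abs{s-t}^\gamma$, first for dyadic $s,t$ and then for all $s,t\in[0,1]$ since $X_n$ is continuous, where $C_\gamma$ depends only on $\gamma$. Hence $\mathbb{P}\left(X_n\notin H_{C_\gamma\lambda}\right)\ll_\gamma\lambda^{-\alpha}$, uniformly in $n$.

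It remains to handle $\mathbb{P}(\abs{X_n(0)}>M)$. Here one small point must be addressed: the increment hypothesis alone does not control the marginal of $X_n$ at a point, so one uses in addition that the sequence $\left(X_n(0)\right)_n$ is tight in $\C$ --- which is automatic in every application in this paper; for instance in the situation of Proposition \ref{propo_tightne} one has $\abs{\mathsf{Kl}_{p^n}(0;(a,b))}\leq p^{-n/2}$, so $X_n(0)$ is even uniformly bounded. Granting this, given $\epsilon>0$ choose $M$ with $\sup_n\mathbb{P}(\abs{X_n(0)}>M)<\epsilon/2$, then $\lambda$ large enough that $C_\gamma'\lambda^{-\alpha}<\epsilon/2$ for the implied constant above, and set $K=K_{M,C_\gamma\lambda}$; then $\mathbb{P}(X_n\notin K)<\epsilon$ for all $n$, which is tightness. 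The only genuinely technical step is the chaining estimate --- converting the pointwise increment bound into a uniform bound on the whole modulus of continuity --- and the trade-off $\gamma<\delta/\alpha$ is precisely what makes the series $\sum_m 2^{-(\delta-\gamma\alpha)m}$ converge; everything else is bookkeeping.
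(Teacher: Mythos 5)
Your argument is correct, but note that the paper itself gives no proof of this proposition: it simply cites \cite[Proposition B.9.5]{Ko}. What you have written out is precisely the standard Kolmogorov--Chentsov dyadic chaining proof that lies behind that reference: Markov plus the moment hypothesis at dyadic scales, a union bound made summable by the choice $\gamma<\delta/\alpha$, telescoping along dyadic expansions to pass from dyadic increments to a uniform H\"{o}lder modulus, and Arzel\`a--Ascoli to get compactness of the sets $K_{M,R}$. So rather than taking a different route from the paper, you have supplied the proof the paper treats as a black box, and the details you give are sound. One point you raise deserves emphasis: as literally stated, the proposition is missing a hypothesis controlling the laws at a single time, e.g.\ tightness of the sequence $\left(X_n(0)\right)_{n\geq 1}$ in $\C$. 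Without it the conclusion fails (take $X_n$ to be the constant function equal to $n$: all increments vanish, yet the sequence is not tight), and the statement in \cite{Ko} indeed carries such a condition. Your observation that this is harmless in the paper's application is right: for Proposition \ref{propo_tightne} one has $\abs{\mathsf{Kl}_{p^n}(0;(a,b))}=p^{-n/2}\leq 1$ uniformly, so the extra hypothesis is trivially satisfied. In short, the proposal is a complete and correct proof of the (properly stated) criterion, filling in an argument the paper only references.
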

\begin{remark}
This is \cite[Proposition B.9.5 Page 82]{Ko}.
\end{remark}
Last but not least, the main tool of this work is Prokhorov's criterion for convergence in law in $C^0([0,1],\C)$.
\begin{theorem}[Prokhorov's criterion]\label{theo_prokho}
If $\left(X_n\right)_{n\geq 1}$ converges to $X$ in the sense of finite distributions and $\left(X_n\right)_{n\geq 1}$ is tight then $\left(X_n\right)_{n\geq 1}$ converges in law in the sense of $C^0([0,1],\C)$-valued random variables.
\end{theorem}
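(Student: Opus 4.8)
The plan is to run the classical subsequence argument, exploiting that $C^0([0,1],\C)$ is a complete separable metric space (so that both Prokhorov's relative compactness theorem and the uniqueness statement below are available). Denote by $\mu_n$ the law of $X_n$ on $C^0([0,1],\C)$ and by $\mu$ the law of $X$. First I would invoke Prokhorov's theorem: since $(X_n)_{n\geq 1}$ is tight, the family $(\mu_n)_{n\geq 1}$ is relatively compact for the topology of weak convergence of probability measures on $C^0([0,1],\C)$, so every subsequence of $(\mu_n)_{n\geq 1}$ admits a further subsequence $(\mu_{n_j})_{j\geq 1}$ converging weakly to some Borel probability measure $\nu$ on $C^0([0,1],\C)$.

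Next I would identify every such limit point $\nu$ with $\mu$. Fix a finite tuple $0\leq t_1<\dots<t_k\leq 1$ and consider the evaluation map
\[
\pi_{t_1,\dots,t_k}\colon C^0([0,1],\C)\to\C^k,\qquad f\mapsto(f(t_1),\dots,f(t_k)),
\]
which is continuous. Hence the weak convergence $\mu_{n_j}\to\nu$ pushes forward to the weak convergence $(\pi_{t_1,\dots,t_k})_\ast\mu_{n_j}\to(\pi_{t_1,\dots,t_k})_\ast\nu$ of measures on $\C^k$. On the other hand, convergence of $(X_n)_{n\geq 1}$ to $X$ in the sense of finite distributions says exactly that $(\pi_{t_1,\dots,t_k})_\ast\mu_{n}\to(\pi_{t_1,\dots,t_k})_\ast\mu$ weakly on $\C^k$. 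By uniqueness of weak limits on $\C^k$ we conclude $(\pi_{t_1,\dots,t_k})_\ast\nu=(\pi_{t_1,\dots,t_k})_\ast\mu$ for every such tuple; that is, $\nu$ and $\mu$ have the same finite-dimensional distributions.

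The key point is then that the finite-dimensional distributions determine a Borel probability measure on $C^0([0,1],\C)$ uniquely. The cylinder sets $\{f\in C^0([0,1],\C):(f(t_1),\dots,f(t_k))\in B\}$, with $k\geq 1$ and $B\subset\C^k$ Borel, form a $\pi$-system generating the Borel $\sigma$-algebra of $C^0([0,1],\C)$ — this uses separability of $C^0([0,1],\C)$, which ensures the Borel $\sigma$-algebra coincides with the one generated by the evaluation maps — so two probability measures agreeing on all of them coincide by Dynkin's $\pi$-$\lambda$ theorem. Hence $\nu=\mu$. Since every subsequence of $(\mu_n)_{n\geq 1}$ thus has a further subsequence converging weakly to the same limit $\mu$, the whole sequence converges weakly, i.e. $(X_n)_{n\geq 1}$ converges in law to $X$.

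The only genuinely topological inputs are Prokhorov's relative compactness theorem and the uniqueness statement of the previous paragraph; both are standard facts about the Polish space $C^0([0,1],\C)$ and are recorded with proofs in \cite[Section B.9]{Ko}, so no new idea is required — the main work of the paper lies in verifying the two hypotheses, namely convergence of finite distributions (Proposition \ref{propo_moment_tilde}, via the method of moments, giving Theorem \ref{theo_A}) and tightness (Proposition \ref{propo_tightne}).
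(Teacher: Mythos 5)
Your proof is correct and is the standard argument: the paper itself gives no proof of this statement, simply citing \cite[Theorem B.9.4]{Ko}, and your combination of Prokhorov's relative-compactness theorem, identification of subsequential limits through the continuous evaluation maps, the fact that cylinder sets form a generating $\pi$-system for the Borel $\sigma$-algebra of the separable space $C^0([0,1],\C)$, and the subsequence principle is exactly the textbook proof of that cited result. Nothing further is needed.
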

\begin{remark}
This is \cite[Theorem B.9.4 Page 82]{Ko}.
\end{remark}
\bibliographystyle{alpha}
\bibliography{biblio}

\begin{thebibliography}{BGGS13}

\bibitem[BG13]{MR3081779}
Jonathan~W. Bober and Leo Goldmakher.
\newblock The distribution of the maximum of character sums.
\newblock {\em Mathematika}, 59(2):427--442, 2013.

\bibitem[BGGS13]{BoGoGaSo}
J.W. Bober, L.~Goldmakher, A.~Granville, and K.~Soundararajan.
\newblock The frequency and the structure of large character sums.
\newblock 2013.
\newblock preprint.

\bibitem[GS07]{MR2276774}
Andrew Granville and K.~Soundararajan.
\newblock Large character sums: pretentious characters and the
  {P}\'olya-{V}inogradov theorem.
\newblock {\em J. Amer. Math. Soc.}, 20(2):357--384, 2007.

\bibitem[GVW16]{GoVeWa}
Ke~Gong, Willem Veys, and Daqing Wan.
\newblock Power moments of {K}loosterman sums.
\newblock {\em J. Number Theory}, 164:103--126, 2016.

\bibitem[IK04]{IwKo}
Henryk Iwaniec and Emmanuel Kowalski.
\newblock {\em Analytic number theory}, volume~53 of {\em American Mathematical
  Society Colloquium Publications}.
\newblock American Mathematical Society, Providence, RI, 2004.

\bibitem[Kah85]{MR0254888}
Jean-Pierre Kahane.
\newblock {\em Some random series of functions}, volume~5 of {\em Cambridge
  Studies in Advanced Mathematics}.
\newblock Cambridge University Press, Cambridge, second edition, 1985.

\bibitem[Kat80]{MR617009}
Nicholas~M. Katz.
\newblock {\em Sommes exponentielles}, volume~79 of {\em Ast\'erisque}.
\newblock Soci\'et\'e Math\'ematique de France, Paris, 1980.
\newblock Course taught at the University of Paris, Orsay, Fall 1979, With a
  preface by Luc Illusie, Notes written by G\'erard Laumon, With an English
  summary.

\bibitem[Kel10]{MR2646758}
Dubi Kelmer.
\newblock Distribution of twisted {K}loosterman sums modulo prime powers.
\newblock {\em Int. J. Number Theory}, 6(2):271--280, 2010.

\bibitem[Kob84]{MR754003}
Neal Koblitz.
\newblock {\em {$p$}-adic numbers, {$p$}-adic analysis, and zeta-functions},
  volume~58 of {\em Graduate Texts in Mathematics}.
\newblock Springer-Verlag, New York, second edition, 1984.

\bibitem[Kow16]{Ko}
E.~Kowalski.
\newblock Arithmetic randonn\'{e}e. {A}n introduction to probabilistic number
  theory.
\newblock 2016.
\newblock Available at
  \url{https://people.math.ethz.ch/~kowalski/probabilistic-number-theory.pdf}.

\bibitem[KR14]{MR3248485}
Emmanuel Kowalski and Guillaume Ricotta.
\newblock Fourier coefficients of {$GL(N)$} automorphic forms in arithmetic
  progressions.
\newblock {\em Geom. Funct. Anal.}, 24(4):1229--1297, 2014.

\bibitem[KS16]{KoSa}
Emmanuel Kowalski and William~F. Sawin.
\newblock Kloosterman paths and the shape of exponential sums.
\newblock {\em Compos. Math.}, 152(7):1489--1516, 2016.

\bibitem[Leh76]{MR0429787}
D.~H. Lehmer.
\newblock Incomplete {G}auss sums.
\newblock {\em Mathematika}, 23(2):125--135, 1976.

\bibitem[Lox83]{MR737174}
J.~H. Loxton.
\newblock The graphs of exponential sums.
\newblock {\em Mathematika}, 30(2):153--163 (1984), 1983.

\bibitem[Lox85]{MR817102}
J.~H. Loxton.
\newblock The distribution of exponential sums.
\newblock {\em Mathematika}, 32(1):16--25, 1985.

\bibitem[Wey21]{Wey}
H.~Weyl.
\newblock Zur {A}bsch\"atzung von $\zeta(1+ti)$.
\newblock {\em Math. Z.}, 10:88--101, 1921.

\end{thebibliography}
\end{document}